\documentclass[12pt]{article}

\usepackage{amsmath,amsthm,amsfonts,amssymb,amscd,cite}
\usepackage{graphicx}

\allowdisplaybreaks[4]

\newtheorem {lemma}{Lemma}
\newtheorem {theorem} {Theorem}

\newtheorem {Claim}{Claim}

\allowdisplaybreaks [4]
\usepackage{fullpage}
\usepackage{tikz}

\begin{document}

\title{Girth and Laplacian eigenvalue distribution}

\author{Leyou Xu\footnote{Email: leyouxu@m.scnu.edu.cn}, Bo Zhou\footnote{Email: zhoubo@m.scnu.edu.cn}\\
School of Mathematical Sciences, South China Normal University,\\ Guangzhou 510631, China}

\date{}
\maketitle

\begin{abstract}
Let $G$ be a connected graph of order $n$ with girth $g$. For $k=1,\dots,\min\{g-1, n-g\}$, let $n(G,k)$ be the number of Laplacian eigenvalues (counting multiplicities) of $G$ that fall inside the interval $[n-g-k+4,n]$.
We prove that if $g\ge 4$, then
\[
n(G,k)\le n-g.
\]
Those graphs achieving the bound for $k=1,2$ are determined. We also determine the graphs $G$ with $g=3$ such that $n(G,k)=n-1, n-2, n-3$.
\\ \\
{\bf Keywords:} Laplacian eigenvalue distribution,  girth, cycle\\ \\
{\bf AMSC:} 05C50, 15A18, 05C38
\end{abstract}

\section{Introduction}

Let $G$ be a graph on $n$ vertices with vertex set $V(G)$ and edge set $E(G)$.
Its Laplacian matrix is the $n\times n$ matrix $L(G)=D(G)-A(G)$, where $A(G)$ is the adjacency matrix of $G$ defined as $(a_{uv})_{u,v\in V(G)}$
with $a_{uv}=1$ if $uv\in E(G)$ and $a_{uv}=0$ otherwise, and $D(G)$ is the diagonal matrix of vertex degrees.
The eigenvalues of $L(G)$ are called the Laplacian eigenvalues of $G$.
The Laplacian eigenvalues of graphs
found applications  in graph theory and in combinatorial optimization \cite{Me,Mo,Zhang}.
Denote by $m_G(\mu)$ the multiplicity of $\mu$ as a Laplacian eigenvalue of a graph $G$. It is known that
$m_G(0)$ is equal to the number of connected components of $G$ \cite{AM}.
For a graph $G$ of order $n$ and an interval $I\subseteq [0,n]$, the number of Laplacian eigenvalues of $G$ in $I$ is denoted by $m_GI$. It is well known that $m_G[0,n]=n$ for a graph $G$ of order $n$ \cite{Me, Moh}.
However, how the Laplacian eigenvalues are distributed in $[0,n]$ is not well understood. For a graph of order $n$, there are connections between some graph parameters and the number of Laplacian eigenvalues in certain subintervals of $[0,n]$. These graph parameters include
the number of pendant vertices \cite{Far}, the number of quasi-pendant vertices \cite{Far,Me91},  the average degree \cite{BRT,JOT,Sin},
 the independence number \cite{AhMS,CMP,COP,GS,Me}, the matching number \cite{GT}, the edge covering number \cite{AAD,GWZF}, the domination number \cite{CJT,GXL,HJT,ZZD}, the chromatic number \cite{AhMS}, and the diameter \cite{AhMS,GM,GXL,XZ1,XZ,XZ3}.

Denote by $K_{p,q}$ the complete bipartite graph with partite sets of size $p$ and $q$. Let $C_n$ denote the cycle on $n$ vertices, $n\ge 3$.

The girth of a graph is the length of a shortest cycle. Recently, Zhen, Wong and Xu \cite{ZWX} established the following result relating girth to Laplacian eigenvalue distribution.

\begin{theorem} \cite{ZWX}
Let $G$ be a connected graph of order $n$ with girth $g=4,\dots, n-1$. Then
\[
m_G(n-g+3,n]\le n-g
\]
with equality if and only if $G\cong K_{2,3}$ or $U_n$, where $U_n$ is obtained  by adding an edge between a vertex on $C_{n-1}$ and a new vertex.
\end{theorem}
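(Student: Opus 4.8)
The plan is to convert the eigenvalue count into a single eigenvalue inequality and then localise everything on a shortest cycle. Write the Laplacian eigenvalues of $G$ as $\mu_1(G)\ge\cdots\ge\mu_n(G)=0$. Since $n-g+3>0$, the bound $m_G(n-g+3,n]\le n-g$ is equivalent to $\mu_{n-g+1}(G)\le n-g+3$, and equality in the theorem means exactly that $\mu_{n-g}(G)>n-g+3\ge\mu_{n-g+1}(G)$. Thus it suffices to bound the $(n-g+1)$-th largest Laplacian eigenvalue. A shortest cycle $C$ is always induced and has exactly $g$ vertices, so the natural test object for the min-max formula $\mu_{n-g+1}(G)=\min_{\dim U=g}\max_{0\ne x\in U}\frac{x^\top L(G)x}{x^\top x}$ is a $g$-dimensional subspace built from $C$.

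First I would carry out a harmonic-extension reduction. For $f\colon V(C)\to\mathbb{R}$ let $\hat f$ be the extension to $V(G)$ that is harmonic off $C$, i.e.\ minimises $\hat f^\top L(G)\hat f$ subject to $\hat f|_{V(C)}=f$; the map $f\mapsto\hat f$ is linear and injective, so its image $\mathcal H$ is a $g$-dimensional subspace and $\hat f^\top L(G)\hat f=f^\top S f$, where $S$ is the Schur complement of $L(G)$ onto $V(C)$ (the block to be eliminated is invertible because $G$ is connected). Since $\hat f$ agrees with $f$ on $C$ we have $\|\hat f\|^2\ge\|f\|^2$, so the Rayleigh quotient of $\hat f$ is at most $f^\top S f/\|f\|^2\le\mu_1(S)$, and feeding $\mathcal H$ into the min-max formula yields $\mu_{n-g+1}(G)\le\mu_1(S)$ in one stroke. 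Moreover $S$ is itself the Laplacian of a nonnegatively weighted graph on $V(C)$, and a short sign computation shows $S=L(C_g)+L(W')$, where the weighted graph $W'$ records the extra effective conductances between cycle vertices induced by paths through $V(G)\setminus V(C)$. For the two putative extremal graphs one computes $\mu_1(S)=n-g+3$ exactly ($W'=0$ for $U_n$, a single weighted edge for $K_{2,3}$), which confirms tightness.

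The crux, and the step I expect to be hardest, is the inequality $\mu_1(S)\le n-g+3$. This is where the girth is decisive: for $g\ge5$ no vertex outside $C$ can have two neighbours on $C$ (two such neighbours would close a cycle of length $2+d_C\le 2+\lfloor g/2\rfloor<g$), while for $g=4$ the only option is an antipodal pair. Hence the extra conductances in $W'$ are severely constrained and $S$ is only a weak perturbation of $L(C_g)$, whose spectral radius is at most $4\le n-g+3$. The delicate part is pinning the additive constant exactly, since the crude tools all lose at least one unit: iterated vertex-deletion interlacing gives only $\mu_{n-g+1}(G)\le\mu_1(C_g)+(n-g)\le n-g+4$, and neither Weyl's splitting $\mu_1(S)\le\mu_1(L(C_g))+\mu_1(L(W'))$ nor the weighted degree bound $\mu_1(S)\le\max_{v_iv_j\in E}\{d_w(v_i)+d_w(v_j)\}$ is tight already on $K_{2,3}$. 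I would therefore estimate $\mu_1(S)$ by choosing, for each $f$, an explicit non-harmonic extension whose energy I can compute (propagating the boundary values into the outside components along a spanning forest) and then counting the cross edges between regions rooted at distinct cycle vertices, using the girth-forced sparsity of $G-V(C)$.

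Finally, for the equality characterisation I would exploit the rigidity of the gap $\mu_{n-g}(G)>n-g+3\ge\mu_{n-g+1}(G)$. Producing as many as $n-g$ Laplacian eigenvalues above the threshold $n-g+3$, with only $n-g$ vertices outside a girth-$g$ cycle available to create the necessary dense spots, is extremely restrictive; I expect the analysis to force $n-g=1$, after which a direct inspection of the connected girth-$g$ graphs with a single external vertex — the vertex pendant to $C_{n-1}$, or joined to an antipodal pair of $C_4$ — isolates precisely $U_n$ and $K_{2,3}$.
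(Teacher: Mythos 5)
Your reduction of the statement to $\mu_{n-g+1}(G)\le n-g+3$ and the harmonic-extension bound $\mu_{n-g+1}(G)\le \mu_1(S)$, with $S$ the Schur complement of $L(G)$ onto $V(C)$, are correct, and this Kron-reduction framework is genuinely different from the paper's. But the proof has a real gap exactly where you place the crux: the inequality $\mu_1(S)\le n-g+3$ is never established. You note yourself that Weyl's splitting and the weighted-degree bound already fail on $K_{2,3}$, and the proposed substitute (a non-harmonic extension along a spanning forest plus a count of cross edges) is a plan with no energy estimate actually carried out. Worse, $\mu_1(S)\le n-g+3$ is strictly stronger than what the theorem needs, since $\mu_{n-g+1}(G)\le\mu_1(S)$, so you have replaced the statement by a harder one whose validity is not evident. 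The paper's proof avoids this by \emph{not} reducing to the cycle alone: it takes the principal submatrix $B$ of $L(G)$ indexed by $V(C)$ together with one or two outside vertices, writes $B=L(H)+M$ with $M$ diagonal and $\rho_1(M)\le n-g-2$, and then only needs $\mu_2(H)<4$ for the few graphs $H$ the girth allows ($U_{g+1}\cup K_1$, $U_{g+2}'$, $K_{2,3}\cup K_1$, etc.), which follows from edge-deletion interlacing down to paths and small explicit spectra. That enlargement of the test object is precisely the ``one unit'' you observed is lost by crude interlacing; your scheme has no analogue of it.

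The second gap is the equality characterisation. Saying you expect the analysis to force $n-g=1$ is not an argument: the hard half is to show that whenever $g\le n-2$ one has $\mu_{n-g}(G)\le n-g+3$ (the paper in fact proves $\mu_{n-g}(G)<n-g+2$ outside an explicit list of exceptional graphs), and your machinery only produces upper bounds for $\mu_{n-g+1}$, never for $\mu_{n-g}$; controlling $\mu_{n-g}$ again requires the augmented submatrices and an extended case analysis of how the outside vertices attach to $C$ (independent or not, one neighbour or an antipodal pair when $g=4$), which occupies most of Section~3 of the paper. A minor slip in the tightness check: for $U_n$ the Schur complement is exactly $L(C_{n-1})$, whose largest eigenvalue equals $4=n-g+3$ only when $n-1$ is even, so $\mu_1(S)=n-g+3$ does not hold ``exactly'' for all extremal graphs; this is harmless for the bound but another sign that equality cannot be read off from the reduced matrix alone, because equality in the theorem concerns $\mu_{n-g}(G)$, not $\mu_{n-g+1}(G)$.
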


Inspired by the above interesting work, we prove the following result. To state the result, we need the following notation.
Let $K_{2,3}^*,K_{2,3}^{**}$ and $K_{2,3}^{***}$ be the graphs given in Fig.~\ref{graphK}.
For $n\ge 6$, let $C_{n-2}=u_1\dots u_{n-2}u_1$ and let $Y_{n,i}$ be the graph obtained from $C_{n-2}$ and two vertices $w, z$ outside $C_{n-2}$ by adding edges $u_1w$ and $u_iz$, where $i=1,\dots,\lceil\frac{n-1}{2} \rceil$.
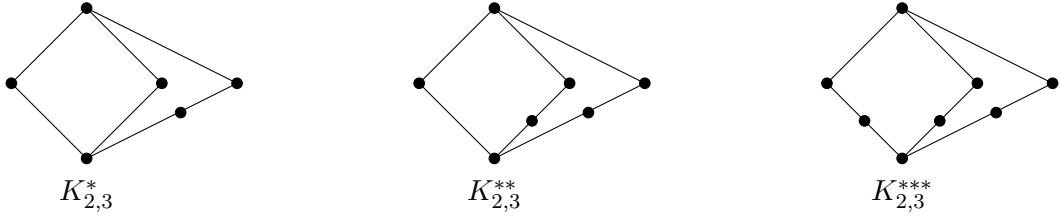
\begin{figure}[hb]
\centering
\begin{minipage}{.32\textwidth}
\centering
\begin{tikzpicture}
\filldraw [black] (1,0) circle (2pt);
\filldraw [black] (-1,0) circle (2pt);
\filldraw [black] (0,1) circle (2pt);
\filldraw [black] (0,-1) circle (2pt);
\filldraw [black] (2,0) circle (2pt);
\draw[black] (0,-1)--(1,0)--(0,1)--(-1,0)--(0,-1);
\draw[black] (0,1)--(2,0)--(0,-1);
\filldraw [black] (1.25,-0.39) circle (2pt);
\node at (0,-1.5) {\small $K_{2,3}^*$};
\end{tikzpicture}
\end{minipage}
\begin{minipage}{.32\textwidth}
\centering
\begin{tikzpicture}
\filldraw [black] (1,0) circle (2pt);
\filldraw [black] (-1,0) circle (2pt);
\filldraw [black] (0,1) circle (2pt);
\filldraw [black] (0,-1) circle (2pt);
\filldraw [black] (2,0) circle (2pt);
\draw[black] (0,-1)--(1,0)--(0,1)--(-1,0)--(0,-1);
\draw[black] (0,1)--(2,0)--(0,-1);
\filldraw [black] (1.25,-0.39) circle (2pt);
\filldraw [black] (0.5,-0.5) circle (2pt);
\node at (0,-1.5) {\small $K_{2,3}^{**}$};
\end{tikzpicture}
\end{minipage}
\begin{minipage}{.32\textwidth}
\centering
\begin{tikzpicture}
\filldraw [black] (1,0) circle (2pt);
\filldraw [black] (-1,0) circle (2pt);
\filldraw [black] (0,1) circle (2pt);
\filldraw [black] (0,-1) circle (2pt);
\filldraw [black] (2,0) circle (2pt);
\draw[black] (0,-1)--(1,0)--(0,1)--(-1,0)--(0,-1);
\draw[black] (0,1)--(2,0)--(0,-1);
\filldraw [black] (1.25,-0.39) circle (2pt);
\filldraw [black] (0.5,-0.5) circle (2pt);
\filldraw [black] (-0.5,-0.5) circle (2pt);
\node at (0,-1.5) {\small $K_{2,3}^{***}$};
\end{tikzpicture}
\end{minipage}
\caption{The graph $K_{2,3}^*$, $K_{2,3}^{**}$ and $K_{2,3}^{***}$.}
\label{graphK}
\end{figure}

\begin{theorem} \label{Gen}
Let $G$ be a connected graph of order $n$ with girth $g$. If $g\ge 4$ and $k=1,\dots,\min\{g-1, n-g\}$,  then
\[
m_G[n-g-k+4,n]\le n-g
\]
with equality when $k=1$ if and only if $g=n-1$ $($i.e., $G\cong K_{2,3}$ or $U_n$$)$, and
with equality when $k=2$ if and only if
\begin{equation} \label{xxxy}
G\cong \begin{cases}
K_{2,4}, K_{2,3}^* & \mbox{if $n=6$},\\
K_{2,3}^{**}, Y_{7,3} & \mbox{if $n=7$},\\
K_{2,3}^{***}, Y_{8,4} & \mbox{if $n=8$},\\
Y_{n,3},\dots, Y_{n,\lceil\frac{n-1}{2} \rceil} & \mbox{if $n\ge 9$}.
\end{cases}
\end{equation}
\end{theorem}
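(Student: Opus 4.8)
The plan is to translate the eigenvalue count into a single-eigenvalue statement, prove the inequality by variational/interlacing estimates keyed to the girth cycle, and then extract the equality cases by driving those estimates to their tight form.

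First I would order the Laplacian eigenvalues as $\mu_1(G)\ge\cdots\ge\mu_n(G)$ and set $c=n-g-k+4$. Since $m_G[c,n]\le n-g$ is equivalent to $\mu_{n-g+1}(G)<c$, and since $c$ is decreasing in $k$, it suffices to prove the bound for the largest admissible value $k=\min\{g-1,n-g\}$; every smaller $k$ then follows because its threshold is larger. This leaves two regimes: when $g\ge\frac{n+1}{2}$ one must show $\mu_{n-g+1}(G)<4$ (i.e.\ at least $g$ eigenvalues below $4$), and when $g\le\frac{n+1}{2}$ one must show $\mu_{n-g+1}(G)<n-2g+5$. In the latter regime the hypothesis $k\le g-1$, i.e.\ $c\ge n-2g+5$, is exactly what keeps the threshold above the bulk of the spectrum.

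To produce $g$ eigenvalues below the threshold I would work from the girth cycle $C_g=u_1\cdots u_g$, which is induced since $g$ is the girth. The first tool is Cauchy interlacing from a principal submatrix $L(G)[S]$ with $|S|=g$, giving $\mu_{n-g+1}(G)\le\lambda_{\max}(L(G)[S])$; for $S=V(C_g)$ one has $L(G)[S]=L(C_g)+\operatorname{diag}(\deg_G(u_i)-2)$, and since $\lambda_{\max}(L(C_g))\le 4$ this already settles the cases where the cycle vertices carry few external edges. A single submatrix is, however, genuinely insufficient in general: for $K_{3,3}$ every $4$-subset gives $\lambda_{\max}(L(G)[S])>4$, yet $\mu_3=3<4$. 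In such cases I would instead construct a $g$-dimensional test space directly from the structure — cycle ``flow'' vectors together with vectors balanced to zero on the independent neighbourhoods forced by triangle-freeness ($g\ge4$) — and bound its Rayleigh quotient. The most delicate point of the whole inequality is the boundary value $c=4$ (the case $k=n-g$), where one needs the \emph{strict} bound; I would secure this by arranging the relevant submatrix or test space to see only paths and odd cycles, whose Laplacian index is strictly below $4$.

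The equality analysis is where the main difficulty lies, and I expect it to be the principal obstacle. For $k=1$ the interval is $[n-g+3,n]$, and I would reduce equality to the theorem of Zhen–Wong–Xu: their open-interval bound gives $m_G(n-g+3,n]\le n-g$ with equality only when $g=n-1$ (i.e.\ $G\cong K_{2,3}$ or $U_n$), and one checks directly that $n-g+3$ is not a Laplacian eigenvalue of these graphs, so the closed count agrees. For $k=2$ one has $c=n-g+2$, and $m_G[c,n]=n-g$ forces $\mu_{n-g}(G)\ge n-g+2>\mu_{n-g+1}(G)$. Since $k=2$ is admissible only when $n-g\ge2$, any extremal graph has $g\le n-2$, so Zhen–Wong–Xu applies \emph{strictly}, $m_G(n-g+3,n]\le n-g-1$; comparing this with $m_G[n-g+2,n]=n-g$ localizes at least one eigenvalue in $[n-g+2,n-g+3]$. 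Tracing this forced eigenvalue back through the interlacing should pin the global structure and collapse it to two regimes: $g=n-2$, yielding the cycle-with-two-pendants graphs $Y_{n,i}$ with only the attachment distances $i=3,\dots,\lceil\frac{n-1}{2}\rceil$ surviving (here equality reads $\mu_2(Y_{n,i})\ge4>\mu_3(Y_{n,i})$), and $g=4$, yielding the $K_{2,3}$-type graphs $K_{2,4},K_{2,3}^{*},K_{2,3}^{**},K_{2,3}^{***}$.

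Finally I would verify this finite list directly. The orders $n=6,7,8$ must be handled as genuine exceptions — the graphs $K_{2,3}^{*},K_{2,3}^{**},K_{2,3}^{***}$ occur exactly there — by computing their spectra by hand, while for $n\ge9$ the task is to show that precisely the distances $3\le i\le\lceil\frac{n-1}{2}\rceil$ give $Y_{n,i}$ two Laplacian eigenvalues $\ge4$ and no other graph is extremal. The hard part throughout is this enumeration: ruling out all further candidates and constraining the two-pendant distance exactly as stated both rest on sharp control of the eigenvalue $\mu_{n-g+1}(G)$ just below the threshold, which is considerably more delicate than the inequality itself.
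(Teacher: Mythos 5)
Your reformulation ($m_G[c,n]\le n-g$ iff $\mu_{n-g+1}(G)<c$) and the reduction of the inequality to the largest admissible $k$ are fine, but the engine that actually proves the inequality is missing. Your only concrete device is the $g\times g$ principal submatrix on the girth cycle, which, as you admit, fails whenever cycle vertices carry many outside neighbours --- and that is the generic situation. The paper's key step is different: it takes the principal submatrix $B$ of $L(G)$ indexed by $V(C)\cup S$ with $S$ a set of $k$ outside vertices, writes $B=L(H)+M$ where $H=G[V(C)\cup S]$ and $M$ is the diagonal matrix of degrees towards the remaining $n-g-k$ vertices (so $\rho_1(M)\le n-g-k$), and then applies Weyl's inequality at index $k+1$: $\mu_{n-g+1}(G)\le\rho_{k+1}(B)\le\mu_{k+1}(H)+n-g-k$, reducing everything to the purely local statement $\mu_{k+1}(H)<4$, which is then settled by an extensive case analysis of the $(g+k)$-vertex graph $H$. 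Your proposed substitute --- a $g$-dimensional test space of ``cycle flow'' vectors plus vectors balanced on neighbourhoods --- is not specified, and your plan to obtain strictness at the threshold $4$ by ``seeing only paths and odd cycles'' cannot work as stated: when $g$ is even the girth cycle itself has Laplacian index exactly $4$, so any $g$-dimensional test space adapted to the cycle already attains Rayleigh quotient $4$; in the paper strictness comes from a counting argument (bounding $\mu_2$ or $\mu_{k+1}$ of subgraphs that do contain even cycles and bipartite pieces such as $K_{3,3}$, $K_{3,4}$, subdivided $K_{2,4}$), not from avoiding even cycles.

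The equality analysis also has concrete gaps. For $k=1$, closed-interval equality $m_G[n-g+3,n]=n-g$ does not reduce to the Zhen--Wong--Xu open-interval equality: a graph with $g\le n-2$ could have open count $n-g-1$ and $n-g+3$ itself as an eigenvalue; checking that $K_{2,3}$ and $U_n$ do not have $n-g+3$ as an eigenvalue handles the converse direction only. (This can be patched by first establishing the $k=2$ statement, which is exactly how the paper organizes Theorem~\ref{XuL}, but then you need that statement in full.) For $k=2$, ``tracing the forced eigenvalue back through the interlacing should pin the global structure'' is a hope, not an argument, and ``verify this finite list directly'' does not apply because the extremal family $Y_{n,3},\dots,Y_{n,\lceil\frac{n-1}{2}\rceil}$ is infinite. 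The real work here is (i) $\mu_2(Y_{n,i})\ge4$ for $i\ge3$ (the paper uses a second-largest-degree lemma plus small cases), (ii) $\mu_2(Y_{n,1}),\mu_2(Y_{n,2})<4$, where $Y_{n,1}$ requires a characteristic-polynomial factorization via the cut-edge recursion, and (iii) excluding every other graph with $g\le n-2$ through a multi-case interlacing analysis (Claim~\ref{c3} of the paper); none of this is addressed by your outline, so the characterization in \eqref{xxxy} remains unproved under your plan.
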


In the previous theorems, only graphs with girth $g\ge 4$ are considered. Next, we state the result for graphs with girth $g=3$.

Let $P_n$ and $K_n$ denote the  path and complete graph on $n$ vertices, respectively.

For vertex disjoint graphs $H$ and $F$, denote by $H\cup F$ the union of them and
 $H\vee F$ the join of them. For a graph $H$ and a positive integer $s$, $sH$ denotes the union of $s$ copies of $H$.

Given a graph $G$, if $E_1\subseteq E(G)$, we denote by $G-E_1$ the graph obtained from $G$ by removing all edges from $E_1$. If $E_1=\{e\}$, we also write $G-e$ for $G-\{e\}$.

For $n\ge 5$,  $H(n)$ denotes the graph obtained from $K_{n-1}$ by removing an edge $e$ and adding an edge between an end of $e$ and a newly added vertex, which is $G_{n,3,2}$ in \cite{XZ}.
For $1\le a\le \lfloor\frac{n-4}{2}\rfloor$,
$H(n,a)$ denotes the graph obtained from vertex disjoint union of a path $P_{4}:=v_1v_2v_3v_4$ and a complete graph $K_{n-4}$ by adding all edges connecting
vertices of $K_{n-4}$ and vertices from $\{v_2, v_3\}$,
$a$ vertices in $K_{n-4}$ and vertex $v_1$, and the remaining vertices in $K_{n-4}$ and  vertex $v_4$, which is $G_{n,3,2,a}$ in \cite{XZ}.

\begin{theorem} \label{Thr}
Let $G$ be a connected graph of order $n\ge 5$ with girth $g=3$.

(i) $m_G(n)=n-1$ if and only if $G\cong K_n$ and
$m_G(n)=n-2$  if and only if $G\cong K_n-e$, where $e$ is an edge of $K_n$.
If $G\not\cong K_n, K_n-e$, then
$m_G(n)\le n-3$ with equality if and only if
$G\cong K_{n-3}\vee 3K_1, K_{n-3}\vee (K_1\cup K_2), K_{n-4}\vee 2K_2$.

(ii) $m_G[n-1,n]=n-1$ if and only if $G\cong K_n$, $m_G[n-1,n]=n-2$ if and only if $G\cong K_n-\{vv_i:i=1,\dots,s\}$ for some $s=1,\dots,n-2$, where $v,v_1,\dots,v_s\in V(K_n)$.
 If $G\ncong K_n,K_n-\{vv_i:i=1,\dots,s\}$ for any $s=1,\dots,n-2$, where $v,v_1,\dots,v_s\in V(K_n)$, then $m_G[n-1,n]\le n-3$ with equality if and only if $G\cong H(n), H(n,a)$ for some $a=1\dots,\lfloor\frac{n-4}{2}\rfloor$, or $G$ is isomorphic to one of the following graphs:

(a) $K_n-\{vv_i:i=1,\dots,s\}-v_1v_2$ for some $s=1,\dots,n-2$, where $v,v_1,\dots,v_s\in V(K_n)$,

(b) $K_n-\{vv_i:i=1,\dots,s\}-\{uu_j:i=1,\dots,t\}$ for some $s=1,\dots,n-3$ and $t=1,\dots,\min\{s,n-2-s\}$, where $v,v_1,\dots,v_s,u,u_1,\dots,u_t\in V(K_n)$,

(c) $K_n-\{vv_i:i=1,\dots,s\}-\{v_1u_j:j=1,\dots,t\}$ for some $s=2,\dots,n-2$ and $t=1,\dots,\min\{s-1, n-1-s\}$, where $v,v_1,\dots,v_s,u_1,\dots,u_t\in V(K_n)$.
\end{theorem}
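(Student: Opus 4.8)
The plan is to pass everything to the complement. Writing $\overline{G}$ for the complement, the identity $L(G)+L(\overline{G})=nI-J$ gives $L(\overline{G})=nI-L(G)$ on $\mathbf{1}^{\perp}$, so via $\mu\mapsto n-\mu$ the Laplacian eigenvalues of $G$ in $[n-t,n]$ correspond to those of $\overline{G}$ in $[0,t]$, apart from one shared eigenvalue $0$. This yields the two working identities
\[
m_G(n)=c(\overline{G})-1,\qquad m_G[n-1,n]=m_{\overline{G}}[0,1]-1 ,
\]
where $c(\overline{G})$ is the number of components of $\overline{G}$; equivalently $m_G[n-1,n]=n-1-\big(\text{number of Laplacian eigenvalues of }\overline{G}\text{ exceeding }1\big)$. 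The right-hand sides concern the \emph{sparse} graph $\overline{G}$, which is what makes a classification feasible. Since all the $G$ in the conclusion are dense they automatically contain a triangle, so the hypothesis $g=3$ reduces to checking that the candidate $G$ is connected (and not already counted at a larger value of the invariant).

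For part (i) I would just count components. The conditions $m_G(n)=n-1,n-2,n-3$ become $c(\overline{G})=n,n-1,n-2$. A graph on $n$ vertices with $n$ components is empty ($G\cong K_n$); with $n-1$ components it has exactly one edge ($G\cong K_n-e$); and with $n-2$ components its non-isolated part carries exactly two edges in excess of a single tree, hence is $K_3$, $P_3$, or $2K_2$ together with isolated vertices. Taking complements of $K_3\cup(n-3)K_1$, $P_3\cup(n-3)K_1$, and $2K_2\cup(n-4)K_1$ produces the three stated graphs, and one checks each is connected and contains a triangle for $n\ge 5$; discarding the empty and single-edge cases (which give $K_n$ and $K_n-e$) leaves exactly the claimed list.

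Part (ii) is the substantial one, where I need the number of Laplacian eigenvalues of $\overline{G}$ exceeding $1$ to be $0,1,2$. Because a connected graph on at least two vertices has largest Laplacian eigenvalue $\ge\Delta+1\ge 2$ while isolated vertices contribute none, this count is additive over components and each nontrivial component contributes at least one. The value $0$ forces $\overline{G}$ empty, so $G\cong K_n$. For the value $1$, a trace estimate $2e(H)=\sum_i\mu_i\le m+(m-2)\cdot 1+0$ on a connected component $H$ on $m$ vertices forces $e(H)\le m-1$, hence $H$ is a tree with spectrum $0,1^{(m-2)},m$, i.e.\ a star; thus $\overline{G}$ is a star plus isolated vertices and $G\cong K_n-\{vv_i:i=1,\dots,s\}$, giving the $n-2$ case.

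The case of exactly two eigenvalues $>1$ is the heart of the argument and the main obstacle. Either one component carries both (the rest isolated) or two star-components carry one each. To handle the first alternative I would invoke Brouwer's inequality for $t=2$ (proved by Haemers, Mohammadian and Tayfeh-Rezaie), $\mu_1+\mu_2\le e(H)+3$; combined with $\sum_i\mu_i=2e(H)$ and $\mu_3,\mu_4,\dots\le 1$ this forces $e(H)\le m$, so the component is a tree or is unicyclic. A direct analysis then shows that among trees only the double stars (diameter $3$) have exactly two eigenvalues $>1$, and among unicyclic graphs only the triangles with all pendant edges attached at a single vertex (including the bare triangle). Complementing: two disjoint stars give family (b); non-spanning double stars give family (c); the spanning double star with a single leaf on one side is $H(n)$ and those with at least two leaves on each side are the $H(n,a)$; and triangle-with-pendants gives family (a). The remaining work, and the delicate part, is the bookkeeping: verifying connectivity and the presence of a triangle for each resulting $G$, matching the leaf counts to the stated ranges of $s,t,a$, and carefully removing the overlaps among these families and with the $m_G[n-1,n]=n-1,n-2$ cases so that the final list is exactly as claimed.
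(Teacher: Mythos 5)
Your part (i) is essentially the paper's own argument (complement plus component count), but your part (ii) takes a genuinely different route. The paper splits on the diameter of $G$: for $d\ge 3$ it quotes Theorem 1.1 of \cite{XZ1} (which is exactly where $H(n)$ and $H(n,a)$ enter, as the diameter-$3$ extremal graphs), for $d=2$ it quotes Theorem 5 of \cite{XZ} for the $m_G[n-1,n]=n-2$ characterization, and only then, inside the $d=2$ case, analyses via the complement which graphs of the form $K_n$ minus a star minus two further edges still satisfy $\mu_{n-3}(G)\ge n-1$. You instead work entirely in the complement through $m_G[n-1,n]=n-1-\#\{i:\mu_i(\overline{G})>1\}$ and classify the graphs whose complements have at most two Laplacian eigenvalues exceeding $1$; this is self-contained (no appeal to the diameter theorems) but imports Brouwer's conjecture for $t=2$, i.e.\ $\mu_1+\mu_2\le e(H)+3$ (Haemers--Mohammadian--Tayfeh-Rezaie), which the paper never needs. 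Your trace argument for the one-eigenvalue case (stars) is correct, and the $t=2$ bound does force $e(H)\le |V(H)|$ as you say. The two structural claims you leave as ``a direct analysis'' are true but are the real content and should be written out; both follow quickly from edge-deletion interlacing (Lemma \ref{cauchy}): a tree of diameter at least $4$ has a spanning forest with a $P_5$ component and $\mu_3(P_5)>1$, so only double stars survive among non-star trees; a unicyclic graph of girth at least $4$ dominates $C_g$ with $\mu_3(C_g)>1$, a triangle with a vertex at distance at least $2$ dominates $K_3\cup K_2$ with $\mu_3=2$, and pendants at two distinct triangle vertices dominate the $5$-vertex ``bull'' with $\mu_3>1$, leaving only triangles with all pendants at one vertex, whose spectrum $\{p+3,3,1^{[p]},0\}$ (and the double-star spectra) confirms exactly two eigenvalues above $1$. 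Your dictionary back to the theorem's list is right: two stars give (b), non-spanning double stars give (c), the spanning double stars give $H(n)$ (one leaf on a side) and $H(n,a)$ (at least two on each side), and triangle-with-pendants gives (a). So the proposal is sound and, once these two lemmas and the acknowledged bookkeeping (connectivity, ranges of $s,t,a$, removal of overlaps) are carried out, complete; what the paper's route buys is brevity by outsourcing the $d\ge 3$ and $d=2$, $n-2$ cases to its earlier results, while yours buys a uniform, self-contained treatment at the cost of an external theorem and a longer case analysis.
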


The rest of this paper is organized as follows. In the next section, we introduce some necessary notation and lemmas. In Sections 3 and 4, we prove Theorem \ref{Gen} for $k=1,2$ and $k\ge 3$, respectively. In Section 5, we prove Theorem \ref{Thr}. Finally, in Section 6, we conclude the paper with some remarks and possible directions for future research.

\section{Preliminaries}

In this section, we review some definitions and lemmas that we need. 

%

Let $G$ be a graph with $S\subset V(G)$ and $u\in V(G)$. We denote by $G[S]$ the subgraph of $G$ induced by $S$ and write $G-u=G[V(G)\setminus \{u\}]$ for the subgraph of $G$ obtained by deleting the vertex $u$ and all its incident edges.
%
If $E_1$ is a set of edges of the complement of $G$, then $G+E_1$ denote the graph obtained from $G$
by adding all the edges from $E_1$.
For $S_1,S_2\subseteq V(G)$, we denote by $E(S_1,S_2)$ the set of edges between vertices in $S_1$ and $S_2$ and write $E(S_1)$ for $E(G[S_1])$.

For $v\in V(G)$, denote by $N_G(v)$ the neighborhood of $v$ in $G$, and $d_G(v)$ denotes the degree of $v$ in $G$.

Fix a graph $G$. For $S\subseteq V(G)$, let $N_S(v)=N_G(v)\cap S$ and $d_S(v)=|N_S(v)|$. If $F$ is a subgraph of $G$, denote $N_{V(F)}(v)$ and $d_{V(F)}(v)$ by $N_F(v)$ and $d_F(v)$.

For a graph $G$ of order $n$, denote by $\mu_1(G), \dots, \mu_n(G)$ the Laplacian eigenvalues of $G$, arranged in nonincreasing order.

The following lemma is well known, see \cite[Corollary 2]{GM} and the subsequent comment on Page 224.

\begin{lemma}\label{d1}\cite{GM}
Let $G$ be a connected graph  on $n$ vertices with maximum degree $\Delta\ge 1$. Then $\mu_1(G)\ge \Delta+1$ with equality if and only if  $\Delta=n-1$.
\end{lemma}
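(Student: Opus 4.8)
The plan is to work directly from the Rayleigh quotient characterization of the largest Laplacian eigenvalue,
\[
\mu_1(G)=\max_{x\neq 0}\frac{x^{\top}L(G)x}{x^{\top}x},
\qquad x^{\top}L(G)x=\sum_{uv\in E(G)}(x_u-x_v)^2 .
\]
Fix a vertex $v$ with $d_G(v)=\Delta$ and set $N_G[v]=\{v\}\cup N_G(v)$. I would test this quotient on the vector $x$ defined by $x_v=\Delta$, $x_u=-1$ for every $u\in N_G(v)$, and $x_w=0$ for every remaining vertex $w$. A short computation gives $x^{\top}x=\Delta(\Delta+1)$ and $x^{\top}L(G)x=\Delta(\Delta+1)^2+e$, where $e$ is the number of edges of $G$ joining $N_G(v)$ to $V(G)\setminus N_G[v]$ (each edge $vu$ contributes $(\Delta+1)^2$, an edge inside $N_G(v)$ contributes $0$, and an edge from $N_G(v)$ to the outside contributes $1$). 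Hence
\[
\mu_1(G)\ge\frac{x^{\top}L(G)x}{x^{\top}x}=(\Delta+1)+\frac{e}{\Delta(\Delta+1)}\ge\Delta+1,
\]
which already yields the stated bound and shows the inequality is strict whenever $e\ge 1$.

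For the equality characterization I would treat the ``only if'' direction by contraposition. Suppose $\Delta\le n-2$. Then $|N_G[v]|=\Delta+1<n$, so $V(G)\setminus N_G[v]\neq\emptyset$, and since $G$ is connected there is an edge between $N_G[v]$ and its complement. Every neighbour of $v$ already lies in $N_G[v]$, so this edge cannot be incident with $v$; it therefore joins some $u\in N_G(v)$ to a vertex outside $N_G[v]$, giving $e\ge 1$ and, by the displayed estimate, $\mu_1(G)>\Delta+1$. Contrapositively, $\mu_1(G)=\Delta+1$ forces $\Delta=n-1$. For the ``if'' direction, if $\Delta=n-1$ then the lower bound gives $\mu_1(G)\ge n$, while $\mu_1(G)\le n$ holds for every graph of order $n$ (the already-quoted fact $m_G[0,n]=n$); hence $\mu_1(G)=n=\Delta+1$.

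The only point that is not a routine calculation is the combinatorial observation that connectedness together with $\Delta\le n-2$ forces an edge leaving $N_G[v]$ to emanate from $N_G(v)$ rather than from $v$ itself. Once this is secured, the single test vector simultaneously delivers the bound, its strictness, and therefore the full equality characterization, so I expect no serious obstacle beyond this elementary structural remark.
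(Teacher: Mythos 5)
Your argument is correct: the test vector $x$ with $x_v=\Delta$, $x_u=-1$ on $N_G(v)$ and $0$ elsewhere gives $x^{\top}x=\Delta(\Delta+1)$ and $x^{\top}L(G)x=\Delta(\Delta+1)^2+e$, so $\mu_1(G)\ge \Delta+1+\tfrac{e}{\Delta(\Delta+1)}$; your structural observation that connectedness together with $\Delta\le n-2$ forces an edge from $N_G(v)$ to $V(G)\setminus N_G[v]$ (it cannot leave from $v$, whose neighbours are all inside $N_G[v]$) correctly yields strictness, and the converse direction follows from the universal bound $\mu_1(G)\le n$ exactly as you say. Note, however, that the paper offers no proof of this lemma at all: it is quoted as a known result from Grone--Merris--Sunder \cite{GM} (Corollary 2 and the remark on p.~224). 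The classical route there is different in flavour: one observes that $G$ contains the star $K_{1,\Delta}$ as a subgraph and that $\mu_1$ is monotone under edge addition (since $L(G)$ is the sum of positive semidefinite edge Laplacians, cf.\ Lemma \ref{cauchy}), giving $\mu_1(G)\ge \mu_1(K_{1,\Delta})=\Delta+1$, with the equality analysis handled separately; in the literature one even has the stronger statement $\mu_1(G)\ge d_1+1$ with equality for connected $G$ only when $\Delta=n-1$. Your single Rayleigh-quotient computation is more self-contained and has the advantage of delivering the bound, its strictness, and hence the full equality characterization in one stroke, at the cost of an explicit (but elementary) edge-count; either approach is perfectly adequate here, since the paper only uses the inequality and the equality criterion as a black box.
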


For a bipartite graph $G$ with maximum degree $\Delta$, $\mu_1(G)\le 2\Delta$ with equality when $G$ is connected if and only if $G$ is regular \cite{AM}.
A bipartite graph is semi-regular if vertices from the same partite set have the same degree. We need the following lemma, for which an improvement was given in \cite{Zhang}.

\begin{lemma}\label{d1up}\cite[Theorem 2.9]{Pan}
Let $G$ be a connected graph with at least two edges. Let $r=\max\{d_G(u)+d_G(v): uv\in E(G)\}$. Suppose that $wz\in E(G)$ and $d_G(w)+d_G(z)=r$. Let $s=\max\{d_G(u)+d_G(v): uv\in E(G)\setminus\{wz\}\}$. Then
\[
\mu_1(G)\le 2+\sqrt{(r-2)(s-2)}
\]
with equality if and only if $G$ is a  semi-regular bipartite graph, or $G\cong P_4$.
\end{lemma}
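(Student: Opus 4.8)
The plan is to pass from the vertex eigenvector to an \emph{edge eigenvector} and exploit a local recurrence. Write $\mu=\mu_1(G)$ and let $x$ be a corresponding eigenvector, so that $(L(G)x)_u=d_G(u)x_u-\sum_{w\sim u}x_w=\mu x_u$ for every vertex $u$, equivalently $\mu x_u=\sum_{w\sim u}(x_u-x_w)$. For an edge $e=uv$ set $y_e=x_u-x_v$. Subtracting the identities at $u$ and at $v$ and isolating the contribution of $e$ itself gives
\[
(\mu-2)\,y_e=\sum_{w\sim u,\,w\ne v}(x_u-x_w)-\sum_{w\sim v,\,w\ne u}(x_v-x_w),
\]
where the right-hand side ranges over the $d_G(u)+d_G(v)-2$ edges adjacent to but distinct from $e$. (This is the statement that $(y_e)_e$ is an eigenvector of the edge matrix $N^{\mathsf T}N$ attached to $L(G)=NN^{\mathsf T}$ for the incidence matrix $N$, i.e.\ an eigenvalue of the signed line graph shifted by $2$; I will nonetheless argue directly with $x$.) Taking absolute values yields the basic inequality $(\mu-2)|y_e|\le\sum_{f\text{ adj }e}|y_f|$.

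The core estimate is a two-step maximum argument. Write $d_e=d_G(u)+d_G(v)$ for $e=uv$, let $e_1$ be an edge with $|y_{e_1}|$ maximum, and let $e_2$ maximize $|y_e|$ among edges other than $e_1$. Every edge adjacent to $e_1$ differs from $e_1$, so its $|y|$-value is at most $|y_{e_2}|$, and there are $d_{e_1}-2$ such edges; symmetrically the neighbours of $e_2$ have $|y|$-value at most $|y_{e_1}|$. Hence
\[
(\mu-2)|y_{e_1}|\le (d_{e_1}-2)\,|y_{e_2}|,\qquad (\mu-2)|y_{e_2}|\le (d_{e_2}-2)\,|y_{e_1}|.
\]
If $\mu\le 2$, or if $|y_{e_2}|=0$, one checks directly that $\mu\le 2<2+\sqrt{(r-2)(s-2)}$; here connectedness together with at least two edges forbids a $K_2$-component, so every edge has an endpoint of degree $\ge 2$, whence $d_e\ge 3$ and the radicand is positive. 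Otherwise multiply the two displayed inequalities and cancel $|y_{e_1}||y_{e_2}|>0$ to get $(\mu-2)^2\le(d_{e_1}-2)(d_{e_2}-2)$. Since $e_1\ne e_2$, at most one of them is $wz$; the other has degree-sum at most $s$, while both are at most $r$, and all factors are nonnegative, so $(d_{e_1}-2)(d_{e_2}-2)\le(r-2)(s-2)$ and the bound follows.

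For equality I would track when each step is tight. Equality forces: (i) $\{d_{e_1},d_{e_2}\}=\{r,s\}$, so both extremal edges realize extremal degree-sums; (ii) every edge adjacent to $e_1$ has $|y|=|y_{e_2}|$ and every edge adjacent to $e_2$ has $|y|=|y_{e_1}|$, giving a two-valued pattern of edge magnitudes; and (iii) tightness of the triangle inequality, meaning that at the endpoint $u$ of a maximal edge all differences $x_u-x_w$ ($w\sim u$) share one fixed sign, so $u$ is a strict local maximum of $x$ and its partner a strict local minimum. Propagating (ii)--(iii) through the connected graph forces $G$ to be bipartite with the $\pm$ sign pattern of the Perron vector of the signless Laplacian, and then (i) pins down the degrees: when $r=s$ this produces a semiregular bipartite graph, and the residual case $r>s$ collapses to $P_4$. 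The converse is a direct computation: for a connected semiregular bipartite graph with part-degrees $d_1,d_2$, the vector equal to $d_1$ on one side and $d_2$ on the other is a positive eigenvector of $Q=D(G)+A(G)$ for $d_1+d_2$, and since $L(G)$ and $Q$ are cospectral in the bipartite case one gets $\mu_1=d_1+d_2=2+\sqrt{(r-2)(s-2)}$ with $r=s=d_1+d_2$; for $P_4$ one computes $\mu_1=2+\sqrt2$ with $r=4$, $s=3$.

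The main obstacle is the equality characterization, and within it the appearance of $P_4$ as a non-semiregular exception. Conditions (ii)--(iii) seem to demand a two-valued eigenvector and hence semiregularity, but $P_4$ shows this intuition fails: its extremal eigenvector takes four distinct values and the middle edge strictly dominates, yet every inequality stays tight because $d_{e_1}=r=4$ and $d_{e_2}=s=3$ compensate. The delicate task is therefore to push the propagation of (ii)--(iii) far enough to conclude that the only way to sustain tightness without full semiregularity is the path $P_4$, while ruling out all other sporadic small graphs.
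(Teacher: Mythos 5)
Note first that the paper contains no proof of this lemma: it is quoted verbatim from \cite[Theorem 2.9]{Pan}, so the benchmark is Pan's original argument. Your proof of the inequality itself is correct and complete. The identity $(\mu-2)y_e=\sum_{w\sim u,\,w\ne v}(x_u-x_w)-\sum_{w\sim v,\,w\ne u}(x_v-x_w)$ is right, the right-hand side does consist of exactly $d_G(u)+d_G(v)-2$ terms $\pm y_f$ over edges $f$ adjacent to $e$, the two-maximum argument and the cancellation of $|y_{e_1}||y_{e_2}|>0$ are sound, the degenerate cases ($\mu\le 2$ or $y_{e_2}=0$) are handled correctly since connectedness with at least two edges forces $r,s\ge 3$, and the final step $(d_{e_1}-2)(d_{e_2}-2)\le (r-2)(s-2)$ correctly uses that at most one of $e_1,e_2$ can be $wz$. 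This is in substance the same mechanism as Pan's: your $y_e$ is an eigenvector of the edge matrix $2I+B$ coming from $L=NN^{\mathsf T}$ versus $N^{\mathsf T}N$, and the two-largest-entries argument is the standard proof of the two-largest-row-sum bound applied on the edge space; so for the bound itself you have not taken a genuinely different route, just a self-contained direct version of it.

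The genuine gap is the equality characterization, i.e.\ the ``only if'' direction of the last sentence of the lemma. Your sufficiency computations are fine (the vector equal to $d_1$ on one side and $d_2$ on the other is indeed a positive $Q$-eigenvector for $d_1+d_2$, and bipartite cospectrality of $L$ and $Q$ plus $\mu_1(P_4)=2+\sqrt{2}$, $r=4$, $s=3$ settle both families), and your tightness conditions (i)--(iii) are correctly extracted. But (ii)--(iii) constrain only the edges adjacent to $e_1$ or $e_2$; edges at distance two or more from both extremal edges are not touched by anything you have proved, and the assertion that the conditions ``propagate through the connected graph'' to force a semi-regular bipartite graph or $P_4$ is stated, not demonstrated --- you yourself flag the decisive step as a ``delicate task'' left open. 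Nothing in the proposal rules out, say, a tight configuration in which more distant edges carry other magnitudes of $y$, nor shows why non-bipartite sign patterns cannot sustain equality; closing this requires cascading the local recurrence at every edge realizing a maximum (equivalently, running the Perron--Frobenius equality analysis for $|B|$ against the adjacency matrix of the line graph, whose row sums are the $d_e-2$), which is precisely where the real work in Pan's proof lies. As written, the lemma's ``if and only if'' is therefore proved in only one direction.
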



For an $n\times n$ Hermitian matrix $M$, $\rho_i(M)$ denotes its $i$-th largest eigenvalue of $M$ and $\sigma(M)=\{\rho_i(M): i=1,\dots,n \}$ is the spectrum of $M$.
For convenience, if $\rho$ is an eigenvalue of $M$ with multiplicity $s\ge 2$, then we write it as $\rho^{[s]}$ in $\sigma(M)$. For a graph $G$, let $\sigma_L(G)=\sigma(L(G))$. 

\begin{lemma}\cite{BH} \label{sigma}
(i) $\mu_1(P_n)=2+2\cos \frac{\pi}{n}$,\\
(ii) $\sigma_L(C_n)=\{2-2\cos\frac{2j\pi}{n}:j=0,\dots,n-1\}$,\\
(iii) $\sigma(A(P_n))=\{2\cos\frac{j\pi}{n+1}:j=1,\dots,n\}$.
\end{lemma}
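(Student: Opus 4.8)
The plan is to treat each of the three spectra as an eigenvalue problem for a tridiagonal or circulant matrix and to guess the eigenvectors explicitly; completeness and the stated multiplicities then follow from a dimension count, since in each case I produce $n$ linearly independent vectors. I would begin with (iii), as the other two are variations on the same idea. The matrix $A(P_n)$ is the symmetric tridiagonal matrix with zero diagonal and ones on the two adjacent diagonals, so an eigenpair $(\lambda,x)$ with $x=(x_1,\dots,x_n)^{\top}$ satisfies $x_{i-1}+x_{i+1}=\lambda x_i$ for $1\le i\le n$ under the boundary convention $x_0=x_{n+1}=0$. Substituting the ansatz $x_i=\sin(i\theta)$ and using $\sin((i-1)\theta)+\sin((i+1)\theta)=2\cos\theta\,\sin(i\theta)$ forces $\lambda=2\cos\theta$ and makes $x_0=0$ automatic; the remaining condition $x_{n+1}=\sin((n+1)\theta)=0$ holds exactly when $\theta=\frac{j\pi}{n+1}$. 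Letting $j$ range over $1,\dots,n$ yields $n$ nonzero eigenvectors with the pairwise distinct eigenvalues $2\cos\frac{j\pi}{n+1}$ (cosine being strictly monotone on $(0,\pi)$), which therefore constitute all of $\sigma(A(P_n))$.

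For (ii) I would exploit that $C_n$ is $2$-regular, so $L(C_n)=2I-A(C_n)$, and that $A(C_n)$ is the circulant matrix generated by $(0,1,0,\dots,0,1)$. Every circulant is diagonalized by the discrete Fourier vectors $v_j=(1,\omega^{j},\omega^{2j},\dots,\omega^{(n-1)j})^{\top}$ with $\omega=e^{2\pi i/n}$, and the eigenvalue of $A(C_n)$ on $v_j$ is $\omega^{j}+\omega^{-j}=2\cos\frac{2\pi j}{n}$. Hence the Laplacian eigenvalues are $2-2\cos\frac{2\pi j}{n}$ for $j=0,\dots,n-1$, which is precisely (ii).

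Part (i) I would obtain by computing the full Laplacian spectrum of $P_n$ and reading off its maximum. Here $L(P_n)$ is tridiagonal with off-diagonal entries $-1$ and diagonal $(1,2,\dots,2,1)$, the two endpoint entries equal to $1$ because the ends have degree $1$; the interior equations are $-x_{i-1}+2x_i-x_{i+1}=\mu x_i$ and the endpoint equations are $x_1-x_2=\mu x_1$ and $x_n-x_{n-1}=\mu x_n$, which are discrete Neumann conditions. The correct ansatz is the half-shifted cosine $x_i=\cos\!\big((i-\tfrac12)\theta\big)$: it satisfies the recurrence with $\mu=2-2\cos\theta$ and automatically meets the endpoint equation at $i=1$, since $x_1-x_2=\cos\tfrac{\theta}{2}-\cos\tfrac{3\theta}{2}=4\sin^2\tfrac{\theta}{2}\cos\tfrac{\theta}{2}=(2-2\cos\theta)\cos\tfrac{\theta}{2}=\mu x_1$. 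Imposing the endpoint equation at $i=n$ and simplifying the resulting trigonometric identity collapses it to $\sin(n\theta)=0$, hence $\theta=\frac{k\pi}{n}$ for $k=0,\dots,n-1$ and $\mu=2-2\cos\frac{k\pi}{n}$. The largest value occurs at $k=n-1$, giving $\mu_1(P_n)=2-2\cos\frac{(n-1)\pi}{n}=2+2\cos\frac{\pi}{n}$, as claimed.

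The hard part, and really the only place requiring care, is matching the boundary conditions, which differ across the three problems: Dirichlet-type for the adjacency path forces the sine ansatz and spacing $\frac{j\pi}{n+1}$; periodic boundary for the cycle forces the Fourier vectors and spacing $\frac{2\pi j}{n}$; and Neumann-type for the path Laplacian—arising from the corner diagonal entries being $1$ rather than $2$—forces the half-integer shift in the cosine ansatz and the spacing $\frac{k\pi}{n}$. An unshifted cosine would fail the endpoint equations, so correctly pinning down this shift is exactly what separates formula (i) from (iii).
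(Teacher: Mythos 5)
Your proof is correct, and since the paper states this lemma as a known result cited from \cite{BH} without giving any proof, there is no in-paper argument to compare against; your explicit-eigenvector derivation (sine ansatz with Dirichlet-type conditions for $A(P_n)$, discrete Fourier vectors for the circulant $L(C_n)=2I-A(C_n)$, and the half-shifted cosine with Neumann-type endpoint equations for $L(P_n)$) is precisely the standard textbook computation underlying the cited reference. All three boundary analyses check out --- in particular the collapse of the endpoint condition at $i=n$ to $\sin(n\theta)=0$ via $2\sin\frac{\theta}{2}\cos\bigl((n-\tfrac12)\theta\bigr)=\sin(n\theta)-\sin((n-1)\theta)$, and the completeness argument from producing $n$ pairwise distinct eigenvalues (or $n$ independent Fourier vectors) for an $n\times n$ matrix.
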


\begin{lemma} \cite[Theorem 1.3]{So} \label{cw}
Let $A$ and $B$ be Hermitian matrices of order $n$.
For $1\le i,j\le n$ with $i+j-1\le n$,
\[
\rho_{i+j-1}(A+B)\le \rho_i(A)+\rho_j(B).
\]
\end{lemma}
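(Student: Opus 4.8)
The plan is to derive this Weyl-type inequality directly from the Courant–Fischer min-max characterization of the eigenvalues of a Hermitian matrix. Recall that for a Hermitian matrix $M$ of order $n$ and any $1\le k\le n$,
\[
\rho_k(M)=\min_{\dim S=n-k+1}\ \max_{0\ne x\in S}\frac{x^*Mx}{x^*x},
\]
where $S$ ranges over subspaces of $\mathbb{C}^n$ of dimension $n-k+1$. The idea is to exhibit one particular subspace $S$ of the correct dimension on which the Rayleigh quotient of $A+B$ is uniformly bounded above by $\rho_i(A)+\rho_j(B)$; feeding this $S$ into the min-max principle with $k=i+j-1$ then yields the claim.

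Concretely, I would fix orthonormal eigenbases $u_1,\dots,u_n$ of $A$ and $v_1,\dots,v_n$ of $B$, ordered so that $Au_\ell=\rho_\ell(A)u_\ell$ and $Bv_\ell=\rho_\ell(B)v_\ell$ for all $\ell$. Set $U=\mathrm{span}\{u_i,\dots,u_n\}$ and $W=\mathrm{span}\{v_j,\dots,v_n\}$, of dimensions $n-i+1$ and $n-j+1$ respectively. Since $u_i,\dots,u_n$ are precisely the eigenvectors of $A$ associated with its $n-i+1$ smallest eigenvalues, every nonzero $x\in U$ satisfies $x^*Ax\le\rho_i(A)\,x^*x$; in the same way $x^*Bx\le\rho_j(B)\,x^*x$ for every nonzero $x\in W$.

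The key step is a dimension count. Because $U$ and $W$ both lie in $\mathbb{C}^n$,
\[
\dim(U\cap W)\ge (n-i+1)+(n-j+1)-n=n-i-j+2=n-(i+j-1)+1,
\]
which is at least $1$ thanks to the hypothesis $i+j-1\le n$. For any nonzero $x\in U\cap W$ we then have
\[
x^*(A+B)x=x^*Ax+x^*Bx\le\bigl(\rho_i(A)+\rho_j(B)\bigr)x^*x.
\]
Choosing any subspace $S\subseteq U\cap W$ of dimension exactly $n-(i+j-1)+1$, this bound persists on $S$, and the min-max form applied with $k=i+j-1$ gives
\[
\rho_{i+j-1}(A+B)\le\max_{0\ne x\in S}\frac{x^*(A+B)x}{x^*x}\le\rho_i(A)+\rho_j(B),
\]
as required.

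The only point that demands care is the index and dimension bookkeeping: matching the subspace dimension $n-(i+j-1)+1$ required by the min-max principle with the guaranteed size of $U\cap W$. This is exactly where the constraint $i+j-1\le n$ enters, since it is what forces $n-i-j+2\ge 1$ and hence keeps the intersection nontrivial; beyond this accounting, no genuine obstacle arises.
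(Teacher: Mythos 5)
Your proof is correct: the dimension count $\dim(U\cap W)\ge n-(i+j-1)+1$ and the application of the Courant--Fischer min-max principle with $k=i+j-1$ are both carried out accurately, and the hypothesis $i+j-1\le n$ enters exactly where you say it does. The paper itself offers no proof of this lemma --- it is quoted as Weyl's inequality from \cite[Theorem 1.3]{So} --- and your argument is the standard textbook derivation of that result, so there is nothing to reconcile.
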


Let $G$ be a connected  graphs on $n$ vertices with $v\in V(G)$, where $n\ge 3$. Label the vertices of $G$ as $v_1,\dots, v_n$ so that $v_1=v$ and $\{v_2,\dots, v_s\}=N_G(v)$. Let $H=G-v_1$. Then
$L(G)=L(K_1\cup H)+L(K_{1,s}\cup (n-s-1)K_1)$. For $i=1,\dots, n-2$, Lemma \ref{cw} implies that
\[
\mu_{i+1}(G)\le \mu_i(H)+\mu_2(K_{1,s})\le \mu_i(H)+1.
\]

We also need the following two types of interlacing theorem or inclusion principle.

\begin{lemma}\label{interlacing}\cite[Theorem 4.3.28]{HJ}
If $M$ is  a Hermitian matrix of order $n$ and $B$ is its principal submatrix of order $p$, then $\rho_{n-p+i}(M)\le\rho_i(B)\le \rho_{i}(M)$ for $i=1,\dots,p$.
\end{lemma}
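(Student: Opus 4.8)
The plan is to derive this inclusion principle directly from the Courant--Fischer min--max characterization of the eigenvalues of a Hermitian matrix. First I would reduce to a convenient normal form: permuting the rows and columns of $M$ by one and the same permutation is a unitary (permutation) similarity, so it leaves the spectrum of $M$ unchanged while it can be arranged to send the given principal submatrix $B$ to the leading $p\times p$ block. Hence it suffices to treat the case where $B$ occupies the top-left corner of $M$. Let $W=\{x\in\mathbb{C}^n:x_{p+1}=\dots=x_n=0\}$, a $p$-dimensional subspace, and note that for every $x\in W$ one has $x^*Mx=y^*By$ and $x^*x=y^*y$, where $y\in\mathbb{C}^p$ collects the first $p$ coordinates of $x$. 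Thus the Rayleigh quotient $x^*Mx/x^*x$ of $M$, restricted to nonzero vectors of $W$, coincides with the Rayleigh quotient of $B$ on $\mathbb{C}^p$.

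For the upper bound $\rho_i(B)\le\rho_i(M)$ I would use the max--min form of Courant--Fischer,
\[
\rho_i(B)=\max_{\substack{S\subseteq W\\ \dim S=i}}\ \min_{0\neq x\in S}\frac{x^*Mx}{x^*x},
\]
where the outer maximum runs over the $i$-dimensional subspaces $S$ of $W$. Since these are among all $i$-dimensional subspaces of $\mathbb{C}^n$, enlarging the family over which one maximizes can only increase the value, which gives $\rho_i(B)\le\rho_i(M)$ immediately.

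For the lower bound $\rho_{n-p+i}(M)\le\rho_i(B)$ I would switch to the min--max form. Writing $\rho_j(M)=\min_{\dim T=n-j+1}\max_{0\neq x\in T} x^*Mx/x^*x$ and taking $j=n-p+i$ yields $n-j+1=p-i+1$, so
\[
\rho_{n-p+i}(M)=\min_{\substack{T\subseteq\mathbb{C}^n\\ \dim T=p-i+1}}\ \max_{0\neq x\in T}\frac{x^*Mx}{x^*x}.
\]
On the other hand the same formula for $B$ reads $\rho_i(B)=\min_{S\subseteq W,\ \dim S=p-i+1}\max_{0\neq x\in S} x^*Mx/x^*x$; since the $(p-i+1)$-dimensional subspaces of $W$ form a subfamily of those of $\mathbb{C}^n$, minimizing over the larger family produces the smaller value, whence $\rho_{n-p+i}(M)\le\rho_i(B)$. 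Combining the two estimates establishes the claim for every $i=1,\dots,p$.

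The only point requiring genuine care --- and the step I would watch most closely --- is the index bookkeeping in the min--max passage: one must correctly match the codimension $n-j+1$ of the competing subspaces against the dimension $p-i+1$ dictated by the order of $B$, and keep straight that restricting the admissible family of subspaces pushes a maximum down but a minimum up. Once the two Courant--Fischer formulas are oriented correctly, both inequalities follow from the single observation that $W$ supplies only a subfamily of the competing subspaces.
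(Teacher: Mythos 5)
Your argument is correct. Note that the paper does not prove this lemma at all: it is quoted as a known result (the inclusion principle, \cite[Theorem~4.3.28]{HJ}), so there is no internal proof to compare against. What you give is the standard self-contained Courant--Fischer proof of that textbook theorem, and the delicate index bookkeeping is handled properly: identifying the Rayleigh quotient of $B$ with that of $M$ restricted to the coordinate subspace $W$, using the max--min form over $i$-dimensional subspaces for $\rho_i(B)\le\rho_i(M)$, and the min--max form with $j=n-p+i$, hence subspaces of dimension $n-j+1=p-i+1$, for $\rho_{n-p+i}(M)\le\rho_i(B)$; in each case restricting the competing family of subspaces to those inside $W$ moves the max down and the min up, exactly as you say. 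The only cosmetic point is the reduction step: the simultaneous row--column permutation should be chosen to preserve the relative order of the indices of $B$ so that the leading block is literally $B$ (otherwise it is only permutation-similar to $B$, which of course has the same spectrum), but this does not affect the validity of the proof.
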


\begin{lemma}\label{cauchy} \cite[Theorem 3.2]{Moh}
If $G$ is a graph of order $n$ with $e\in E(G)$, then
\[
\mu_1(G)\ge\mu_1(G-e)\ge \mu_2(G)\ge\dots\ge \mu_{n-1}(G-e)\ge \mu_n(G)=\mu_{n}(G-e)=0.
\]
\end{lemma}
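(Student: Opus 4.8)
The whole plan is to pass to the complement $\overline{G}$, on which both quantities become transparent. Write $c(H)$ for the number of connected components of a graph $H$, $\nu(H)$ for its matching number, and $d_2(H)$ for its second largest vertex degree. Since $L(G)+L(\overline{G})=nI-J$ and the all-ones vector is a common eigenvector for the eigenvalue $0$, on its orthogonal complement the two Laplacians are simultaneously diagonalizable with eigenvalues paired as $\mu\leftrightarrow n-\mu$ \cite{BH}. This yields the two identities
\[
m_G(n)=c(\overline{G})-1,\qquad m_G[n-1,n]=m_{\overline{G}}[0,1]-1,
\]
because a nontrivial eigenvector with $L(G)$-eigenvalue $n$ (resp.\ in $[n-1,n]$) is exactly one with $L(\overline{G})$-eigenvalue $0$ (resp.\ in $[0,1]$), and the shared eigenvalue $0$ of $L(\overline{G})$ is discounted once. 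Both parts of the theorem are then read off from the structure of $\overline{G}$, recalling that $g=3$ just means $G$ contains a triangle.

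For part (i) I would classify $\overline{G}$ by its number of components. One has $c(\overline{G})=n$ iff $\overline{G}$ is edgeless iff $G\cong K_n$, and $c(\overline{G})=n-1$ iff $\overline{G}$ has exactly one edge iff $G\cong K_n-e$; these give $m_G(n)=n-1,n-2$. If $G\not\cong K_n,K_n-e$ then $c(\overline{G})\le n-2$, hence $m_G(n)\le n-3$, with equality iff $c(\overline{G})=n-2$. A partition of $n$ vertices into $n-2$ nonempty components forces the nontrivial part to be a single component of order $3$ (a $P_3$ or a $K_3$) or two components of order $2$ (two copies of $K_2$), all other vertices isolated. Using $\overline{H_1\cup H_2}=\overline{H_1}\vee\overline{H_2}$ together with $\overline{K_3}=3K_1$, $\overline{P_3}=K_1\cup K_2$ and $\overline{2K_2}=C_4$ yields precisely $K_{n-3}\vee 3K_1$, $K_{n-3}\vee(K_1\cup K_2)$ and $C_4\vee K_{n-4}$, each of which contains a triangle and so has girth $3$.

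For part (ii) the second identity turns the task into classifying $H:=\overline{G}$ by the number $n-m_H[0,1]$ of Laplacian eigenvalues exceeding $1$. By Lemma \ref{d1} every component with at least two vertices has $\mu_1\ge\Delta+1\ge 2>1$, so contributes at least one such eigenvalue; and since it is well known that $\mu_2(F)\ge d_2(F)$ \cite{BH}, a nontrivial component $F$ contributes exactly one eigenvalue above $1$ iff $\mu_2(F)\le 1$ iff $d_2(F)\le 1$ iff $F$ is a star. Hence $m_H[0,1]=n$ iff $H$ is edgeless ($G\cong K_n$), and $m_H[0,1]=n-1$ iff $H$ is one star plus isolated vertices ($G\cong K_n-\{vv_i\}_{i=1}^s$); this gives the values $n-1,n-2$ and, for every other $G$, the bound $m_G[n-1,n]\le n-3$. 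Equality means $H$ has exactly two eigenvalues above $1$, which occurs either when $H$ has two nontrivial components, both stars (whose complement is $K_n$ minus two vertex-disjoint stars, i.e.\ case (b)), or when $H$ has a single nontrivial component $F$ that is a non-star with $\mu_3(F)\le 1<\mu_2(F)$.

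The crux, and the step I expect to be hardest, is classifying the connected non-stars $F$ with $\mu_3(F)\le 1$. The key reduction is a matching bound: iterating Lemma \ref{cauchy} over edge deletions gives $\mu_i(F)\ge\mu_i(F')$ for every spanning subgraph $F'$, so a matching of size $3$ would force $\mu_3(F)\ge 2>1$ (the matching, as a spanning subgraph, is $3K_2$ with isolated vertices, whose third largest Laplacian eigenvalue is $2$); hence $\mu_3(F)\le 1$ implies $\nu(F)\le 2$. Then the ends of a maximum matching form a vertex cover of size at most four, so $F$ is an independent set joined to at most four ``cover'' vertices, a family small enough for a direct analysis. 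Using Lemma \ref{interlacing} to rule out induced configurations that already carry three eigenvalues above $1$, and computing $\mu_3$ on the few surviving shapes, I would show the admissible $F$ are exactly the double stars and the triangles with pendant edges all attached at one vertex (the latter including $K_3$). Taking complements, double stars with at least one isolated vertex give case (c); spanning double stars give $H(n)$ when one center carries a single leaf and $H(n,a)$ when both centers carry at least two leaves; and the triangle-with-pendants graphs give case (a), with $s=2$ recovering $K_{n-3}\vee 3K_1$. The remaining, most error-prone work is the bookkeeping of the parameter ranges (the bounds such as $t\le\min\{s,n-2-s\}$) so that $H(n),H(n,a)$ and (a)--(c) are pairwise distinct and cover each complement structure exactly once, together with the tight borderline computations where $\mu_3=1$.
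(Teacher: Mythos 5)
Your proposal does not prove the statement under review. The statement is Lemma~\ref{cauchy}, the classical edge--interlacing inequality
$\mu_1(G)\ge\mu_1(G-e)\ge\mu_2(G)\ge\dots\ge\mu_{n-1}(G-e)\ge\mu_n(G)=\mu_n(G-e)=0$,
which the paper does not prove at all but imports from \cite[Theorem 3.2]{Moh}. What you have written is instead a proof sketch of Theorem~\ref{Thr}, the girth-$3$ classification: the complement identities, the component count of $\overline{G}$, the graphs $H(n)$, $H(n,a)$ and the families (a)--(c) all belong to that theorem, not to the lemma. Worse, read as a proof of Lemma~\ref{cauchy} your text is circular: you explicitly invoke Lemma~\ref{cauchy} in the middle of the argument (``iterating Lemma~\ref{cauchy} over edge deletions gives $\mu_i(F)\ge\mu_i(F')$ for every spanning subgraph $F'$''), so the object to be established is assumed. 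Whatever the merits of the sketch as an outline of Theorem~\ref{Thr} --- and its strategy does parallel the paper's Section~5 proof via Lemma~\ref{comp}; indeed your part (i) list with $C_4\vee K_{n-4}=K_{2,2}\vee K_{n-4}$ is the correct complement of $(n-4)K_1\cup 2K_2$, whereas the paper's printed ``$K_{n-4}\vee 2K_2$'' is a slip --- it cannot be accepted as a proof of the stated lemma.

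For the record, the lemma has a short standard proof using only tools already in the paper. Write $L(G)=L(G-e)+L_e$, where $L_e=(\mathbf{e}_u-\mathbf{e}_v)(\mathbf{e}_u-\mathbf{e}_v)^{\top}$ for $e=uv$ is positive semidefinite of rank one, so $\rho_1(L_e)=2$ and $\rho_j(L_e)=0$ for $j\ge 2$. Applying Lemma~\ref{cw} with $A=L(G-e)$, $B=L_e$ and $j=2$ gives $\mu_{i+1}(G)\le\mu_i(G-e)+\rho_2(L_e)=\mu_i(G-e)$ for $i=1,\dots,n-1$; applying it with $A=L(G)$, $B=-L_e$ and $j=1$ gives $\mu_i(G-e)\le\mu_i(G)+\rho_1(-L_e)=\mu_i(G)$, since $\rho_1(-L_e)=0$. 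Interleaving these two chains yields exactly the displayed inequalities, and $\mu_n(G)=\mu_n(G-e)=0$ because every Laplacian is positive semidefinite and annihilates the all-ones vector. That two-line Weyl argument, not a structural classification of graphs, is what was being asked for here.
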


For a graph $G$ of order $n$ and positive integers $k$ and $s$ with $k,s\le n$ and $k-s\ge 1$, this lemma implies that if $E_1\subseteq E(G)$ with $|E_1|=s$, then $\mu_{k}(G)\le \mu_{k-s}(G-E_1)$.

The following lemma is a particular case of \cite[Theorem 3.5]{Das}.

\begin{lemma}\label{secondbound}\cite{Das}
Let $G$ be a connected graph on at least three vertices. Denote by $u$ a vertex of maximum degree and $v$ a vertex of second largest degree $d_2$ of $G$, respectively. If $uv\notin E(G)$ and $N_G(u)\cap N_G(v)=\emptyset$, then $\mu_2(G)\ge d_2+1$.
\end{lemma}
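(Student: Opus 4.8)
The plan is to prove the bound via the Courant--Fischer (min--max) variational characterization of the second largest eigenvalue of the positive semidefinite matrix $L(G)$, namely
\[
\mu_2(G)=\max_{\dim U=2}\ \min_{0\ne z\in U}\frac{z^{\top}L(G)z}{z^{\top}z},
\]
together with the Rayleigh identity $z^{\top}L(G)z=\sum_{ij\in E(G)}(z_i-z_j)^2$. It then suffices to exhibit a single two-dimensional subspace $U$ on which every Rayleigh quotient is at least $d_2+1$. I would build $U$ from two ``star test vectors'' localized at $u$ and at $v$. Write $d_1=d_G(u)$ and recall $d_2=d_G(v)$ with $d_1\ge d_2$. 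Define $x$ by $x_u=d_1$, $x_w=-1$ for $w\in N_G(u)$, and $x=0$ elsewhere; define $y$ analogously with $y_v=d_2$, $y_w=-1$ for $w\in N_G(v)$, and $0$ elsewhere. These are exactly the eigenvectors attaining the top Laplacian eigenvalue of the stars centered at $u$ and at $v$.

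The first key step is the observation that, because $uv\notin E(G)$ and $N_G(u)\cap N_G(v)=\emptyset$, the closed neighborhoods $\{u\}\cup N_G(u)$ and $\{v\}\cup N_G(v)$ are disjoint; hence $x$ and $y$ have disjoint supports, are linearly independent (both centers have degree $\ge 1$ since $G$ is connected of order $\ge 3$), and are orthogonal, so $U=\operatorname{span}\{x,y\}$ is genuinely two-dimensional and $z^{\top}z$ splits as $a^2\|x\|^2+b^2\|y\|^2$ for $z=ax+by$.

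The second step is to compute the three relevant quadratic quantities by a direct edge count, obtaining $x^{\top}L(G)x=d_1(d_1+1)^2+E_u$ and $y^{\top}L(G)y=d_2(d_2+1)^2+E_v$, where $E_u$ (resp. $E_v$) is the number of edges leaving the closed neighborhood of $u$ (resp. $v$); here $\|x\|^2=d_1(d_1+1)$ and $\|y\|^2=d_2(d_2+1)$, so the ``pure star'' part already yields quotient $d_1+1\ge d_2+1$ for $x$ and exactly $d_2+1$ for $y$. A short case analysis of the possible cross edges shows that the only nonzero contributions to $x^{\top}L(G)y$ come from edges between $N_G(u)$ and $N_G(v)$ (all other potential cross edges being excluded by the two hypotheses), each contributing $-1$, so $x^{\top}L(G)y=-m_c$, where $m_c$ is the number of such edges. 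Substituting $z=ax+by$ reduces the target inequality $z^{\top}L(G)z\ge(d_2+1)z^{\top}z$ to showing that the symmetric matrix
\[
\begin{pmatrix} (d_1-d_2)\,d_1(d_1+1)+E_u & -m_c\\[2pt] -m_c & E_v\end{pmatrix}
\]
is positive semidefinite.

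The main (and essentially only) obstacle is controlling this cross term, which may be strictly negative. The resolution is the crucial bookkeeping point that every edge counted in $m_c$ is an edge leaving the closed neighborhood of $u$ and simultaneously an edge leaving the closed neighborhood of $v$, whence $E_u\ge m_c$ and $E_v\ge m_c$. Consequently the diagonal entries are nonnegative and the determinant satisfies $\big((d_1-d_2)d_1(d_1+1)+E_u\big)E_v\ge E_uE_v\ge m_c^2$, giving positive semidefiniteness; the min--max principle then yields $\mu_2(G)\ge d_2+1$. I expect the only delicate part to be the case analysis verifying that no cross edges other than those between $N_G(u)$ and $N_G(v)$ occur, which is precisely where both hypotheses $uv\notin E(G)$ and $N_G(u)\cap N_G(v)=\emptyset$ enter.
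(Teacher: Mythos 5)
Your proof is correct: the Rayleigh-quotient computations check out (each $N_G(u)$--$N_G(v)$ edge indeed contributes $-1$ to $x^{\top}L(G)y$ and nothing else does, the reduction to positive semidefiniteness of the $2\times 2$ matrix is exact, and the bookkeeping $E_u\ge m_c$, $E_v\ge m_c$ is valid since every such cross edge leaves both closed neighborhoods). Note, however, that the paper offers no proof at all for this lemma: it is quoted as a particular case of Theorem 3.5 of \cite{Das}, so your argument is compared against the literature rather than against anything in the text. There is also a much shorter route available from the paper's own toolkit: the two hypotheses $uv\notin E(G)$ and $N_G(u)\cap N_G(v)=\emptyset$ say precisely that the stars centered at $u$ and $v$ are vertex-disjoint, so $K_{1,d_1}\cup K_{1,d_2}$ together with isolated vertices is a spanning subgraph of $G$; since $\sigma_L(K_{1,d})=\{d+1,1^{[d-1]},0\}$, repeated application of the edge-deletion interlacing in Lemma \ref{cauchy} gives $\mu_2(G)\ge\mu_2(K_{1,d_1}\cup K_{1,d_2})=d_2+1$ in two lines. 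Your variational proof is really the hands-on version of this: your test vectors $x,y$ are exactly the top Laplacian eigenvectors of the two disjoint stars, and the PSD verification redoes, by explicit edge counting, what monotonicity under edge deletion delivers for free. What your longer computation buys is quantitative slack --- it shows the Rayleigh quotients exceed $d_2+1$ by explicit amounts involving $E_u$, $E_v$ and $d_1-d_2$, which could be exploited to strengthen the bound when $G$ is far from the disjoint-stars configuration --- whereas the subgraph argument is shorter, uses only machinery the paper already has on hand, and generalizes immediately to lower bounds on $\mu_k$ from $k$ pairwise-distant vertices of large degree.
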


For $v\in V(G)$, let $L_v(G)$ be the principal submatrix of $L(G)$ formed by deleting the row and column corresponding to vertex $v$. 
For a square matrix $M$, the characteristic polynomial of $M$ is defined as $\Phi(M)=\det(xI-M)$.

\begin{lemma}\cite[Lemma 8]{Guo}\label{characteristic}
Let $G$ be a graph and  $v_1v_2$ a cut edge of $G$. Let $G-v_1v_2=G_1\cup G_2$ with $v_1\in V(G_1)$ and $v_2\in V(G_2)$. Then \[
\Phi(L(G))=\Phi(L(G_1))\Phi(L(G_2))-\Phi(L(G_1))\Phi(L_{v_2}(G_2))-\Phi(L_{v_1}(G_1))\Phi(L(G_2)),
\]
where $\Phi(L_{v_i}(G_i))=1$ if $G_i=K_1$ with $i=1,2$.
\end{lemma}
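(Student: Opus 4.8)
The plan is to recognize the cut edge $v_1v_2$ as a rank-one perturbation of the block-diagonal Laplacian of the disjoint union $G_1\cup G_2$, and then evaluate the characteristic polynomial via the matrix determinant lemma. Concretely, I would order the vertices so that those of $G_1$ precede those of $G_2$, and let $w$ be the column vector in $\mathbb{R}^n$ equal to $+1$ at $v_1$, $-1$ at $v_2$, and $0$ elsewhere. Since the only edge of $G$ absent from $G_1\cup G_2$ is $v_1v_2$, and the Laplacian contribution of a single edge $uv$ is $(e_u-e_v)(e_u-e_v)^\top$, we obtain
\[
L(G)=L(G_1\cup G_2)+ww^\top=\begin{pmatrix} L(G_1) & 0 \\ 0 & L(G_2) \end{pmatrix}+ww^\top .
\]
Writing $M=xI_n-L(G_1\cup G_2)$, this gives $xI_n-L(G)=M-ww^\top$, so $\Phi(L(G))=\det(M-ww^\top)$.

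Next I would apply the matrix determinant lemma $\det(M-ww^\top)=\det(M)\,(1-w^\top M^{-1}w)$, valid whenever $M$ is invertible. The first factor is immediate from the block structure: $\det(M)=\Phi(L(G_1))\Phi(L(G_2))$. For the quadratic form, $M^{-1}$ is block diagonal with blocks $(xI-L(G_1))^{-1}$ and $(xI-L(G_2))^{-1}$, and because $w$ places its $+1$ at $v_1$ in the first block and its $-1$ at $v_2$ in the second, the cross terms vanish and
\[
w^\top M^{-1}w=\big[(xI-L(G_1))^{-1}\big]_{v_1v_1}+\big[(xI-L(G_2))^{-1}\big]_{v_2v_2}.
\]
The crux is identifying each diagonal entry of the inverse. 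By Cramer's rule, $\big[(xI-L(G_1))^{-1}\big]_{v_1v_1}$ is the $(v_1,v_1)$ cofactor of $xI-L(G_1)$ divided by $\det(xI-L(G_1))$; since that cofactor carries sign $(-1)^{2v_1}=1$, it equals the minor obtained by deleting the row and column of $v_1$, namely $\det(xI-L_{v_1}(G_1))=\Phi(L_{v_1}(G_1))$. Hence $w^\top M^{-1}w=\Phi(L_{v_1}(G_1))/\Phi(L(G_1))+\Phi(L_{v_2}(G_2))/\Phi(L(G_2))$, and substituting back and clearing denominators yields exactly the stated three-term identity.

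The main obstacle is that the matrix determinant lemma needs $\det(M)\neq 0$, which fails precisely when $x$ is a Laplacian eigenvalue of $G_1$ or $G_2$. I would resolve this by a polynomial-identity argument: both sides of the claimed equation are polynomials in $x$, and the computation above shows they agree for all $x$ outside that finite set, hence identically. It then remains only to verify the degenerate convention: if $G_i=K_1$, then $L(G_i)$ is the $1\times 1$ zero matrix so $\Phi(L(G_i))=x$, while $L_{v_i}(G_i)$ is the empty $0\times 0$ matrix with characteristic polynomial $1$, matching the stipulation in the statement; a direct check (for instance on $G=K_2$) confirms the formula reduces correctly in this case.
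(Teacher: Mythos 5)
Your proof is correct, but note that the paper itself offers no proof to compare against: Lemma~\ref{characteristic} is imported verbatim from Guo \cite{Guo}, so you have supplied an argument where the authors supply only a citation. Your route---writing $L(G)=L(G_1\cup G_2)+ww^{\top}$ with $w=e_{v_1}-e_{v_2}$, applying the matrix determinant lemma $\det(M-ww^{\top})=\det(M)\,(1-w^{\top}M^{-1}w)$, and identifying the diagonal entries of $M^{-1}$ with $\Phi(L_{v_i}(G_i))/\Phi(L(G_i))$ via Cramer's rule---is sound at every step: the cross terms in $w^{\top}M^{-1}w$ do vanish by the block structure, the diagonal cofactor carries sign $+1$, and you correctly patch the invertibility requirement by observing that two polynomials agreeing off a finite set agree identically (this also licenses clearing the denominators $\Phi(L(G_i))$). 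The handling of the $G_i=K_1$ convention via the empty $0\times 0$ determinant, with the $K_2$ sanity check $x^2-2x=x(x-2)$, is also right. For contrast, the original proof in \cite{Guo} proceeds by multilinearity of the determinant: one expands $\det(xI-L(G))$ in the rows of $v_1$ and $v_2$, splitting off the degree increments and the off-diagonal $-1$ entries created by the cut edge, which yields the same three-term identity by direct cofactor bookkeeping. Your rank-one perturbation argument is arguably cleaner and generalizes immediately (e.g., to adding any single edge, where the same computation gives a two-term correction within one block), whereas the expansion proof avoids any inversion and stays entirely within polynomial algebra; both are complete proofs of the stated identity.
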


The complement of a graph $G$ is denoted by $\overline{G}$.

\begin{lemma}\label{comp}\cite[Theorem 3.6]{Moh}
Let $G$ be a graph of order $n$. Then
$\mu_i(G)+\mu_{n-i}(\overline{G})=n$ for $i=1,\dots,n-1$.
\end{lemma}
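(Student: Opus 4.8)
The plan is to exploit the fundamental identity $L(G)+L(\overline{G})=L(K_n)$. First I would verify this by a direct computation: writing $J$ for the all-ones matrix, one has $L(K_n)=(n-1)I-(J-I)=nI-J$, and since every edge of $K_n$ lies in exactly one of $G$ and $\overline{G}$, the degree matrices and the off-diagonal adjacency entries add up correctly, giving
\[
L(G)+L(\overline{G})=nI-J.
\]

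Next I would bring in the all-ones vector $\mathbf{1}$, which lies in the kernel of every Laplacian, so $L(G)\mathbf{1}=L(\overline{G})\mathbf{1}=0$. I would then restrict attention to the orthogonal complement $W=\mathbf{1}^{\perp}$, an $(n-1)$-dimensional subspace that is invariant under both $L(G)$ and $L(\overline{G})$. On $W$ the rank-one operator $J$ acts as zero (every vector in $W$ is annihilated by $J$), so the identity above collapses to $L(G)|_W+L(\overline{G})|_W=nI|_W$.

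The key step is then that on $W$ the two restricted operators are complementary, $L(\overline{G})|_W=nI-L(G)|_W$, so they are simultaneously diagonalizable and their eigenvalues pair up under $\lambda\mapsto n-\lambda$. The eigenvalues of $L(G)|_W$ are exactly $\mu_1(G)\ge\cdots\ge\mu_{n-1}(G)$: the full spectrum of $L(G)$ is the spectrum of $L(G)|_W$ together with the single eigenvalue $0$ attached to $\mathbf{1}$, and since $\mu_n(G)=0$ this removes precisely one copy of $0$. Hence the eigenvalues of $L(\overline{G})|_W$ are $n-\mu_1(G)\le\cdots\le n-\mu_{n-1}(G)$; reading this list in nonincreasing order identifies the $i$-th largest eigenvalue of $L(\overline{G})|_W$ as $n-\mu_{n-i}(G)$, which gives $\mu_i(\overline{G})=n-\mu_{n-i}(G)$ for $i=1,\dots,n-1$, and reindexing yields $\mu_i(G)+\mu_{n-i}(\overline{G})=n$.

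I do not anticipate a serious obstacle here; the only point requiring care is bookkeeping the indices so that the eigenvalue $0$ coming from $\mathbf{1}$ is correctly excluded from the pairing, which is precisely why the identity is asserted only for $i=1,\dots,n-1$ and not for all $i$. One should also note that the argument makes no use of connectedness: even when the kernel of $L(G)$ has dimension larger than one, the decomposition of the spectrum along $W$ and $\mathrm{span}(\mathbf{1})$ removes exactly one copy of $0$, so the conclusion holds for every graph $G$ of order $n$.
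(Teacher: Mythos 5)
Your proof is correct, and it is precisely the standard argument behind this lemma: the paper itself offers no proof (Lemma~\ref{comp} is quoted directly from \cite[Theorem~3.6]{Moh}), and the identity $L(G)+L(\overline{G})=nI-J$ together with the restriction to the invariant subspace $\mathbf{1}^{\perp}$, on which $J$ vanishes, is exactly how that theorem is established in the cited source. Your index bookkeeping is accurate as well, in particular the point that exactly one copy of the eigenvalue $0$ is removed (so the eigenvalues of $L(G)$ on $\mathbf{1}^{\perp}$ are $\mu_1(G)\ge\cdots\ge\mu_{n-1}(G)$, even for disconnected graphs), which is why the pairing holds only for $i=1,\dots,n-1$.
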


A pendant vertex is a vertex of degree one. A vertex is quasi-pendant if it is adjacent
to a pendant vertex. The following lemma appeared in plain text \cite[p. 263]{Far}.

\begin{lemma} \label{chong} \cite{Far}
Let $G$ be a graph with $p$ pendant vertices and $q$ quasi-pendant vertices. Then
$m_G(1)\ge p-q$.
\end{lemma}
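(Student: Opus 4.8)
The plan is to prove the inequality by exhibiting $p-q$ linearly independent eigenvectors of $L(G)$ associated with the eigenvalue $1$; since $L(G)$ is symmetric, its geometric and algebraic multiplicities coincide, so this yields $m_G(1)\ge p-q$ at once. The construction rests on one elementary observation: if $u$ and $u'$ are two distinct pendant vertices sharing a common neighbor $v$ (which is then quasi-pendant), the vector $x\in\mathbb{R}^{V(G)}$ given by $x_u=1$, $x_{u'}=-1$, and $x_w=0$ for every other $w$ satisfies $L(G)x=x$. Thus each quasi-pendant vertex carrying several pendant neighbors contributes a pocket of eigenvectors for the eigenvalue $1$, and the total will telescope to $p-q$.

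First I would verify the claim by a direct computation using $L(G)=D(G)-A(G)$, so that $(L(G)x)_w=d_G(w)x_w-\sum_{z\in N_G(w)}x_z$ for each $w$. At the pendant vertex $u$ we have $d_G(u)=1$ and $N_G(u)=\{v\}$, whence $(L(G)x)_u=x_u-x_v=1-0=x_u$, and likewise $(L(G)x)_{u'}=x_{u'}$. At the shared neighbor $v$ one gets $(L(G)x)_v=d_G(v)x_v-\sum_{z\in N_G(v)}x_z=0-(x_u+x_{u'})=0=x_v$, because the only nonzero coordinates of $x$ sit at $u$ and $u'$. At any remaining vertex $w$ the coordinate $x_w$ vanishes, and $u,u'\notin N_G(w)$ since the sole neighbor of each pendant vertex is $v\ne w$; hence $(L(G)x)_w=0=x_w$. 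Therefore $L(G)x=x$, as asserted.

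Next I would organize the counting and the independence argument. Since every pendant vertex has exactly one neighbor, which is quasi-pendant, the set of pendant vertices is partitioned according to that neighbor. Writing $v_1,\dots,v_q$ for the quasi-pendant vertices and $p_\ell\ge 1$ for the number of pendant vertices adjacent to $v_\ell$, we have $\sum_{\ell=1}^q p_\ell=p$. For each $\ell$ I would fix one pendant neighbor $u_\ell^{(0)}$ of $v_\ell$ and, for each of the other $p_\ell-1$ pendant neighbors $u$ of $v_\ell$, form the eigenvector $e_u-e_{u_\ell^{(0)}}$ supplied by the previous paragraph, where $e_u$ denotes the standard unit vector at $u$. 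This produces $\sum_{\ell=1}^q(p_\ell-1)=p-q$ eigenvectors for the eigenvalue $1$. Vectors attached to distinct quasi-pendant vertices have disjoint supports, and for a fixed $\ell$ the vectors $e_u-e_{u_\ell^{(0)}}$ are independent because their positive supports are distinct singletons; hence all $p-q$ eigenvectors are linearly independent, and $m_G(1)\ge p-q$ follows. I do not anticipate a genuine obstacle: the proof is constructive, and the only point requiring care is the bookkeeping that confirms the constructed vectors are simultaneously eigenvectors and linearly independent while the count telescopes correctly to $p-q$.
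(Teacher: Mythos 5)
Your proof is correct and complete. The eigenvector computation at the pendant vertices, at the shared quasi-pendant neighbor, and at all remaining vertices is verified accurately; the partition of the $p$ pendant vertices according to their unique (quasi-pendant) neighbor gives $\sum_{\ell=1}^q(p_\ell-1)=p-q$ difference vectors; and the independence argument (disjoint supports across classes, distinct singleton ``positive'' supports within a class) is sound, so $m_G(1)\ge p-q$ follows since $L(G)$ is symmetric. Note that the paper itself offers no proof of this lemma: it is quoted verbatim from Faria \cite{Far}, so there is no internal argument to compare against. Your construction is the standard elementary proof of Faria's inequality, and it handles the degenerate cases automatically (e.g.\ a $K_2$ component, where a pendant vertex is itself quasi-pendant, contributes $p_\ell-1=0$ vectors, consistent with the telescoped count), so nothing further is needed.
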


\begin{lemma}\label{min}\cite[Result 4.1]{F}, \cite[Theorem 2.1]{KMNS}
Let $G$ be a connected graph of order $n$ that is not complete. Then $\mu_{n-1}(G)\le\kappa(G)$ with equality  only if $G$ is a join of two graphs, where $\kappa(G)$ is the connectivity of $G$.
\end{lemma}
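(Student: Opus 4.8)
The quantity $\mu_{n-1}(G)$ is the algebraic connectivity of $G$, and the asserted bound is Fiedler's classical inequality; the plan is to reprove it together with the equality analysis via the variational (Courant--Fischer) description of the second smallest Laplacian eigenvalue. Since $\mathbf{1}=(1,\dots,1)^{T}$ is a Laplacian eigenvector for $\mu_n(G)=0$, we have
\[
\mu_{n-1}(G)=\min_{\substack{x\perp \mathbf{1}\\ x\neq 0}}\frac{x^{T}L(G)x}{x^{T}x},\qquad x^{T}L(G)x=\sum_{uv\in E(G)}(x_u-x_v)^2 .
\]
Because $G$ is connected but not complete, it has a minimum vertex cut $S$ with $|S|=\kappa(G)=:k$ and $k\le n-2$. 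I would write the components of $G-S$ as a disjoint union of two nonempty vertex sets $V_1$ and $V_2$ carrying no edges between them, and set $n_i=|V_i|$.

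First I would establish the inequality by exhibiting a single good test vector. Define $x$ by $x_u=n_2$ for $u\in V_1$, $x_u=-n_1$ for $u\in V_2$, and $x_u=0$ for $u\in S$. Then $\sum_u x_u=n_1n_2-n_2n_1=0$, so $x\perp\mathbf{1}$, and $x^{T}x=n_1n_2^2+n_2n_1^2=n_1n_2(n_1+n_2)$. Since there are no $V_1$--$V_2$ edges and $x$ is constant on each of $V_1,V_2,S$, the only edges contributing to $x^{T}L(G)x$ are those meeting $S$; writing $e_i$ for the number of edges between $V_i$ and $S$ gives $x^{T}L(G)x=e_1n_2^2+e_2n_1^2$. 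Each vertex of $V_i$ has at most $k=|S|$ neighbours in $S$, so $e_i\le kn_i$, whence
\[
\mu_{n-1}(G)\le\frac{e_1n_2^2+e_2n_1^2}{n_1n_2(n_1+n_2)}\le\frac{kn_1n_2^2+kn_2n_1^2}{n_1n_2(n_1+n_2)}=k=\kappa(G).
\]

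For the equality case, suppose $\mu_{n-1}(G)=\kappa(G)$. Then both inequalities above are tight; in particular $(e_1-kn_1)n_2^2+(e_2-kn_2)n_1^2=0$ with both summands nonpositive, forcing $e_1=kn_1$ and $e_2=kn_2$. This means every vertex of $V_1\cup V_2$ is adjacent to every vertex of $S$, so $G=G[V_1\cup V_2]\vee G[S]$ is the join of two graphs, as claimed. I expect the equality analysis to be the delicate part: one must verify that tightness of the crude edge-counting bound $e_i\le kn_i$ is exactly equivalent to complete bipartite adjacency between $V_1\cup V_2$ and $S$, and that this in turn is precisely a join decomposition. By contrast the inequality itself follows immediately once the test vector is written down.
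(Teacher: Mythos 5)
Your proof is correct, but note that the paper itself contains no proof of this lemma: it is imported verbatim from Fiedler \cite{F} (the inequality, Result 4.1) and Kirkland--Molitierno--Neumann--Shader \cite{KMNS} (the equality condition, Theorem 2.1), so the comparison is with those sources rather than with an internal argument. Your route is genuinely different from both. Fiedler's classical proof of $\mu_{n-1}(G)\le\kappa(G)$ deletes the vertices of a minimum cut one at a time, using the monotonicity fact that removing a vertex lowers the algebraic connectivity by at most one (i.e.\ $\mu_{n-2}(G-v)\ge\mu_{n-1}(G)-1$) together with the observation that after $\kappa(G)$ deletions the graph is disconnected and hence has algebraic connectivity $0$; KMNS then obtain the equality statement (in fact a full if-and-only-if characterization, with additional constraints on the two join factors) by analyzing Fiedler eigenvectors. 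Your single test vector --- $x\equiv n_2$ on $V_1$, $x\equiv -n_1$ on $V_2$, $x\equiv 0$ on $S$ --- handles both halves at once: the computation $x\perp\mathbf{1}$, $x^TL(G)x=e_1n_2^2+e_2n_1^2$ (valid because $x$ is constant on each block and there are no $V_1$--$V_2$ edges), and $e_i\le\kappa(G)\,n_i$ gives the bound, and in the equality case the chain $\kappa(G)=\mu_{n-1}(G)\le x^TL(G)x/x^Tx\le\kappa(G)$ collapses, forcing $d_S(v)=|S|$ for every $v\in V_1\cup V_2$ and hence $G=G[V_1\cup V_2]\vee G[S]$ with both factors nonempty. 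Two small polish points: only the edge-counting bound needs a tightness analysis (the Courant--Fischer inequality becomes an equality automatically once the outer bound is attained, so ``both inequalities are tight'' is true but the first requires no separate verification); and you should state explicitly that $V_1,V_2$ are obtained by grouping the at least two components of $G-S$ into two nonempty unions, which is exactly what guarantees the absence of $V_1$--$V_2$ edges. What your approach buys is a short, self-contained, elementary proof of precisely the stated lemma; what it does not recover is the finer KMNS converse describing which joins actually achieve equality --- but since the lemma asserts only the ``only if'' direction, nothing is missing.
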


\section{Proof of Theorem \ref{Gen} for $k=1,2$}

In this section, we establish Theorem~\ref{Gen} in the cases $k=1,2$, which we restate below.

\begin{theorem} \label{XuL}
Let $G$ be a connected graph of order $n$ with girth $g$, where $4\le g\le n-1$. Then
\[
m_G[n-g+3,n]\le n-g
\]
with equality if and only if $g=n-1$ $($i.e., $G\cong K_{2,3}$ or $U_n$$)$. Moreover, if $g\le n-2$, then \[
m_G[n-g+2,n]\le n-g
 \]
with equality if and only if \eqref{xxxy} follows.
\end{theorem}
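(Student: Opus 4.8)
The plan is to prove both inequalities $m_G[n-g+3,n]\le n-g$ and $m_G[n-g+2,n]\le n-g$ by exploiting the shortest cycle $C_g$ as a subgraph and applying the interlacing and eigenvalue-shift tools from Section~2. The key structural fact is that a graph of girth $g$ contains a copy of $C_g$, and there are $n-g$ vertices outside this cycle; the idea is to control the large Laplacian eigenvalues by comparing $G$ against a disjoint union involving $C_g$. Since the girth forces the interior of the $g$-cycle to be chordless, the Laplacian of $G$ decomposes in a way that lets me bound the number of eigenvalues exceeding the threshold by the number of ``extra'' edges and vertices, namely $n-g$.

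For the first (weaker) bound I would argue as follows. Let $v$ lie outside $C_g$; the inequality $\mu_{i+1}(G)\le\mu_i(H)+1$ derived in Section~2 (with $H=G-v$) lets me peel off vertices one at a time. More directly, I expect to use Lemma~\ref{cauchy}: removing the $n-g$ vertices not on $C_g$, together with their incident edges, reduces $G$ to (a subgraph spanning) $C_g$, whose largest nontrivial Laplacian eigenvalues are governed by Lemma~\ref{sigma}(ii). Since $\mu_1(C_g)=2-2\cos\tfrac{2\lfloor g/2\rfloor\pi}{g}\le 4<n-g+3$ whenever $g\le n-1$, the eigenvalues of the cycle part all fall below the threshold; the shift lemma then forces at most $n-g$ eigenvalues of $G$ into $[n-g+3,n]$. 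The equality analysis for $k=1$ reduces to the case $g=n-1$ (only one vertex off the cycle), which is exactly the content of Theorem~1 of Zhen--Wong--Xu, giving $K_{2,3}$ or $U_n$.

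The substantive part is the second bound $m_G[n-g+2,n]\le n-g$ with the equality characterization~\eqref{xxxy}. Here I would lower the threshold by one, so the naive interlacing argument no longer suffices and a finer count is needed. My plan is to assume for contradiction that $m_G[n-g+2,n]=n-g+1$ and show this forces $g=n-2$ (exactly two vertices $w,z$ off the cycle), pinning $G$ down to a $C_{n-2}$ with two extra pendant-type attachments; this is precisely the family $Y_{n,i}$. The threshold crossing is analyzed via Lemma~\ref{comp} applied to $\overline{G}$ and via the explicit characteristic polynomial manipulation of Lemma~\ref{characteristic} when the attachments create cut edges. To separate the admissible positions $i=3,\dots,\lceil\tfrac{n-1}{2}\rceil$ from the inadmissible ones, I would compute $\mu_2$ (or the relevant eigenvalue) using Lemma~\ref{secondbound} and Lemma~\ref{d1up}, showing that attaching $z$ too close to $w$ (i.e.\ $i=1,2$) pushes an eigenvalue below $n-g+2$. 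The small cases $n=6,7,8$ must be handled separately because the sporadic graphs $K_{2,4},K_{2,3}^*,K_{2,3}^{**},K_{2,3}^{***}$ arise when the cycle is short enough that extra chords or multiple attachments become possible without reducing the count.

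The main obstacle I anticipate is the equality analysis for $k=2$: the inequality itself follows from interlacing with room to spare, but pinning down \emph{exactly} the graphs $Y_{n,i}$ for the stated range of $i$ requires sharp control of where the $(n-g+1)$-st largest eigenvalue sits relative to $n-g+2$. This is delicate because it depends on the precise position $i$ of the second attachment, so I expect to need explicit eigenvalue computations or a monotonicity argument in $i$ using the characteristic-polynomial recursion of Lemma~\ref{characteristic} together with the cycle spectrum of Lemma~\ref{sigma}(ii). Ruling out chords inside $C_g$ (which would lower the girth) and verifying that no additional edges among $\{w,z\}$ and the cycle preserve equality will be the most case-intensive portion of the argument.
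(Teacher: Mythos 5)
There is a genuine gap: the quantitative engine you propose cannot reach either threshold. Peeling off the $n-g$ vertices outside $C$ with the inequality $\mu_{i+1}(G)\le\mu_i(G-v)+1$ gives only $\mu_{n-g+1}(G)\le\mu_1(C_g)+(n-g)\le n-g+4$, which does not show $\mu_{n-g+1}(G)<n-g+3$, let alone $<n-g+2$; and your numerical claim ``$4<n-g+3$ whenever $g\le n-1$'' already fails at $g=n-1$. Deleting edges via Lemma~\ref{cauchy} is no better, because disconnecting $S$ from $C$ may require far more than $n-g$ edge deletions (e.g.\ $K_{2,n-2}$ with $g=4$ has about $2(n-4)$ such edges), so the index shift in the interlacing is again too lossy. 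The paper's proof hinges on a device absent from your plan: take the principal submatrix $B$ of $L(G)$ indexed by $V(C)$ together with \emph{two} (or $k$) vertices of $S$, write $B=L(H)+M$ where $M$ is diagonal with $\rho_1(M)\le|S'|=n-g-2$, and combine Lemma~\ref{interlacing} with Lemma~\ref{cw} to get $\mu_{n-g}(G)\le\mu_2(H)+n-g-2$; everything then reduces to checking $\mu_2(H)<4$ (or $\mu_3(H)<5$) for a short list of small graphs, which is where Lemmas~\ref{u1}, \ref{sigma} and the sporadic graphs $R_i$, $F_i$, $Q_i$ enter. Without retaining those extra vertices inside the submatrix you cannot recover the lost $2$ in the threshold, so your assertion that ``the inequality itself follows from interlacing with room to spare'' is false.

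The equality analysis is also not dispatchable the way you suggest. Assuming $m_G[n-g+2,n]=n-g+1$ and claiming this ``forces $g=n-2$'' is unsupported: the real work in the paper is showing that every configuration with $g\le n-3$, or with a vertex of $S$ having zero or at least two neighbours on $C$, or with edges inside $S$, satisfies the strict bound $\mu_{n-g}(G)<n-g+2$ (Cases 1, 2.1 and Claim~3 of the paper's proof), and separately that the graphs in \eqref{xxxy} satisfy $\mu_2\ge4$ (via Lemma~\ref{secondbound} and explicit computations) while $\mu_{n-g+1}<n-g+2$. Likewise, for $k=1$ the reduction of the equality case to $g=n-1$ is not free: it requires precisely the strict inequalities above for $g\le n-2$. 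Your outline correctly identifies the $Y_{n,i}$ family and the role of the attachment position $i$, but the inequality itself and the elimination of all non-extremal configurations — the bulk of the argument — are missing.
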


Before proving the theorem, we present some preparatory lemmas.

For $n\ge 6$, let $U_n'$ be the graph obtained from $C_{n-2}\cup K_2$ by adding an edge between one vertex of $C_{n-2}$ and one vertex of $K_2$.

\begin{lemma}\label{u1}
If $G\cong U_n$ with $n\ge 5$ or $G\cong U_n'$ with $n\ge 6$, then 
$\mu_2(G)<4$.
\end{lemma}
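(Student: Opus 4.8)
The plan is to pick, in each graph, the cut edge $e$ joining the attached tail to the cycle, so that $G-e$ splits as a disjoint union of a cycle and a small graph. Writing the cycle of $G$ as $C_m$ with $m=n-1$ when $G\cong U_n$ and $m=n-2$ when $G\cong U_n'$, removal of the edge $e=v_1v_2$ joining the cycle-vertex $v_1=v$ to the first tail-vertex $v_2$ gives $G-e=C_m\cup F$, where $F\cong K_1$ for $U_n$ and $F\cong K_2$ for $U_n'$. By Lemma~\ref{sigma}(ii) we have $\mu_1(C_m)\le 4$, and clearly $\mu_1(F)\le 2$, so $\mu_1(G-e)\le 4$; Lemma~\ref{cauchy} then yields $\mu_2(G)\le\mu_1(G-e)\le 4$. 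Thus the whole statement reduces to excluding the equality $\mu_2(G)=4$.

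I expect the only real difficulty to be the boundary case where the cycle is even, since then $\mu_1(C_m)=4$ and the bound above is tight; when $m$ is odd, Lemma~\ref{sigma}(ii) already gives $\mu_1(C_m)<4$ and the conclusion is immediate. For even $m$ I would argue that $4$ is simply not a Laplacian eigenvalue of $G$, forcing $\mu_2(G)<4$. The tool is Lemma~\ref{characteristic} applied to the cut edge $e=v_1v_2$ with $G_1=C_m$ (containing $v_1=v$) and $G_2=F$. Substituting $\Phi(L(K_1))=x$, $\Phi(L(K_2))=x(x-2)$, together with the convention $\Phi(L_{v_2}(F))=1$ for $F\cong K_1$ and $\Phi(L_{v_2}(F))=x-1$ for $F\cong K_2$, the formula expresses $\Phi(L(G))$ as a fixed combination of $\Phi(L(C_m))$ and $\Phi(L_v(C_m))$.

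The key point is the evaluation at $x=4$. Since $m$ is even, $4\in\sigma_L(C_m)$ by Lemma~\ref{sigma}(ii), so $\Phi(L(C_m))$ vanishes at $x=4$ and the combination collapses to a nonzero scalar multiple of $\Phi(L_v(C_m))\big|_{x=4}$. It then remains to check that $4\notin\sigma(L_v(C_m))$. Deleting $v$ from $C_m$ leaves the remaining $m-1$ vertices on a path while preserving the diagonal degree $2$, so $L_v(C_m)=2I-A(P_{m-1})$; by Lemma~\ref{sigma}(iii) its eigenvalues are $2-2\cos\frac{j\pi}{m}$ for $j=1,\dots,m-1$, and $4$ would force $j=m$, which is out of range. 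Hence $\Phi(L(G))\big|_{x=4}\ne 0$, so $4\notin\sigma_L(G)$ and $\mu_2(G)<4$.

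In summary, the main obstacle is precisely this even-cycle threshold, where the cycle's top eigenvalue $4$ sits exactly at the bound and one must show it does not survive the attachment of the tail; the characteristic-polynomial computation disposes of it cleanly. An equivalent and more conceptual route, which I would mention as a remark, is that $4$ is a simple eigenvalue of $C_m\cup F$ with eigenvector supported on the cycle (alternating $\pm1$, hence nonzero at $v$), so the rank-one positive semidefinite perturbation $L(G)-L(G-e)=(\mathbf{e}_v-\mathbf{e}_{v_2})(\mathbf{e}_v-\mathbf{e}_{v_2})^{\top}$, being non-orthogonal to that eigenvector, pushes the value $4$ strictly off the spectrum.
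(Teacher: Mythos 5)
Your proof is correct, but it takes a different route from the paper's. The paper deletes a \emph{cycle} edge incident to the vertex of degree three: since $G-uv\cong P_n$, Lemma~\ref{cauchy} together with Lemma~\ref{sigma}(i) gives $\mu_2(G)\le\mu_1(P_n)=2+2\cos\frac{\pi}{n}<4$ in one line, with no equality case to exclude. You instead delete the \emph{cut} edge, obtaining $C_m\cup F$ with $F\in\{K_1,K_2\}$; this only yields $\mu_2(G)\le 4$, and when $m$ is even you must rule out equality, which you do correctly via Lemma~\ref{characteristic}: at $x=4$ the term $\Phi(L(C_m))$ vanishes and what remains is a nonzero multiple ($-4$ resp. $-8$) of $\Phi(L_v(C_m))\big|_{x=4}$, which is nonzero because $L_v(C_m)=2I-A(P_{m-1})$ has spectrum $\{2-2\cos\frac{j\pi}{m}:j=1,\dots,m-1\}$ bounded strictly below $4$; hence $4\notin\sigma_L(G)$. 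Your rank-one perturbation remark is also sound, since the $4$-eigenvector of $C_m\cup F$ is simple and not orthogonal to $\mathbf{e}_v-\mathbf{e}_{v_2}$. What the paper's choice of edge buys is brevity — breaking the cycle at the degree-three vertex absorbs the tail into a single path, so the bound is strict from the start; what your choice buys is the slightly stronger conclusion that $4$ is never a Laplacian eigenvalue of these graphs, and it rehearses exactly the cut-edge characteristic-polynomial technique (including the identity $L_v(C_m)=2I-A(P_{m-1})$) that the paper itself deploys later in the proof of Lemma~\ref{cal2}.
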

\begin{proof}
Denote by $u$ the vertex of degree three in $G$ and $v$ one of its neighbor (on the cycle) of degree two. As $G-uv\cong P_n$, we have by Lemmas \ref{cauchy} and \ref{sigma} (i) that $\mu_2(G)\le \mu_1(P_n)=2+2\cos \frac{\pi}{n}<4$.
\end{proof}

\begin{lemma}\label{cal2} For $n\ge 6$,
 $\mu_2(Y_{n,1})<4$.
\end{lemma}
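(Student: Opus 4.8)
The goal is to show $\mu_2(Y_{n,1})<4$ for $n\ge 6$, where $Y_{n,1}$ is built from the cycle $C_{n-2}=u_1\cdots u_{n-2}u_1$ by attaching two pendant vertices $w,z$ to the same vertex $u_1$ (since $i=1$). My plan is to mimic the strategy of Lemma~\ref{u1}: find a cheap edge to delete so that the resulting graph's largest Laplacian eigenvalue is easy to bound, and then invoke the interlacing inequality of Lemma~\ref{cauchy}, namely $\mu_2(Y_{n,1})\le\mu_1(Y_{n,1}-e)$ for any single edge $e$.

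The natural candidate is to delete one of the two cycle-edges incident to $u_1$, say $e=u_1u_{n-2}$. After removing $e$, the vertex $u_1$ retains exactly three neighbors: $u_2$ (on the broken cycle), and the two pendants $w,z$. The remaining graph $Y_{n,1}-e$ is a tree: a path $u_{n-2}u_{n-3}\cdots u_2u_1$ of length $n-3$ with two extra pendant edges $u_1w$ and $u_1z$ attached at the endpoint $u_1$ --- in other words a ``spider'' or caterpillar with a single degree-three branch vertex. First I would identify this tree explicitly and compute (or bound) its largest Laplacian eigenvalue. The maximum degree here is $\Delta=3$ (at $u_1$), so Lemma~\ref{d1} only gives the lower bound $\mu_1\ge4$, which is the wrong direction and is exactly the quantity I need to push below $4$; so a sharper upper bound is required.

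The sharp tool is Lemma~\ref{d1up}. In $Y_{n,1}-e$ the degree sequence has one vertex of degree $3$ (namely $u_1$, with neighbors $u_2,w,z$) and all other vertices of degree at most $2$. Hence the edge $u_1u_2$ realizes the maximum degree sum $r=d(u_1)+d(u_2)=3+2=5$, while every other edge has endpoint-degree sum at most $s=\max\{3+1,\,2+2\}=4$ (the value $4$ coming either from the pendant edges $u_1w,u_1z$ with sum $3+1$, or from an interior path edge with sum $2+2$). Plugging into Lemma~\ref{d1up} yields
\[
\mu_1(Y_{n,1}-e)\le 2+\sqrt{(r-2)(s-2)}=2+\sqrt{3\cdot2}=2+\sqrt6<4,
\]
since $\sqrt6<2$. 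The equality case of Lemma~\ref{d1up} requires $Y_{n,1}-e$ to be a semiregular bipartite graph or $P_4$; our tree is neither (it is not semiregular and for $n\ge6$ it is not $P_4$), so in fact the inequality is strict, though the strict bound $2+\sqrt6<4$ already suffices. Combining with $\mu_2(Y_{n,1})\le\mu_1(Y_{n,1}-e)$ completes the argument.

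The only real subtlety, and the step to double-check, is the bookkeeping of degree sums that feeds Lemma~\ref{d1up}: I must confirm that after deleting $e=u_1u_{n-2}$ the unique vertex of degree $3$ is $u_1$, that the edge attaining $r$ is $u_1u_2$, and that the second-largest edge-degree-sum $s$ is genuinely $4$ rather than $5$ --- this hinges on there being no second vertex of degree $3$ adjacent to $u_1$, which holds precisely because $u_2$ has degree $2$ in the broken cycle. A minor boundary case is $n=6$, where the path portion is short ($u_{n-2}u_{n-3}\cdots u_2u_1$ degenerates to $u_4u_3u_2u_1$); I would verify directly that the degree-sum computation still gives $r=5,\ s=4$, so the bound $\mu_2(Y_{6,1})\le 2+\sqrt6<4$ persists. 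Everything else is routine once the auxiliary edge is chosen and the two lemmas are applied.
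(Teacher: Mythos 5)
Your decisive step fails numerically: $\sqrt6\approx 2.449>2$, so Lemma~\ref{d1up} only gives $\mu_1(Y_{n,1}-u_1u_{n-2})\le 2+\sqrt6\approx 4.449$, which is not below $4$. Worse, the step cannot be repaired by any sharper upper bound: $Y_{n,1}-u_1u_{n-2}$ is a tree on $n\ge 6$ vertices with maximum degree $3$, so the very Lemma~\ref{d1} you quote forces $\mu_1(Y_{n,1}-u_1u_{n-2})>\Delta+1=4$ (equality would require $\Delta=n-1$). The same holds after deleting \emph{any} single edge of $Y_{n,1}$, since $u_1$ retains degree at least $3$; hence the interlacing inequality $\mu_2(Y_{n,1})\le\mu_1(Y_{n,1}-e)$ from Lemma~\ref{cauchy} can never certify $\mu_2(Y_{n,1})<4$, whatever edge $e$ is chosen. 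This is precisely why the one-edge-deletion trick works for $U_n$ and $U_n'$ in Lemma~\ref{u1}, where the deletion produces $P_n$ with maximum degree $2$, but is structurally unavailable for $Y_{n,1}$.

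For this reason the paper argues quite differently: the cases $6\le n\le 10$ are checked directly, and for $n\ge 11$ the characteristic polynomial $\Phi(L(Y_{n,1}))$ is factored through the cut-edge formula of Lemma~\ref{characteristic}, written in terms of the spectra of $C_{n-2}$ and $P_{n-3}$ via Lemma~\ref{sigma}, and $\mu_2$ is located below $4$ by sign-change (root-location) arguments combined with interlacing bounds such as $\mu_4(Y_{n,1})\le\mu_2(P_{n-3}\cup P_3)$ and $\mu_1(Y_{n,1})\ge 5$. If you want a short argument in the spirit of ``remove something at $u_1$,'' remove the vertex rather than an edge: by Lemma~\ref{interlacing}, $\mu_2(Y_{n,1})\le\rho_1(L_{u_1}(Y_{n,1}))$, and $L_{u_1}(Y_{n,1})$ is the direct sum of $2I-A(P_{n-3})$ and two $1\times 1$ blocks $[1]$, whose largest eigenvalue is $2+2\cos\frac{\pi}{n-2}<4$ by Lemma~\ref{sigma}\,(iii). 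As written, however, your proof does not establish the lemma.
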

\begin{proof}
It is easy to check that $\mu_2(Y_{n,1})<4$ for $n=6,\dots, 10$. Suppose that $n\ge 11$.
Note that $u_1w$ is a cut edge of $Y_{n,1}$ and $Y_{n,1}-u_1w\cong U_{n-1}\cup K_1$.
By Lemma \ref{characteristic}, we have \[
\Phi(L(Y_{n,1}))=\Phi(L(U_{n-1}))x-\Phi(L(U_{n-1}))-\Phi(L_{u_1}(U_{n-1}))x.
\]
As $u_1z$ is also a cut edge of $U_{n-1}$, we have by Lemma \ref{characteristic} again that \[
\Phi(L(U_{n-1}))=\Phi(L(C_{n-2}))x-\Phi(L(C_{n-2}))-\Phi(L_{u_1}(C_{n-2}))x.
\]
Note that $\Phi(L_{u_1}(U_{n-1}))=(x-1)\Phi(L_{u_1}(C_{n-2}))$. Then
\begin{align*}
\Phi(L(Y_{n,1}))&=(x-1)\Phi(L(U_{n-1}))-\Phi(L_{u_1}(U_{n-1}))x\\
&=(x-1)^2\Phi(L(C_{n-2}))-2(x-1)x\Phi(L_{u_1}(C_{n-2})).
\end{align*}
By Lemma \ref{sigma} (ii), $\Phi(L(C_{n-2}))=\prod_{i=0}^{n-3}(x-2+2\cos\frac{2i\pi}{n-2})$. From
$L_{u_1}(C_{n-2})=2I-A(P_{n-3})$ and Lemma \ref{sigma} (iii),  we have $
\Phi(L_{u_1}(C_{n-2}))=\prod_{i=1}^{n-3}\left(x-2+2\cos \frac{i\pi}{n-2}\right)$.
Then
\begin{align*}
\Phi(L(Y_{n,1}))&=(x-1)^2\Phi(L(C_{n-2}))-2(x-1)x\Phi(L_{u_1}(C_{n-2}))\\
&=(x-1)^2\prod_{i=0}^{n-3}\left(x-2+2\cos\frac{2i\pi}{n-2}\right)-2(x-1)x\prod_{i=1}^{n-3}\left(x-2+2\cos \frac{i\pi}{n-2}\right)\\
&=(x-1)xf(x),
\end{align*}
where
\[
f(x)=(x-1)\prod_{i=1}^{n-3}\left(x-2+2\cos \frac{2i\pi}{n-2}\right)-2\prod_{i=1}^{n-3}\left(x-2+2\cos\frac{i\pi}{n-2} \right).
\]

Suppose that $n$ is even. Then
\[
f(x)=\prod_{i=1}^{n/2-2}\left(x-2+2\cos \frac{2i\pi}{n-2}\right)g_1(x),
\]
where \[
g_1(x)=(x-1)(x-4)\prod_{i=n/2}^{n-3}\left(x-2+2\cos \frac{2i\pi}{n-2}\right)-2\prod_{i=0}^{n/2-2}\left(x-2+2\cos \frac{(2i+1)\pi}{n-2}\right).
\]
Note that \[
g_1\left(2+2\cos  \frac{2\pi}{n-2}\right)=-2\prod_{i=0}^{n/2-2}\left(2\cos  \frac{2\pi}{n-2}+2\cos \frac{(2i+1)\pi}{n-2}\right)>0.
\]
Then $2+2\cos \frac{2\pi}{n-2}$ is a root of $f(x)=0$, but not a root of $g_1(x)=0$.
As \[
g_1(4)=-2\prod_{i=0}^{n/2-2}\left(2+2\cos \frac{(2i+1)\pi}{n-2}\right)<0,
\]
there is a root of $g_1(x)=0$ in the interval $(2+2\cos\frac{2\pi}{n-2},4)$.
As $Y_{n,1}-u_1u_2-u_1u_{n-2}\cong P_{n-3}\cup P_3$, we have by Lemma \ref{cauchy} that
\[
\mu_4(Y_{n,1})\le \mu_2(P_{n-3}\cup P_3)=2+2\cos \frac{2\pi}{n-3}<2+2\cos \frac{2\pi}{n-2}.
\]
By Lemma \ref{d1}, $\mu_1(Y_{n,1})\ge 5$.
So $\mu_2(Y_{n,1})<4$.

Suppose next that $n$ is odd. Then \[
f(x)=\prod_{i=1}^{(n-3)/2}\left(x-2+2\cos \frac{2i\pi}{n-2}\right)g_2(x),
\]
where \[
g_2(x)=(x-1)\prod_{i=(n-1)/2}^{n-3}\left(x-2+2\cos \frac{2i\pi}{n-2}\right)-2\prod_{i=0}^{(n-5)/2}\left(x-2+2\cos \frac{(2i+1)\pi}{n-2}\right).
\]
Then $2+2\cos\frac{\pi}{n-2}$ is a root of $f(x)=0$.
As $Y_{n,1}-u_1u_2-u_1u_{n-2}\cong P_{n-3}\cup P_3$, we have by Lemmas \ref{cauchy} and \ref{sigma} (i) that \[
\mu_3(Y_{n,1})\le \mu_1(P_{n-3}\cup P_3)=2+2\cos \frac{\pi}{n-3}<2+2\cos\frac{\pi}{n-2}.
\]
Recall that  $\mu_1(Y_{n,1})\ge 5$. So $\mu_2(Y_{n,1})=2+2\cos \frac{\pi}{n-2}<4$, as desired.
\end{proof}

\begin{lemma} \label{za} For $n\ge 6$, $\mu_2(Y_{n,2})<4$, and for $n=6,8$, $\mu_2(Y_{n,3})<4$.
\end{lemma}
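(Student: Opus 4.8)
The plan is to treat the two families separately: $Y_{n,2}$ yields to a single edge deletion, while $Y_{6,3}$ and $Y_{8,3}$ require a short spectral argument.

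For $Y_{n,2}$ I would mimic the proof of Lemma~\ref{u1}. The graph $Y_{n,2}$ carries a Hamiltonian path, namely $w\,u_1\,u_{n-2}\,u_{n-3}\cdots u_3\,u_2\,z$, which uses every edge except the cycle edge $u_1u_2$. Hence $Y_{n,2}-u_1u_2\cong P_n$, and Lemma~\ref{cauchy} together with Lemma~\ref{sigma}(i) give $\mu_2(Y_{n,2})\le\mu_1(P_n)=2+2\cos\frac{\pi}{n}<4$. This disposes of all $n\ge 6$ at once.

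The cases $Y_{6,3}$ and $Y_{8,3}$ are genuinely harder, because neither admits a Hamiltonian path: the two pendants $w,z$ must be the endpoints of any spanning path, but the two degree-three vertices $u_1,u_3$ are non-adjacent, so no single edge deletion can remove both branchings. Consequently $Y_{n,3}-e$ always retains a vertex of degree three and, by Lemma~\ref{d1}, satisfies $\mu_1\ge 4$, so the one-edge trick fails. Instead I would delete two edges to reach a disjoint union of short paths, namely $Y_{6,3}-u_1u_2-u_3u_4\cong 2P_3$ and $Y_{8,3}-u_1u_6-u_3u_4\cong P_5\cup P_3$. Applying Lemma~\ref{cauchy} with $|E_1|=2$ and Lemma~\ref{sigma}(i) then yields $\mu_3(Y_{6,3})\le\mu_1(P_3)=3<4$ and $\mu_3(Y_{8,3})\le\mu_1(P_5)=2+2\cos\frac{\pi}{5}<4$. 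Thus each graph has at most two Laplacian eigenvalues in $[4,\infty)$.

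It remains to exclude the possibility that both $\mu_1$ and $\mu_2$ reach $4$. For this I would evaluate the Laplacian characteristic polynomial at $x=4$, computing it by peeling off the two pendant cut edges $u_1w$ and $u_3z$ with Lemma~\ref{characteristic} and inserting the cycle spectrum and the relevant principal-submatrix spectra via Lemma~\ref{sigma}. The computation should give a negative value at $x=4$ for both graphs; indeed for $n=6$ the nontrivial quartic factor is expected to be $(x^2-4x+2)(x^2-6x+6)$, so that $\mu_2(Y_{6,3})=2+\sqrt2$. Since $\Phi(L(G))=\prod_i\bigl(x-\mu_i(G)\bigr)$, a negative value at $x=4$ with no vanishing factor means an \emph{odd} number of eigenvalues exceed $4$; combined with $\mu_3<4$ from the previous step, exactly one eigenvalue exceeds $4$, whence $\mu_2<4$, as desired. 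The main obstacle is the bookkeeping in this last step, namely carrying out the cut-edge expansion for $Y_{8,3}$ and the attendant determinants cleanly enough to certify the sign at $x=4$; the parity argument is what makes the task tractable, since it requires only the single evaluation $\Phi(L(Y_{8,3}))<0$ at $x=4$ rather than the full spectrum.
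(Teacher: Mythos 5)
Your proposal is correct, and it splits naturally into two halves. For $Y_{n,2}$ you do exactly what the paper does: delete the cycle edge $u_1u_2$ to get $P_n$ and apply Lemmas \ref{cauchy} and \ref{sigma}(i), so $\mu_2(Y_{n,2})\le\mu_1(P_n)<4$. For $Y_{6,3}$ and $Y_{8,3}$ the paper simply says ``checked by a direct calculation'' (i.e., it computes the spectra of these two small graphs outright), whereas you give a structured substitute: your observation that the one-edge trick must fail (every $Y_{n,3}-e$ keeps a vertex of degree $3$, so $\mu_1(Y_{n,3}-e)\ge 4$ by Lemma \ref{d1}, applied to the component containing that vertex) is accurate, your two-edge deletions are right ($Y_{6,3}-u_1u_2-u_3u_4\cong 2P_3$ and $Y_{8,3}-u_1u_6-u_3u_4\cong P_5\cup P_3$, giving $\mu_3\le 3$ and $\mu_3\le 2+2\cos\frac{\pi}{5}$ respectively via the remark after Lemma \ref{cauchy}), and the parity argument is sound: since $\Phi(L(G))(4)=\prod_i(4-\mu_i)$, a strictly negative value forces an odd number of eigenvalues above $4$ and none equal to $4$, which together with $\mu_3<4$ yields $\mu_2<4$. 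The sign claims you leave as ``bookkeeping'' do check out: $\sigma_L(Y_{6,3})=\{3+\sqrt3,\,2+\sqrt2,\,2,\,3-\sqrt3,\,2-\sqrt2,\,0\}$, so your factor $(x^2-4x+2)(x^2-6x+6)$ and $\mu_2(Y_{6,3})=2+\sqrt2$ are correct and $\Phi(L(Y_{6,3}))(4)=-32$; and using the reflection symmetry of $Y_{8,3}$ through $u_2,u_5$ one gets $\Phi(L(Y_{8,3}))(4)=-16<0$. So your route buys a verification that needs only one polynomial evaluation per graph instead of two full $6\times6$ and $8\times8$ spectra, at the cost of the extra interlacing step; to be a complete proof you would still need to include those two evaluations (or the symmetric/antisymmetric factorization) explicitly rather than asserting the expected sign.
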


\begin{proof}
As $Y_{n,2}-u_1u_2\cong P_n$, we have by Lemmas \ref{cauchy} and \ref{sigma} (i) that $\mu_2(Y_{n,2})\le \mu_1(P_n)<4$. If $n=6,8$, then it can be checked by a direct calculation that $\mu_2(Y_{n,3})<4$.
\end{proof}

Now, we are ready to prove Theorem \ref{XuL}.

\begin{proof}[Proof of Theorem \ref{XuL}]

\begin{Claim} \label{c1}
If $g=n-1$, then $m_G[n-g+3,n]= n-g$.
\end{Claim}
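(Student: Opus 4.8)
The plan is to use that $g=n-1$ forces $n-g=1$ and $n-g+3=4$, so the claim reduces to showing that $G$ has exactly one Laplacian eigenvalue in the closed interval $[4,n]$; equivalently, that $\mu_1(G)\ge 4$ while $\mu_2(G)<4$. Since every Laplacian eigenvalue of an order-$n$ graph lies in $[0,n]$, the first inequality puts $\mu_1(G)$ into $[4,n]$ and the second prevents any further eigenvalue from reaching $4$, giving $m_G[4,n]=1=n-g$.

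First I would pin down the structure of $G$. Having girth $g=n-1$, the graph contains a shortest cycle $C_{n-1}$ using $n-1$ of its vertices, and this cycle is necessarily chordless (a chord would split it into two strictly shorter cycles). The single remaining vertex $w$ must be attached without creating a cycle shorter than $n-1$. If $d_G(w)=1$, then $w$ is a pendant vertex and $G\cong U_n$. If $d_G(w)\ge 2$, then two neighbors of $w$ on the cycle at shorter-arc distance $d$ create a cycle of length $d+2\ge n-1$; together with $d\le\lfloor (n-1)/2\rfloor$ this forces $n=5$, and a short check on $C_4$ then yields $G\cong K_{2,3}$. Hence the only graphs to handle are $U_n$ (for $n\ge 5$) and $K_{2,3}$ (when $n=5$), matching the parenthetical identification in the theorem.

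For the lower bound I would apply Lemma \ref{d1}: in both $U_n$ and $K_{2,3}$ the maximum degree is $\Delta=3$, so $\mu_1(G)\ge\Delta+1=4$ and thus $\mu_1(G)\in[4,n]$. For the upper bound I would invoke Lemma \ref{u1} when $G\cong U_n$, which gives $\mu_2(U_n)<4$ at once, and for $G\cong K_{2,3}$ I would read off the Laplacian spectrum $\{0,2^{[2]},3,5\}$, so that $\mu_2(K_{2,3})=3<4$. Combining the two bounds shows that exactly one eigenvalue lies in $[4,n]$.

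The main obstacle is the estimate $\mu_2(G)<4$, which for the generic family $U_n$ is exactly the content of Lemma \ref{u1} (proved via the cut-edge reduction $U_n-uv\cong P_n$ together with $\mu_1(P_n)=2+2\cos\frac{\pi}{n}<4$); the structural enumeration and the degree-based lower bound are routine by comparison. Since Lemma \ref{u1} is already available and $K_{2,3}$ is settled by its explicit spectrum, the proof is essentially a matter of assembling these ingredients.
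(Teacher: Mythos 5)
Your proposal is correct and follows essentially the same route as the paper: identify $G$ as $K_{2,3}$ or $U_n$, get $\mu_1(G)\ge 4$ from Lemma \ref{d1}, and get $\mu_2(G)<4$ from Lemma \ref{u1} for $U_n$ and from the explicit spectrum $\{5,3,2^{[2]},0\}$ for $K_{2,3}$. The only difference is that you derive the structural classification ($G\cong K_{2,3}$ or $U_n$) from the girth condition yourself, whereas the paper simply invokes it as the known equality case stated in the theorem of Zhen, Wong and Xu.
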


\begin{proof}
Suppose that $g=n-1$. Then $G\cong K_{2,3}$ (with $n=5$)  or $U_n$.
In the former case, as $\sigma_L(K_{2,3})=\{5,3,2^{[2]},0\}$, we have
$m_G[n-g+3,n]=m_{K_{2,3}}[4,5]=1=n-g$. In the latter case, we have  $\mu_1(U_n)>4$ by Lemma \ref{d1} and
 $\mu_2(U_n)<4=n-g+3$ by Lemma \ref{u1}, so
 $m_G[n-g+3,n]=m_{U_n}[4,n]=1=n-g$.
\end{proof}

By Claim \ref{c1} and noting that $m_G[n-g+3,n]\le m_G[n-g+2,n]$ for $g\le n-2$, we only need to show the second part of Theorem \ref{XuL}. Suppose that $g\le n-2$.

\begin{Claim} \label{c2}
If $G$ is a graph given in \eqref{xxxy}, then $m_G[n-g+2,n]\ge n-g$.
\end{Claim}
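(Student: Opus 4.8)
The plan is to first reduce the claim to a single uniform inequality. For every graph $G$ in \eqref{xxxy} the girth satisfies $g=n-2$: for $Y_{n,i}$ the only cycle is $C_{n-2}$; $K_{2,4}$ has girth $4=n-2$; and $K_{2,3}^{*},K_{2,3}^{**},K_{2,3}^{***}$ arise from $K_{2,3}$ by subdividing $1,2,3$ of the edges incident with one vertex of the part of size two, which destroys the relevant $4$-cycles (every $4$-cycle of $K_{2,3}$ uses two edges at that vertex) and so makes the girth equal to $4,5,6$ and $n$ equal to $6,7,8$, respectively. Hence $n-g=2$ in every case, $n-g+2=4$, and the assertion $m_G[n-g+2,n]\ge n-g$ becomes $m_G[4,n]\ge 2$, that is, $\mu_2(G)\ge 4$. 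Since each such $G$ has maximum degree $\Delta\ge 3$ with $\Delta\le n-2<n-1$, Lemma~\ref{d1} already gives $\mu_1(G)>4$; thus it suffices to exhibit a \emph{second} Laplacian eigenvalue that is at least $4$.

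For most of these graphs this second eigenvalue comes directly from Lemma~\ref{secondbound}. In $K_{2,3}^{***}$ the two vertices $a_1,a_2$ of the original part of size two have degree $3$, are non-adjacent, and, because all three edges at $a_2$ were subdivided, have disjoint neighbourhoods, so $\mu_2\ge d_2+1=4$. In $Y_{n,i}$ the vertices $u_1,u_i$ are the only ones of degree $3$; they are non-adjacent exactly when $i\ne 2,n-2$, and for $4\le i\le\lceil\frac{n-1}{2}\rceil$ their neighbourhoods $\{u_2,u_{n-2},w\}$ and $\{u_{i-1},u_{i+1},z\}$ are disjoint, so again $\mu_2\ge d_2+1=4$. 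Finally $K_{2,4}$ is settled by its known spectrum $\sigma_L(K_{2,4})=\{6,4,2^{[3]},0\}$, whence $\mu_2=4$.

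The cases left over are precisely those in which the two degree-$3$ vertices share a common neighbour, so Lemma~\ref{secondbound} does not apply: the fixed small graphs $K_{2,3}^{*},K_{2,3}^{**}$ (orders $6,7$) and $Y_{7,3}$ (order $7$), together with the infinite family $Y_{n,3}$ for $n\ge 9$. For the three fixed graphs I would verify $\mu_2\ge 4$ by direct computation; each carries a reflection symmetry (for $K_{2,3}^{*},K_{2,3}^{**}$ the swap of the two subdivided branches, for $Y_{7,3}$ the reflection $u_1\leftrightarrow u_3$, $u_4\leftrightarrow u_5$, $w\leftrightarrow z$), which splits $L(G)$ into small blocks, producing an eigenvalue equal to or exceeding $4$ in one block while $\mu_1>4$ lives in another. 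For the family $Y_{n,3}$, $n\ge 9$, I would imitate the computation of Lemma~\ref{cal2}: both $u_1w$ and $u_3z$ are cut edges, so two applications of Lemma~\ref{characteristic} express $\Phi(L(Y_{n,3}))$ through $\Phi(L(C_{n-2}))$ and the grounded polynomial obtained by deleting a cycle vertex lying at distance $2$ from the pendant attachment, each of which factors as a product of terms $x-2+2\cos(\cdot)$ by Lemma~\ref{sigma}. Locating the roots of the resulting one-variable polynomial would then yield two roots of $\Phi(L(Y_{n,3}))$ that are at least $4$.

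The main obstacle is exactly this last family. The shared neighbour $u_2$ rules out Lemma~\ref{secondbound}, and the cruder tools are too lossy here: interlacing (Lemma~\ref{interlacing}) with the natural small principal submatrices, or passing to a spanning subgraph via Lemma~\ref{cauchy}, only delivers a second eigenvalue near $2+\sqrt{3}<4$, and indeed $Y_{6,3}$ and $Y_{8,3}$ already satisfy $\mu_2<4$ by Lemma~\ref{za}. Thus $\mu_2(Y_{n,3})$ sits only marginally above $4$ and its location depends delicately on $n$, so nothing short of pinning the second-largest root of the explicit characteristic polynomial appears to work; carrying out that root-location cleanly and uniformly for all $n\ge 9$ is the technical heart of the argument.
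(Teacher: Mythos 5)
Your reduction to $\mu_2(G)\ge 4$ is exactly the paper's first step (every graph in \eqref{xxxy} has $g=n-2$), and your treatment of $K_{2,4}$, of $K_{2,3}^{***}$, and of $Y_{n,i}$ for $4\le i\le\lceil\frac{n-1}{2}\rceil$ via Lemma~\ref{secondbound} is correct (the paper uses Lemma~\ref{secondbound} for the same range of $i$ and direct calculation for the sporadic small graphs, so up to here you are essentially on the paper's route). The genuine gap is the infinite family $Y_{n,3}$, $n\ge 9$, which you yourself identify as the technical heart but do not prove: you only announce that you ``would imitate'' the cut-edge computation of Lemma~\ref{cal2} and then locate roots of the resulting polynomial, without carrying out the root location, and doing this uniformly in $n$ is precisely the nontrivial part. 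A plan of that kind cannot be accepted as a proof of the claim.

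Moreover, your stated reason for abandoning the simpler tools is incorrect: you assert that passing to a spanning subgraph via Lemma~\ref{cauchy} ``only delivers a second eigenvalue near $2+\sqrt{3}<4$,'' but the paper closes this very case by exactly that device with a better choice of subgraph. For $n\ge 11$ the graph $Y_{n,3}$ contains the fixed $11$-vertex tree $F$ of Fig.~\ref{graphF} (a path on nine vertices with two pendant vertices attached at the two vertices $u_1$ and $u_3$, i.e.\ at distance two) as a subgraph; since deleting edges cannot increase any Laplacian eigenvalue (Lemma~\ref{cauchy}, applied edge by edge, with isolated vertices contributing only zeros), one gets $\mu_2(Y_{n,3})\ge\mu_2(F)=4.01>4$, and the remaining values $n=9,10$ are finite checks. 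The loss you observed comes from breaking the cycle at the two edges near $u_1,u_3$ (which separates the two pendants); keeping the two pendant attachments on one long path avoids it. So either import this subgraph argument, or actually complete the characteristic-polynomial analysis you sketch; as written, the case $Y_{n,3}$, $n\ge 9$, is unproved.
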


\begin{proof} Suppose that $G$ is a graph given in \eqref{xxxy}.  Firstly, we show that
$\mu_2(G)\ge 4$.
By a direct calculation, $\mu_2(K_{2,4})=\mu_2(K_{2,3}^*)=\mu_2(K_{2,3}^{***})=4$, $\mu_2(K_{2,3}^{**})=4.414$,
and $\mu_2(Y_{n,3})\ge 4$ if $n=7,9,10$. If $n\ge 11$, then $Y_{n,3}$ contains $F$ as a subgraph (see Fig. \ref{graphF}), so we have by Lemma \ref{cauchy} that $\mu_2(Y_{n,3})\ge \mu_2(F)=4.01>4$.
Suppose that $n\ge 8$.  Note that $u_1$ and $u_i$ the vertices of maximum and the second largest degree in $Y_{n,i}$ for $i=4,\dots, \lceil\frac{n-1}{2}\rceil$. As $u_1u_i\notin E(Y_{n,i})$ and $N_{Y_{n,i}}(u_1)\cap N_{Y_{n,i}}(u_i)=\emptyset$, we have by Lemma \ref{secondbound} that $\mu_2(Y_{n,i})\ge 4$. It follows that $\mu_2(G)\ge 4$.

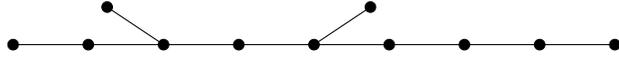
\begin{figure}[htbp]
\centering
\begin{tikzpicture}
\filldraw [black] (5,0) circle (2pt);
\filldraw [black] (4,0) circle (2pt);
\filldraw [black] (3,0) circle (2pt);
\filldraw [black] (2,0) circle (2pt);
\filldraw [black] (1,0) circle (2pt);
\filldraw [black] (0,0) circle (2pt);
\filldraw [black] (-1,0) circle (2pt);
\filldraw [black] (-2,0) circle (2pt);
\filldraw [black] (-3,0) circle (2pt);
\filldraw [black] (1.75,0.5) circle (2pt);
\filldraw [black] (-1.75,0.5) circle (2pt);
\draw[black] (5,0)--(-3,0);
\draw[black] (1.75,0.5)--(1,0);
\draw[black] (-1.75,0.5)--(-1,0);
\end{tikzpicture}
\caption{The graph $F$.}
\label{graphF}
\end{figure}

Secondly, as $g=n-2$ and $\mu_2(G)\ge 4$, we have
$m_G[n-g+2,n]=m_G[4,n]\ge 2=n-g$.
\end{proof}

By the fact  that $m_G[n-g+2,n]\le n-g$ if and only if $\mu_{n-g+1}(G)<n-g+2$, together with
Claim \ref{c2} (as well as the trivial fact that $\mu_{n-g+1}(G)\le \mu_{n-g}(G)$),  it suffices to show the following two items:

\begin{enumerate}

\item[(i)]  If $G$ is  a graph in \eqref{xxxy}, then $\mu_{n-g+1}(G)<n-g+2$.

\item[(ii)]  If $G$ is not (isomorphic to) any graph in \eqref{xxxy}, then $\mu_{n-g}(G)<n-g+2$.
\end{enumerate}

Let $C:=v_1v_2\dots v_gv_1$ be a shortest cycle of $G$ and $S=V(G)\setminus V(C)$.

\noindent
{\bf Case 1.} There is some vertex in $S$, say $w$, which has no neighbors on $C$.

Note that $G$ is not any graph given in \eqref{xxxy}. It suffices to prove that $\mu_{n-g}(G)<n-g+2$.

As $G$ is connected, there is a vertex $v\in S$ that is adjacent to some vertex on $C$. Assume that $vv_1\in E(G)$. Let $H=G[V(C)\cup \{v,w\}]$ and let $B$ be the principal submatrix of $L(G)$ corresponding to vertices of $H$. Let $S'=S\setminus \{v,w\}$.
Then $B=L(H)+M$ with \[
M=\mbox{diag}(d_{S'}(v_1),\dots,d_{S'}(v_g),d_{S'}(v),d_{S'}(w)),
\]
Evidently, $\rho_1(M)=\max\{d_{S'}(z): z\in V(C)\cup \{v,w\}\}\le |S'|=n-g-2$.

\noindent
{\bf Case 1.1.} $d_C(v)=1$.

By Lemmas \ref{interlacing} and \ref{cw}, we have
\[
\mu_{n-g}(G)=\rho_{n-(g+2)+2}(L(G))\le \rho_{2}(B)\le \mu_2(H)+\rho_1(M)\le \mu_2(H)+n-g-2.
\]
Corresponding to the cases  $vw\notin E(G)$ or $vw\in E(G)$, we have $H\cong U_{g+1}\cup K_1$ or
$H\cong U_{g+2}'$, so we have $\mu_2(H)=\mu_2(U_{g+1}\cup K_1)=\mu_2(U_{g+1})$
or $\mu_2(H)=\mu_2(U_{g+2}')$.
By Lemma \ref{u1}, $\mu_2(H)<4$. So
\[
\mu_{n-g}(G)<4+n-g-2=n-g+2.
\]

\noindent
{\bf Case 1.2.}  $d_C(v)\ge 2$.

Assume that  $vv_i\in E(G)$ for some $i=2,\dots,g$. As $g>3$, we have $i=3,\dots,g-1$.
As $v_1\dots v_ivv_1$ is a cycle of length $i+1$ and
$v_1vv_i\dots v_gv_1$ is a cycle of length $g-i+3$, we have $i+1\ge g$ and $g-i+3\ge g$, so  $i=3$ and $g=4$. Then $H\cong K_{2,3}\cup K_1$ if $vw\notin E(G)$, and $H-vw\cong K_{2,3}\cup K_1$ if $vw\in E(G)$. By a direct calculation, $\mu_2(H)= 3$.
By Lemmas \ref{interlacing} and  \ref{cw},
\[
\mu_{n-g}(G)\le \rho_2(B)\le \mu_2(H)+\rho_1(M)\le 3+n-6=n-3<n-2=n-g+2,
\]

Combining Cases 1.1 and 1.2, we have $\mu_{n-g}(G)<n-g+2$, proving Item (ii).

\noindent
{\bf Case 2.} Every vertex in $S$ is adjacent to at least one vertex on $C$.

\noindent
{\bf Case 2.1.} There is a vertex in $S$, say $u$, adjacent to at least two vertices on $C$.

In this case, $G$ is not  any graph given in \eqref{xxxy} except $K_{2,4}$. If $G\cong K_{2,4}$, then
$n=6$, $g=4$, and we have
\[
\mu_{n-g+1}(G)=\mu_3(G)=2<4=n-g+2.
\]
Suppose that $G\not\cong K_{2,4}$.

Assume that $uv_1, uv_i\in E(G)$ for some $i=2,\dots,g$. As $g>3$, $i=3,\dots,g-1$. From $i+1\ge g$ and $g-i+3\ge g$, we have  $i=3$ and $g=4$. As $g\le n-2$, $S\setminus \{u\}\ne \emptyset$.

Suppose first that there is a vertex $z\in S\setminus \{u\}$ adjacent to  $v_2$ or $v_4$.  Let $H^*=G[V(C)\cup \{u,z\}]$ and $B^*$ be the principal submatrix of $L(G)$ corresponding to vertices of $H^*$. Note that  $H^*$ is a spanning subgraph of $K_{3,3}$ and $\sigma_L(K_{3,3})=\{6,3^{[4]},0\}$.  By Lemma \ref{cauchy},  $\mu_2(H^*)\le \mu_2(K_{3,3})= 3$. By Lemmas \ref{interlacing} and \ref{cw},
\[
\mu_{n-g}(G)=\rho_{n-6+2}(L(G))\le \rho_2(B^*)\le \mu_2(H^*)+n-6\le 3+n-6=n-3<n-g+2.
\]

Suppose next that each vertex in $S\setminus\{u\}$ is adjacent to exactly one of $v_1$ or $v_3$.

Suppose that $S$ is not independent.  Then there is a vertex $z_1\in N_S(v_1)$ adjacent to some $z_2\in N_S(v_3)$. It then follows that $z_1v_3,z_2v_1\notin E(G)$. Let $H=G[V(C)\cup \{u,z_1,z_2\}]$. Evidently, $H$ is the graph obtained from $K_{2,4}$ by subdivision of an edge. By a direct calculation, $\mu_3(H)=2.382<3$. Let $S''=S\setminus\{u,z_1, z_2\}$.
Denote by $B$  the principal submatrix of $L(G)$ corresponding to vertices of $H$. Then
\[
B=L(H)+\mbox{diag}(d_{S''}(v_1),\dots, d_{S''}(v_4), d_{S''}(u), d_{S''}(z_1), d_{S''}(z_2)).
\]
By Lemmas \ref{interlacing} and \ref{cw} that
\[
\mu_{n-g}(G)=\mu_{n-4}(G)\le\rho_3(B)\le\mu_3(H)+|S''|<3+n-7=n-4<n-g+2.
\]
Suppose that $S$ is independent. Then $G$ is a spanning subgraph of $K_{2,n-2}$. As $\sigma_L(K_{2,n-2})=\{n,n-2,2^{[n-3]},0\}$, we have
\[
\mu_{n-g}(G)\le \mu_{n-4}(K_{2,n-2})=2<n-g+2 \mbox{ if $n\ge 7$}.
\]
If $n=6$, then $G\cong  K_{2,4}^-$ (the graph obtained from $K_{2,4}$ by deleting an edge), so
\[
\mu_{n-g}(G)=\mu_2(G)=3.572<4=n-g+2.
\]

Now Items (i)and (ii) follow.

\noindent
{\bf Case 2.2.} Every vertex in $S$ adjacent to exactly one vertex on $C$.

In this case, $G\ncong K_{2,4}$.
Suppose first that
$G$ is one graph given in \eqref{xxxy}. 
Then $g=n-2$. Let $u$ and $z$ be the two vertices in $S$.
Denote by $v_i$ the neighbor of $z$ in $G$.
Note that
\[
G-zv_i\cong \begin{cases}
U_{g+1}\cup K_1&\mbox{ if }uz\notin E(G),\\
U_{g+2}'&\mbox{ if }uz\in E(G).
\end{cases}
\]
By  Lemmas \ref{cauchy} and \ref{u1},
 \[
\mu_{n-g+1}(G)=\mu_3(G)\le \mu_2(G-zv_i)\le \max\{\mu_2(U_{g+1}), \mu_2(U_{g+2}')\}<4=n-g+2,
\]
so Item (i) follows.

We are left to show Item (ii), which is the following.

\begin{Claim}\label{c3}
If $G$ is not given in \eqref{xxxy}, then $\mu_{n-g}(G)<n-g+2$.
\end{Claim}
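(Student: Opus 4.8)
The plan is to argue, exactly as in the earlier cases, through the interlacing inequality (Lemma~\ref{interlacing}) combined with the Weyl-type bound (Lemma~\ref{cw}). For any subset $T\subseteq S$, write $H=G[V(C)\cup T]$ (of order $g+|T|$) and let $B$ be the principal submatrix of $L(G)$ indexed by $V(H)$. Because every vertex of $S$ has exactly one neighbour on $C$, one still has $B=L(H)+M$ with $M$ diagonal and $\rho_1(M)\le |S\setminus T|=n-g-|T|$. Taking $i=|T|$ in Lemma~\ref{interlacing} and then $j=1$ in Lemma~\ref{cw} gives
\[
\mu_{n-g}(G)\le \rho_{|T|}(B)\le \mu_{|T|}(H)+n-g-|T|.
\]
Hence it suffices to produce a subset $T\subseteq S$ with $\mu_{|T|}(H)<|T|+2$; the natural target is $|T|=2$, where the requirement becomes $\mu_2(H)<4$.

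First I would dispose of $|S|=2$, i.e.\ $g=n-2$. Then $T=S$, so $H=G$ and $\mu_{n-g}(G)=\mu_2(G)$; in Case~2.2 such a $G$ is $C_{n-2}$ with two further vertices each joined to a single cycle vertex, so $G\cong Y_{n,i}$ for some $i$ when those two vertices are non-adjacent, and a low-girth graph otherwise. Since $G$ is assumed not to appear in \eqref{xxxy}, Lemmas~\ref{cal2} and~\ref{za} (which give $\mu_2(Y_{n,1}),\mu_2(Y_{n,2})<4$, plus the small cases checked there) yield $\mu_2(G)<4=n-g+2$. For $|S|\ge 3$ I would then look for two vertices $a,b\in S$ attached to the same cycle vertex, or to two adjacent cycle vertices while being non-adjacent to each other. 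Taking $T=\{a,b\}$ makes $H\cong Y_{g+2,1}$ or $H\cong Y_{g+2,2}$, and Lemmas~\ref{cal2} and~\ref{za} again give $\mu_2(H)<4$, closing this case.

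The remaining and more delicate situation is when the vertices of $S$ are spread out: no two of them share or occupy adjacent attachment points, and $S$ is independent. Then $G$ is a cycle with pendant vertices attached at pairwise non-adjacent cycle vertices, so its maximum degree is $3$, no two degree-$3$ vertices are adjacent, and (since an independent set of attachment points cannot exhaust the cycle) both a degree-$2$ and a degree-$3$ vertex occur. Applying Lemma~\ref{d1up} with $r,s\le 5$ gives $\mu_1(G)\le 5$, and because $G$ then has three distinct vertex degrees it is neither $P_4$ nor semi-regular bipartite, so the inequality is strict: $\mu_1(G)<5$. Consequently $\mu_{n-g}(G)\le \mu_1(G)<5\le n-g+2$ for every $|S|\ge 3$, finishing this branch at once.

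The main obstacle is the configurations in which $S$ is not independent. An edge inside $S$ joins two vertices whose attachment points lie at cycle-distance $d$ with $g-3\le d\le 3$, which forces $g\le 6$; thus these graphs have bounded girth but unbounded order and cannot be settled by a finite check. I would treat them by the same interlacing scheme, choosing $T$ (and hence $H$) so as to absorb the relevant $S$-edges into a controlled subgraph on the cycle plus a bounded number of extra vertices, once more arranging $\mu_{|T|}(H)<|T|+2$, and supplementing this with the degree bound of Lemma~\ref{d1up} whenever the maximum degree stays small. Verifying that every such low-girth graph satisfies the strict inequality, and in particular that none of them yields a new extremal graph outside \eqref{xxxy}, is where the bulk of the technical effort will lie.
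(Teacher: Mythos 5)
Your treatment of the case where $S$ is independent is essentially sound and close to the paper's: the interlacing/Weyl reduction $\mu_{n-g}(G)\le\mu_{|T|}(H)+n-g-|T|$ is set up correctly, the $|S|=2$ non-adjacent subcase is delegated to Lemmas \ref{cal2} and \ref{za} exactly as in the paper, two $S$-vertices with equal or adjacent attachment points are absorbed into $Y_{g+2,1}$ or $Y_{g+2,2}$, and your ``spread-out'' argument via Lemma \ref{d1up} applied to $G$ itself (maximum degree $3$, no two degree-$3$ vertices adjacent, three distinct degrees ruling out equality) is a clean alternative to the paper's edge-deletion to $U_{g+1}\cup(n-g-1)K_1$. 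However, there is a genuine gap: the case where $S$ is \emph{not} independent is never actually proved. You correctly observe that an edge inside $S$ forces $g\le 6$, but then only announce a plan (``choose $T$ so as to absorb the relevant $S$-edges \dots where the bulk of the technical effort will lie''). In the paper this is precisely the substantive part of the claim: one reduces to $(j,g)\in\{(2,4),(3,4),(3,5),(4,6)\}$ and verifies, configuration by configuration, that $\mu_2$ of the graph $R_1$, respectively $\mu_3$ of an induced subgraph $H_2$ on $V(C)\cup\{u_1,u_2,u_3\}$ (the graphs $R_2,R_3,F_0,F_1,F_2$ and a few degenerate spanning-subgraph cases), is small enough; none of this is replaceable by a soft degree argument, since these graphs contain adjacent vertices of degree $3$.

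Moreover, one specific assertion you make is false and cannot simply be deferred: in the $|S|=2$ subcase you dismiss the possibility that the two vertices of $S$ are adjacent as producing ``a low-girth graph''. For $g\le 6$ this is not so: for example, with $n=8$, $g=6$, take $C_6=v_1\dots v_6$, $u_1\sim v_1$, $u_2\sim v_4$ and $u_1u_2\in E(G)$; every cycle of this graph has length $6$, each vertex of $S$ has exactly one neighbour on $C$, and the graph is not in \eqref{xxxy}, so the claim must cover it (similarly for $g=5$, $n=7$ with attachment points at distance $2$). Your proposal as written has no argument for these theta-type graphs (they do not fall under Lemmas \ref{cal2}, \ref{za}, nor under your degree bound, since they contain an edge joining two degree-$3$ vertices, giving $r=6$ in Lemma \ref{d1up}). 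So the non-independent branch, including these small-$n$ configurations, must be worked out explicitly before the proof is complete.
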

\begin{proof}
Suppose that $S$ is independent. Suppose that $g=n-2$. Then
$G\cong Y_{n,i}=C_{n-2}+u_1w+u_iz$ for some $i=1,\dots,\lceil\frac{n-1}{2} \rceil$,  $\{w, z\}=S$.
As $G$ is not isomorphic to any graph given in \eqref{xxxy},
$G\cong Y_{n,1}, Y_{n,2}, Y_{6,3}, Y_{8,3}$. By Lemmas \ref{cal2} and \ref{za},
\[
\mu_{n-g}(G)=\mu_2(G)<4=n-g+2.
\]
Suppose  that $g\le n-3$.
If there is no vertex on $C$ with at least two neighbors in $S$, then the set of edges outside $C$ is  a matching of size $n-g$. Let $E_1$ be a subset of this matching with $|E_1|=n-g-1$. Then $G-E_1\cong U_{g+1}\cup (n-g-1)K_1$. So by Lemmas \ref{cauchy} and \ref{d1up},
\[
\mu_{n-g}(G)\le \mu_1(G-E_1)=\mu_1(U_{g+1})<5\le n-g+2.
\]
Suppose that there is a vertex on $C$, say $v_1$, having at least two neighbors $w_1,w_2\in S$. Let $w_3\in S\setminus \{w_1,w_2\}$ and let $v_j$ be the neighbor of $w_3$ on $C$. Let $H_1=G[(V(C)\setminus \{v_1\})\cup \{w_1,w_2,w_3\}]$. Note that $H_1\cong P_{g-1}\cup 3K_1$ if $j=1$ and $H_1-v_jw_3\cong P_{g-1}\cup 3K_1$ if $j\ne 1$. By Lemmas \ref{cauchy} and \ref{sigma} (i), we have
 $\mu_2(H_1)\le \mu_1(H_1)=\mu_1(P_{g-1})<4$ or $\mu_2(H_1)\le \mu_1(H_1-v_jw_3)=\mu_1(P_{g-1})<4$.
Denoting by $B_1$ the principal submatrix of $L(G)$ corresponding to vertices of $H_1$, we have by Lemmas \ref{interlacing} and \ref{cw} that \[
\mu_{n-g}(G)\le \rho_2(B_1)\le  \mu_2(H_1)+n-g-2<n-g+2.
\]

Suppose that $S$ is not independent, say $u_1u_2\in E(G)$ with $u_1,u_2\in S$. Assume that $u_1v_1,u_2v_j\in E(G)$ with $j-1\le g+1-j$. As $j+2\ge g$, we have $(j,g)=(2,4),(3,4),(3,5)$ or $(4,6)$. If $(j,g)=(2,4)$, then $G[V(C)\cup \{u_1,u_2\}]\cong R_1$ (see Fig. \ref{r123}), so by Lemmas \ref{interlacing} and \ref{cw} and the fact that $\mu_2(R_1)=3$, we have \[
\mu_{n-g}(G)\le \mu_2(R_1)+n-6= n-3<n-g+2.
\]
Suppose in the following that $(j,g)\ne (2,4)$.
As $G\ncong K_{2,3}^*,K_{2,3}^{**},K_{2,3}^{***}$, $n\ge g+3$. Let $u_3\in S\setminus \{u_1,u_2\}$ and  $H_2=G[V(C)\cup \{u_1,u_2,u_3\}]$. Let $B_2$ be the principal submatrix of $L(G)$ corresponding to vertices of $H_2$. By Lemmas \ref{interlacing} and \ref{cw},
\[
\mu_{n-g}(G)\le \rho_{3}(B_2)\le \mu_3(H_2)+n-g-3.
\]
In the following we show that $\mu_3(H_2)<5$.

\begin{figure}[htbp]
\centering
\begin{tikzpicture}
\centering
\filldraw [black] (3.5,1) circle (2pt);
\filldraw [black] (3.5,0) circle (2pt);
\filldraw [black] (2.5,1) circle (2pt);
\filldraw [black] (2.5,0) circle (2pt);
\filldraw [black] (4.5,1) circle (2pt);
\filldraw [black] (4.5,0) circle (2pt);
\draw[black] (2.5,1)--(4.5,1)--(4.5,0)--(2.5,0)--(2.5,1);
\draw[black] (3.5,1)--(3.5,0);
\node at (3.5,-0.5) {\small $R_1$};
\filldraw [black] (7,1) circle (2pt);
\filldraw [black] (7,0) circle (2pt);
\filldraw [black] (8,1) circle (2pt);
\filldraw [black] (8,0) circle (2pt);
\filldraw [black] (6,1) circle (2pt);
\filldraw [black] (6,0) circle (2pt);
\filldraw [black] (7,1.5) circle (2pt);
\draw[black] (6,1)--(8,1)--(8,0)--(6,0)--(6,1);
\draw[black] (7,1)--(7,0);
\draw[black] (6,1)--(7,1.5)--(8,0);
\node at (7,-0.5) {\small $R_2$};
\filldraw [black] (9.5,0.5) circle (2pt);
\filldraw [black] (10.5,0.5) circle (2pt);
\filldraw [black] (10,0) circle (2pt);
\filldraw [black] (10,1) circle (2pt);
\filldraw [black] (11.5,0) circle (2pt);
\filldraw [black] (11.5,1) circle (2pt);
\filldraw [black] (10.8,1.5) circle (2pt);
\draw[black] (9.5,0.5)--(10,0)--(10.5,0.5)--(10,1)--(9.5,0.5);
\draw[black] (10,0)--(11.5,0)--(11.5,1)--(10,1)--(10.8,1.5)--(11.5,0);
\node at (10.5,-0.5) {\small $R_3$};
\end{tikzpicture}
\caption{The graphs $R_1$, $R_2$ and $R_3$.}
\label{r123}
\end{figure}
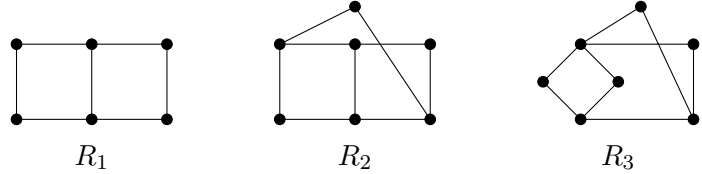

Suppose that $(j,g)=(3,4)$.
Recall that $u_3$ is adjacent to exactly one vertex on $C$.
Then $H_2$ is isomorphic to a spanning subgraph of $R_2$ when $u_3v_2\in E(G)$ or $u_3v_4\in E(G)$, and a spanning subgraph of $R_3$ (see Fig. \ref{r123}) when $u_3v_1\in E(G)$ or $u_3v_3\in E(G)$. By Lemma \ref{cauchy} and a direct calculation,
$\mu_3(H_2)\le \max\{\mu_3(R_i): i=2,3\}<5$.

Suppose that $(j,g)=(3,5)$. If $u_3$ is adjacent to $u_1$ or $u_2$, say $u_1\in E(G)$, then $u_2u_3\notin E(G)$ and $N_C(u_3)=\{v_4\}$ as $G$ has no cycle of length $3$ or $4$, which implies that $H_2\cong F_0$ (see Fig. \ref{f12}) and $\mu_3(H_2)=4<5$ by a direct calculation. If $u_3$ is not adjacent to either of $u_1$ and $u_2$, then $H_2-v_2\cong U_7$ if $u_3v_2\notin E(G)$ and $H_2-v_2\cong C_6\cup K_1$ if $u_3v_2\in E(G)$, which follows by Lemmas \ref{cw}, \ref{sigma} (ii) and \ref{u1} that $\mu_3(H_2)\le \mu_2(H_2-v_2)+1<5$.

Suppose that $(j,g)=(4,6)$. As $G$ has no cycle of length $4,5$, $u_3$ can not be  adjacent to $u_1$ or $u_2$. Then $H_3$ is isomorphic to $F_1$ or $F_2$ in Fig. \ref{f12} and so $\mu_3(H_2)<5$ by a direct calculation.

\begin{figure}[htbp]
\centering
\begin{tikzpicture}
\filldraw [black] (-5,1) circle (2pt);
\filldraw [black] (-4.2,0.1) circle (2pt);
\filldraw [black] (-4.5,-1) circle (2pt);
\filldraw [black] (-5.8,0.1) circle (2pt);
\filldraw [black] (-5.5,-1) circle (2pt);
\filldraw [black] (-6.5,0.2) circle (2pt);
\filldraw [black] (-3.5,-0.3) circle (2pt);
\filldraw [black] (-3.5,0.7) circle (2pt);
\draw[black] (-5,1)--(-4.2,0.1)--(-4.5,-1)--(-5.5,-1)--(-5.8,0.1)--(-5,1)--(-3.5,0.7)--(-3.5,-0.3)--(-4.5,-1);
\draw[black](-3.5,0.7)--(-6.5,0.2)--(-5.5,-1);
\node at(-5,-1.4) {\small $F_0$};
\centering
\filldraw [black] (0,1) circle (2pt);
\filldraw [black] (0,-1) circle (2pt);
\filldraw [black] (0.6,0.3) circle (2pt);
\filldraw [black] (0.6,-0.3) circle (2pt);
\filldraw [black] (-0.6,0.3) circle (2pt);
\filldraw [black] (-0.6,-0.3) circle (2pt);
\filldraw [black] (1.2,0.3) circle (2pt);
\filldraw [black] (1.2,-0.3) circle (2pt);
\filldraw [black] (0.8,1) circle (2pt);
\draw[black] (0,1)--(0.6,0.3)--(0.6,-0.3)--(0,-1)--(-0.6,-0.3)--(-0.6,0.3)--(0,1)--(0.8,1);
\draw[black] (0,1)--(1.2,0.3)--(1.2,-0.3)--(0,-1);
\node at(0,-1.4) {\small $F_1$};
\filldraw [black] (5,1) circle (2pt);
\filldraw [black] (5,-1) circle (2pt);
\filldraw [black] (5.6,0.3) circle (2pt);
\filldraw [black] (5.6,-0.3) circle (2pt);
\filldraw [black] (4.4,0.3) circle (2pt);
\filldraw [black] (4.4,-0.3) circle (2pt);
\filldraw [black] (6.2,0.3) circle (2pt);
\filldraw [black] (6.2,-0.3) circle (2pt);
\filldraw [black] (3.6,0.3) circle (2pt);
\draw[black] (5,1)--(5.6,0.3)--(5.6,-0.3)--(5,-1)--(4.4,-0.3)--(4.4,0.3)--(5,1);
\draw[black] (5,1)--(6.2,0.3)--(6.2,-0.3)--(5,-1);
\draw[black] (4.4,0.3)--(3.6,0.3);
\node at(5,-1.4) {\small $F_2$};
\end{tikzpicture}
\caption{The graphs $F_0$, $F_1$ and $F_2$.}
\label{f12}
\end{figure}
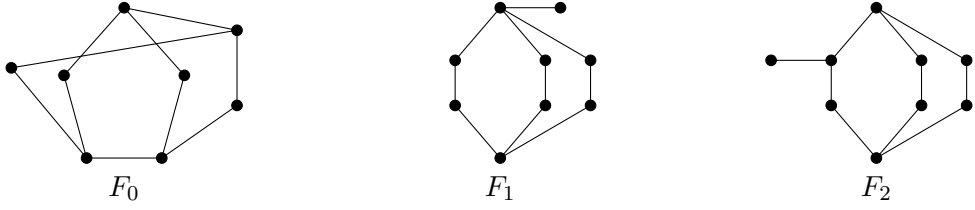

Now we have proved that $\mu_3(H_2)<5$, so $\mu_{n-g}(G)\le \mu_3(H_2)+n-g-3<n-g+2$.
\end{proof}

This completes the proof.
\end{proof}

By Theorem \ref{XuL}, if $G$ a connected graph of order $n$ with girth $g$, where $4\le g\le n-2$, then
\[
m_G[n-g+3,n]\le n-g-1.
\]
By Lemmas \ref{d1} and \ref{cal2}, $m_{Y_{n,1}}[n-g+3,n]=n-g-1$. Moreover, for $g\ge 5$, it may be proved that
$Y_{n,1}$ is the unique graph achieving this bound.

\section{Proof of Theorem \ref{Gen} for $k\ge 3$}

In this section, we establish Theorem~\ref{Gen} for  $k\ge 3$, which is  stated as follows.

\begin{theorem} \label{Gen22}
Let $G$ be a connected graph of order $n$ with girth $g$. For $k=3,\dots,g-1$, if $4\le g\le n-k$, then
\[
m_G[n-g-k+4,n]\le n-g.
\]
\end{theorem}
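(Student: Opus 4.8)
The plan is to transfer the bound to a bounded–size induced subgraph built around a shortest cycle, and then to control that subgraph through a single well–chosen principal submatrix. Write $C=v_1\cdots v_gv_1$ for a shortest cycle and $S=V(G)\setminus V(C)$, so $|S|=n-g$. The assertion $m_G[n-g-k+4,n]\le n-g$ is equivalent to $\mu_{n-g+1}(G)<n-g-k+4$, and the hypothesis $g\le n-k$ guarantees $n-g-k+4\ge 4$. Fix any $T\subseteq S$ with $|T|=k$ and set $H=G[V(C)\cup T]$, a graph on $g+k$ vertices. The principal submatrix $B$ of $L(G)$ indexed by $V(H)$ satisfies $B=L(H)+M$ with $M=\mathrm{diag}(d_{S\setminus T}(v))_{v\in V(H)}$ and $\rho_1(M)\le |S\setminus T|=n-g-k$. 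Applying Lemma~\ref{interlacing} (with $p=g+k$ and $i=k+1$) and then Lemma~\ref{cw},
\[
\mu_{n-g+1}(G)=\rho_{n-g+1}(L(G))\le \rho_{k+1}(B)\le \mu_{k+1}(H)+\rho_1(M)\le \mu_{k+1}(H)+(n-g-k).
\]
Thus everything reduces to the single inequality $\mu_{k+1}(H)<4$.

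To bound $\mu_{k+1}(H)$ I would invoke Lemma~\ref{interlacing} once more, now for $L(H)$: if $W\subseteq V(H)$ has $|W|=g$, then $\mu_{k+1}(H)=\rho_{k+1}(L(H))\le \rho_1(L(H)[W])=\mu_1(L(H)[W])$. So it suffices to produce a set $W$ of $g$ vertices with $\mu_1(L(H)[W])<4$, equivalently with $(4I-L(H))[W]$ positive definite. Since $L(H)[W]=\mathrm{diag}(d_H(v))_{v\in W}-A(H[W])$, two clean sufficient conditions are available. Either (a) $H[W]$ is a disjoint union of paths and every $v\in W$ has $d_H(v)\le 2$, in which case each path block is dominated in the Loewner order by $2I-A(P_\ell)$, whose largest eigenvalue is $2+2\cos\frac{\pi}{\ell+1}<4$ by Lemma~\ref{sigma}(iii); or (b) $W$ is independent and every $v\in W$ has $d_H(v)\le 3$, in which case $L(H)[W]=\mathrm{diag}(d_H(v))$ has largest eigenvalue at most $3<4$.

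The heart of the argument, and the step I expect to be the main obstacle, is showing that such a $W$ of size exactly $g$ always exists; here the girth hypothesis is essential. Since $|T|=k\le g-1<g$, the induced subgraph $H[T]$ has no cycle, i.e.\ it is a forest, and every cycle of $H$ has length $\ge g$, so as $g+k<2g$ the degree–$\le 2$ part of $H$ contains at most one cycle. For $g\ge 5$ every vertex of $S$ has at most one neighbour on $C$ (two would close a cycle of length $<g$), so the attachments are pendant–like; deleting the few vertices of degree $\ge 3$ together with one vertex of any surviving low–degree cycle leaves a union of paths on $\ge g$ vertices to which condition (a) applies, the removal of a cycle vertex simultaneously disposing of the even–cycle eigenvalue $4$. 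For $g=4$ a vertex of $S$ may instead be joined to an antipodal pair of $C_4$, creating complete–bipartite clusters such as $K_{2,m}$ or $K_{3,3}$; there I would fall back on condition (b), taking $W$ inside the large colour class, exactly as the explicit spectra of $K_{2,m}$ and $K_{3,3}$ are used for $k=1,2$. Making this existence statement precise — verifying uniformly, over all attachment patterns compatible with girth $g\ge 4$, that the high–degree vertices and the at most one low–degree cycle can be destroyed within a budget leaving $g$ vertices satisfying (a) or (b) — is where the work lies, and I expect it to need a short case analysis driven by how the $k$ outside vertices attach to $C$, with the triangle–free constraint keeping the number of high–degree vertices under control.
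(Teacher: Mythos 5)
Your opening reduction is exactly the paper's: choosing $H=G[V(C)\cup T]$ with $|T|=k$, writing the principal submatrix as $B=L(H)+M$, and using Lemmas \ref{interlacing} and \ref{cw} to get $\mu_{n-g+1}(G)\le\mu_{k+1}(H)+n-g-k$, so that the whole theorem reduces to $\mu_{k+1}(H)<4$. Where you diverge is in how to prove $\mu_{k+1}(H)<4$, and this is precisely where your proposal has a genuine gap: the existence of a $g$-subset $W$ satisfying your condition (a) or (b) is the entire content of the proof, you explicitly leave it unproven, and in fact, as stated, it is false. Take $g=4$, $k=3$ and let $H$ be the graph obtained from $K_{3,3}$ by subdividing one edge (this configuration really occurs: take $G=H$, $n=7$, girth $4$, a $4$-cycle $a_1b_1a_2b_2$ as $C$, and $S=\{a_3,b_3,c\}$ where $c$ is the subdivision vertex). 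In this $H$ the only vertex of $H$-degree at most $2$ is $c$, so condition (a) cannot supply four vertices, and the independence number of $H$ is $3$, so condition (b) cannot either. A good $W$ does exist — $W=\{a_1,a_2,a_3,c\}$ gives $\mu_1\bigl(L(H)[W]\bigr)=\frac{5+\sqrt5}{2}<4$ — but it satisfies neither of your two sufficient conditions, so the scheme "always find $W$ obeying (a) or (b)" cannot be completed as announced; it would have to be replaced by a more flexible criterion and a full case analysis that you have not carried out. Your sketch for $g\ge5$ ("delete the few vertices of degree $\ge3$ together with one vertex of any surviving low-degree cycle within the budget $k$") is likewise only a plan: whether the remaining $g$ vertices induce a union of paths depends delicately on how the outside vertices attach, which is exactly the case analysis you defer.

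For comparison, the paper finishes the argument differently: for $g=4$ it observes that $H$ is a spanning subgraph of $K_{2,5}$, $K_{3,4}$, or of a one-edge subdivision of $K_{2,4}$ or $K_{3,3}$, and reads off $\mu_4(H)\le 3<4$ from these explicit spectra via Lemma \ref{cauchy} (this is how the subdivided $K_{3,3}$ above is handled); for $g\ge5$ it deletes at most $k$ well-chosen \emph{edges} of $H$ (again Lemma \ref{cauchy}) so that what remains is a union of a cycle, paths, stars and small components, plus a handful of exceptional graphs ($Q_1$--$Q_4$, $F_0$--$F_2$, $U'$-type graphs) whose relevant eigenvalues are computed directly or bounded by Lemmas \ref{sigma} and \ref{u1}. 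So your vertex-deletion idea is a legitimate alternative route in principle, but as written it is not a proof: the key existence statement is missing, and the two sufficient conditions you propose to verify it are demonstrably insufficient already at $g=4$, $k=3$.
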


\begin{proof}
Let $C:=v_1v_2\dots v_gv_1$ be a shortest cycle of $G$ and $S\subseteq V(G)\setminus V(C)$ with $|S|=k$. Let $H=G[V(C)\cup S]$. Let $B$ be the principal submatrix of $L(G)$ corresponding to vertices of $H$. Then $B=L(H)+M$, where
$M=\mbox{diag}\{d_{S'}(v):v\in V(C)\cup S\}$ with $S'=V(G)\setminus (V(C)\cup S)$. Evidently, $\rho_1(M)=\max\{d_{S'}(v): v\in V(C)\cup S\}\le |S'|=n-g-k$.

By Lemmas \ref{interlacing} and \ref{cw}, \[
\mu_{n-g+1}(G)=\rho_{n-(g+k)+k+1}(L(G))\le \rho_{k+1}(B)\le \mu_{k+1}(H)+\rho_1(M')\le \mu_{k+1}(H)+n-g-k.
\]
As $m_G[n-g-k+4,n]\le n-g$ if and only if $\mu_{n-g+1}(G)<n-g-k+4$, it suffices to show $\mu_{k+1}(H)<4$.

Suppose that $g=4$. Then $k=3$ and $H$ is a graph on $7$ vertices. If $H$ is bipartite, then
$H$ is a spanning subgraph of $K_{2,5}$ or $K_{3,4}$, as $\mu_4(K_{2,5})=2$ and $\mu_4(K_{3,4})=3$,
we have by Lemma \ref{cauchy} that $\mu_4(H)\le \max\{\mu_4(K_{2,5}), \mu_4(K_{3,4})\}<4$. Suppose that $H$ is not bipartite. Then $H$ contains cycles of length $5$. Then $H$ is a spanning subgraph of a graph obtained from $K_{2,4}$ by subdividing an edge, or from $K_{3,3}$ by subdividing an edge. So by Lemma \ref{cauchy}, $\mu_4(H)\le 2<4$ in the former case, and  $\mu_4(H)\le 3<4$ in the latter case.

Suppose that $g\ge 5$.
If there is a vertex, say $u\in S$, has two neighbors $v_1$ and $v_\ell$ on $C$, then $\ell+1\ge g$ and $g-\ell+3\ge g$, which implies that $g=4$, a contradiction.
So $d_C(v)\le 1$ for any $v\in S$.
Let
\begin{align*}
S_0&=\{v\in S: d_C(v)=1, d_S(v)=0\},\\
S_1&=\{v\in S: d_C(v)=1, d_S(v)\ge 1\},\\
S_2&=\{v\in S: d_C(v)=0, d_{S_1}(v)\ge 1\},\\
S_3&=S\setminus(S_0\cup S_1\cup S_2).
\end{align*}
Then $S_0$ is an independent set and $S_3$ contains precisely the vertices in $S$ that are not adjacent to any vertex on $C$ and in $S_1$.

\noindent
{\bf Case 1.} $S_1$ is independent.

Suppose that $S_1=\emptyset$. Then $S_2=\emptyset$. As $k\le g-1$, $G[S]=G[S_0\cup S_3]$ is a forest, so $|E(S)|\le |S_3|-1$. Let $E_0=E(S)\cup E(C,S)$. Then $|E_0|\le |S_3|-1+|S_0|\le k-1$ and $H-E_0\cong C_g\cup kK_1$.  By Lemmas \ref{cauchy} and \ref{sigma} (ii), \[
\mu_{k+1}(H)\le \mu_{k+1-|E_0|}(H-E_0)\le \mu_2(H-E_0)=\mu_2(C_g)<4.
\]

Suppose in the following that $S_1\ne \emptyset$.

\noindent
{\bf Case 1.1.} For any $v\in S_2$, $d_{S_1}(v)\le 2$.

As $S_1$ is independent, $N_S(w)\subseteq S_2$ for any $w\in S_1$. Assume that  $S_1=\{w_1,\dots,w_t\}$. Let $z_i$ be a neighbor of $w_i$ in $S_2$ and $E_1'=\{w_iz_i:i=1,\dots,t\}$.
Let $E_1=E(C,S)\cup E(S)\setminus E_1'$.
As $k\le g-1$, $|E(S)|\le k-|S_0|-1$, so $|E_1|=|E(C,S)|+|E(S)|-t\le k-1$. Note that $H-E_1\cong C_g\cup G[E_1']\cup (k-2t)K_1$ and the components $G[E_1']$ are $K_2$ or $K_{1,2}$. By Lemmas \ref{cauchy} and \ref{sigma} (ii) again, \[
\mu_{k+1}(H)\le \mu_2(H-E_1)\le \max\{\mu_2(C_g),\mu_1(K_2),\mu_1(K_{1,2})\}<4.
\]

\noindent
{\bf Case 1.2.} For some  $w\in S_2$, $d_{S_1}(w)\ge 3$.

Note that $k\ge 4$.
Let $u_1$, $u_2$ and $u_3$ be neighbors of $w$ in $S_1$. Assume that $u_1v_1\in E(G)$, $u_2v_i\in E(G)$ and $u_3v_j\in E(G)$  with $1\le i\le j\le g$.
As $g\ge 5$,  $1< i< j\le g$.
Assume that $i-1\le \min\{j-i,g+1-j\}$. Then $i\le \frac{g}{3}+1$, so $g\le i+3\le \frac{g}{3}+1+3$, implying that $g\le 6$.

Suppose that $g=5$. Then $i=2$, $j=3,4$, $k=4$ and $S=\{u_1,u_2,u_3,w\}$, so $H\cong Q_1,Q_2$ (see Fig. \ref{figq}). By a direct calculation, $\mu_5(Q_1)=2.555$ and $\mu_5(Q_2)=2$. So $\mu_5(H)<4$.

Suppose that $g=6$. Then $i=3$,  $j=5$, and $k=4,5$. If $k=4$, i.e., $S=\{u_1,u_2,u_3,w\}$, then $H\cong Q_3$ (see Fig. \ref{figq}), so we have by a direct calculation that  $\mu_5(H)=2<4$.  Otherwise, $k=5$, so there is exactly one vertex $z$ in $S$ different from $u_1,u_2,u_3$ and $w$ so that  $H-z\cong Q_3$. By Lemma \ref{interlacing}, $\mu_6(H)\le \mu_5(Q_3)+1=3<4$.

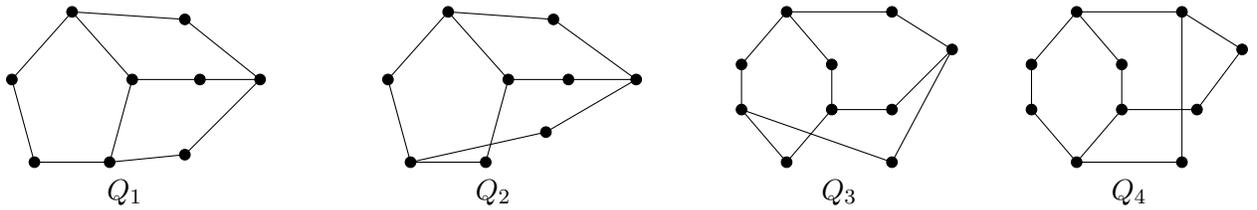
\begin{figure}[htbp]
\begin{minipage}[left]{.73\textwidth}
\begin{tikzpicture}
\filldraw [black] (-5,1) circle (2pt);
\filldraw [black] (-4.2,0.1) circle (2pt);
\filldraw [black] (-4.5,-1) circle (2pt);
\filldraw [black] (-5.8,0.1) circle (2pt);
\filldraw [black] (-5.5,-1) circle (2pt);
\filldraw [black] (-3.3,0.1) circle (2pt);
\filldraw [black] (-3.5,0.9) circle (2pt);
\filldraw [black] (-2.5,0.1) circle (2pt);
\filldraw [black] (-3.5,-0.9) circle (2pt);
\draw[black] (-2.5,0.1)--(-3.5,0.9)--(-5,1)--(-4.2,0.1)--(-4.5,-1)--(-5.5,-1)--(-5.8,0.1)--(-5,1);
\draw[black] (-2.5,0.1)--(-3.3,0.1)--(-4.2,0.1);
\draw[black] (-2.5,0.1)--(-3.5,-0.9)--(-4.5,-1);
\node at(-4.3,-1.4) {\small $Q_1$};
\filldraw [black] (0,1) circle (2pt);
\filldraw [black] (0.8,0.1) circle (2pt);
\filldraw [black] (0.5,-1) circle (2pt);
\filldraw [black] (-0.8,0.1) circle (2pt);
\filldraw [black] (-0.5,-1) circle (2pt);
\filldraw [black] (1.6,0.1) circle (2pt);
\filldraw [black] (1.4,0.9) circle (2pt);
\filldraw [black] (2.5,0.1) circle (2pt);
\filldraw [black] (1.3,-0.6) circle (2pt);
\draw[black] (0,1)--(0.8,0.1)--(0.5,-1)--(-0.5,-1)--(-0.8,0.1)--(0,1)--(1.4,0.9)--(2.5,0.1)--(1.6,0.1)--(0.8,0.1);
\draw[black] (-0.5,-1)--(1.3,-0.6)--(2.5,0.1);
\node at(0.6,-1.4) {\small $Q_2$};
\filldraw [black] (4.5,1) circle (2pt);
\filldraw [black] (4.5,-1) circle (2pt);
\filldraw [black] (5.1,0.3) circle (2pt);
\filldraw [black] (5.1,-0.3) circle (2pt);
\filldraw [black] (3.9,0.3) circle (2pt);
\filldraw [black] (3.9,-0.3) circle (2pt);
\filldraw [black] (5.9,1) circle (2pt);
\filldraw [black] (5.9,-0.3) circle (2pt);
\filldraw [black] (6.7,0.5) circle (2pt);
\filldraw [black] (5.9,-1) circle (2pt);
\draw[black] (4.5,1)--(5.1,0.3)--(5.1,-0.3)--(4.5,-1)--(3.9,-0.3)--(3.9,0.3)--(4.5,1);
\draw[black] (4.5,1)--(5.9,1)--(6.7,0.5)--(5.9,-0.3)--(5.1,-0.3);
\draw[black] (3.9,-0.3)--(5.9,-1)--(6.7,0.5);
\node at(5.2,-1.4) {\small $Q_3$};
\end{tikzpicture}
\end{minipage}
\begin{minipage}{.262\textwidth}
\hfill
\begin{tikzpicture}
\filldraw [black] (0,1) circle (2pt);
\filldraw [black] (0,-1) circle (2pt);
\filldraw [black] (0.6,0.3) circle (2pt);
\filldraw [black] (0.6,-0.3) circle (2pt);
\filldraw [black] (-0.6,0.3) circle (2pt);
\filldraw [black] (-0.6,-0.3) circle (2pt);
\filldraw [black] (1.4,1) circle (2pt);
\filldraw [black] (1.6,-0.3) circle (2pt);
\filldraw [black] (2.2,0.5) circle (2pt);
\filldraw [black] (1.4,-1) circle (2pt);
\draw[black] (0,1)--(0.6,0.3)--(0.6,-0.3)--(0,-1)--(-0.6,-0.3)--(-0.6,0.3)--(0,1);
\draw[black] (0,1)--(1.4,1)--(2.2,0.5)--(1.6,-0.3)--(0.6,-0.3);
\draw[black] (0,-1)--(1.4,-1)--(1.4,1);
\node at(0.7,-1.4) {\small $Q_4$};
\end{tikzpicture}
\end{minipage}
\caption{The graph $Q_1$, $Q_2$, $Q_3$ and $Q_4$.}
\label{figq}
\end{figure}

\noindent
{\bf Case 2.} $S_1$ is not independent.

In this case, assume that $u_1u_2\in E(G)$ with $u_1,u_2\in S$. Assume that $u_1v_1\in E(G)$ and $u_2v_i\in E(G)$. Assume that $i-1\le g+1-i$, i.e., $2i-2\le g$.  Then $i+2\ge g$ and $g-i+4\ge g$, so $i\le 4$ and $g\le 6$. As $g\ge 5$, we have $(i,g)=(4,6), (3,5)$.

\noindent
{\bf Case 2.1.} $(i, g)=(4,6)$.

Let $S_1'=\{v\in S_1:d_{S_1}(v)\ge 1\}$. Then $d_{S_1}(v)=1$ for each $v\in S_1'$. Otherwise, assume that $u_1$ has another neighbor $u_3\in S_1$.  As $g=6$, any vertex on $C$ can not be a neighbor of $u_3$, a contradiction.

Suppose that $S_2=\emptyset$. Then $S_1=S_1'$. Let $E_1=E(C,S)\cup E(S_3)$. As $g\le k-1$, $|E(S_3)|\le |S_3|-1$ and $|E_1|=|S_0|+|S_1|+|S_3|-1\le k-1$. Note that $H-E_1\cong C_6\cup \frac{|S_1|}{2}K_2\cup (k-|S_1|)K_1$. By Lemmas \ref{cauchy} and \ref{sigma} (ii), \[
\mu_{k+1}(H)\le \mu_2(H-E_1)=\mu_2(C_6)<4.
\]

Suppose that $S_2\ne \emptyset$. Any  $z\in S_2$  is adjacent to at most one of $u_1,u_2$.

Suppose that  for some $z\in S_2$, $d_{S_1}(z)\ge 2$.
If $N_{S_1}(z)\cap S_1'=\emptyset$, then, as $k\le g-1=5$, we have $k=5$, $d_{S_1}(z)=2$ and $S=\{u_1,u_2,z\}\cup N_{S_1}(z)$, so $|E(C,S)|=4$ and $H-E(C,S)\cong C_6\cup K_2\cup K_{1,2}$. So by Lemmas \ref{cauchy} and \ref{sigma} (ii), \[
\mu_{k+1}(H)=\mu_6(H)\le \mu_2(H-E(C,S))=\mu_2(C_6)<4.
\]
Suppose that  $N_{S_1}(z)\cap S_1'\ne \emptyset$.  Assume that $u_1z\in E(G)$. Let $u_3$ be the other neighbor of $z$ and let $v_j$ be the  neighbor of $u_3$ on $C$. Then $j=3,5$,
and $H[V(C)\cup \{u_1,u_2,u_3,z\}]\cong Q_4$
(see Fig. \ref{figq}). If $S=\{u_1,u_2,u_3,z\}$, then $\mu_{k+1}(H)=\mu_5(Q_4)=2.746<4$. Otherwise, there is exactly one vertex $z'$ in $S$ different from $u_1,u_2,u_3$ and $z$ such that $H-z'\cong Q_4$. It then follows by Lemma \ref{cw} that $\mu_6(H)\le \mu_5(Q_4)+1=3.746<4$.

Suppose that $d_{S_1}(w)=1$ for any $w\in S_2$. Let $E_2=(E(C,S)\setminus \{u_1v_1\})\cup E(S_1,S_2)$. Note that $|E_2|=|S_0|+|S_1|-1+|S_2|\le k-1$ and $H-E_2\cong U_8'\cup H'$, where $H'$ is a spanning subgraph of $G[S\setminus \{u_1,u_2\}]$. As $k\le 5$, $|V(H')|\le 3$.
By Lemmas \ref{cauchy} and \ref{u1},
\[
\mu_{k+1}(H)\le \mu_2(H-E_2)=\max\{\mu_2(U_8'),\mu_1(H')\}<4.
\]

\noindent
{\bf Case 2.2.} $(i, g)=(3,5)$.

Note that $k=3,4$, $|E(C,S)|\le k$ and $H-E(C,S)\cong C_5\cup G[S]$. If $d_S(v)\le 2$ for any $v\in S$, then, as $G[S]$ is a forest, $\mu_1(G[S])<4$, so we have by Lemmas \ref{cauchy} and \ref{sigma} (ii) that
\[
\mu_{k+1}(H)\le \mu_1(H-E(C,S))=\max\{\mu_1(C_5),\mu_1(G[S])\}<4.
\]
If there is a vertex $u\in S$ such that $d_S(u)=3$, then $k=4$ and $u\in \{u_1,u_2\}$. Assume that $u=u_1$. Denote by $u_3$ and $u_4$ the remaining two vertices in $S$. If both $u_3$ and $u_4$ have neighbors on $C$, then there would be a cycle with length less than $5$, a contradiction.
%
Thus,  one of $u_3$ and $u_4$, say $u_4$, has no neighbors on $C$. Let $E_3=E(C,S)\cup \{u_1u_4\}$. As $g\ge 5$, $S\setminus \{u_1\}$ is independent. Then $|E_3|\le 4$ and $H-E_3\cong C_5\cup K_{1,2}\cup K_1$. By Lemmas \ref{cauchy} and \ref{sigma} (ii),  \[
\mu_{k+1}(H)\le \mu_1(H-E_3)=\mu_1(C_5)<4.
\]

The proof is completed by combining Cases 1 and 2.
\end{proof}

\section{Proof of Theorem \ref{Thr}}

Now we turn to graphs with girth $3$ to prove Theorem \ref{Thr}.

\begin{proof}[Proof of Theorem \ref{Thr}]
From Lemma \ref{comp} and the fact that the
number of connected components of $\overline{G}$ is equal to $m_{\overline{G}}(0)$, we have
\begin{align*}
m_G(n)=n-3 &\Leftrightarrow\sigma_L(G)=\{n^{[n-3]}, a,b,0\} \mbox{ with some $a, b$ with $n>a\ge b>0$}\\
&\Leftrightarrow \sigma_L(\overline{G})=\{n-b, n-a, 0^{[n-2]}\}  \mbox{ with some $a, b$ with $n>a\ge b>0$}\\
&\Leftrightarrow
\overline{G}\cong (n-3)K_1\cup K_3, (n-3)K_1\cup P_3, (n-4)K_1\cup 2K_2\\
&\Leftrightarrow G\cong K_{n-3}\vee 3K_1, K_{n-3}\vee (K_1\cup K_2), K_{n-4}\vee 2K_2.
\end{align*}
The Item (i)  follows by noting  that $m_{K_n}(n)=n-1$ and $m_{K_n-e}(n)=n-2$.

In the following, we prove Item (ii).

It is trivial that $m_G[n-1,n]=n-1$ if and only if $G\cong K_n$. Suppose that $G\ncong K_n$. Let $d$ be the diameter of $G$. Then $2\le d\le n-2$.

\noindent
{\bf Case 1.} $d\ge 3$.

By Theorem 1.1 of \cite{XZ1},
if $d=3$, then $m_G[n-1,n]\le n-3$ with equality if and only if $G\cong  H(n), H(n,a)$ for some $a=1\dots,\lfloor\frac{n-4}{2}\rfloor$, and
if $d\ge 4$, then $m_G[n-1,n]\le m_G[n-d+2,n]\le n-4<n-3$.

\noindent
{\bf Case 2.} $d=2$.

By Theorem 5 of \cite{XZ},  $m_G[n-1,n]=n-2$ if and only if $G\cong K_n-\{vv_i:i=1,\dots,s\}$ for some $s=1,\dots,n-2$, where $v,v_1,\dots,v_s\in V(K_n)$.

Suppose that $G\not\cong K_n-\{vv_i:i=1,\dots,s\}$  for any $s=1,\dots,n-2$, where $v,v_1,\dots,v_s\in V(K_n)$. Then $m_G[n-1,n]\le n-3$.

\begin{Claim} \label{Fff}
If $G$ is not (isomorphic to) any of the graphs in (a), (b) or (c), then $\mu_{n-3}(G)<n-1$.
\end{Claim}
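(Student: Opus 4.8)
The plan is to pass to the complement and reduce everything to a structural classification. By Lemma~\ref{comp} we have $\mu_{n-3}(G)+\mu_3(\overline{G})=n$, so $\mu_{n-3}(G)<n-1$ is equivalent to $\mu_3(\overline{G})>1$. Writing $\Gamma=\overline{G}$, I would prove the contrapositive: if $\mu_3(\Gamma)\le 1$, then under the standing hypotheses of Case~2 (namely $G\not\cong K_n$ and $G$ is not the complement of a single star) the graph $G$ is one of the graphs in (a), (b) or (c). Thus the entire task is to determine every graph $\Gamma$ with at most two Laplacian eigenvalues exceeding $1$.

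I would begin with a component reduction. By Lemma~\ref{d1}, each connected component of $\Gamma$ on at least two vertices satisfies $\mu_1\ge\Delta+1\ge 2$, hence contributes at least one eigenvalue larger than $1$; therefore $\mu_3(\Gamma)\le 1$ forces $\Gamma$ to have at most two nontrivial components. If there are two, each must have $\mu_2\le 1$. Using the edge-deletion monotonicity of Lemma~\ref{cauchy}, such a component can contain no $2K_2$ (which, as a spanning subgraph obtained by deleting edges, would force $\mu_2\ge 2$), so its matching number is $1$ and it is a star or a triangle; the triangle is ruled out since $\mu_2(K_3)=3$. Hence in this case $\Gamma$ is a disjoint union of two stars together with isolated vertices, which translates to $G$ of type (b); the constraints $t\le s$ and $t\le n-2-s$ are exactly the ordering and the vertex-disjointness of the two stars.

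It remains to treat a single nontrivial component $\Gamma_0$ with $\mu_3(\Gamma_0)\le 1$. Again invoking Lemma~\ref{cauchy}, $\Gamma_0$ can contain no subgraph whose third largest Laplacian eigenvalue exceeds $1$, in particular no $C_4$ (with $\mu_3=2$), no $P_5$, and no bull graph (the last two having third largest Laplacian eigenvalue above $1$). Excluding $C_4$ and all longer cycles (which contain $P_5$) leaves the triangle as the only admissible cycle; excluding $P_5$ forces any tree here to have diameter at most $3$, i.e.\ to be a star or a double star. When a triangle is present, a pendant path of length $2$ would create a $P_5$, so all remaining vertices are neighbours of the triangle; the bull then confines them to one triangle vertex and $P_5$ keeps them pairwise nonadjacent. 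Hence $\Gamma_0$ is a star, a double star, or a triangle with pendant vertices all attached at one of its vertices. Translating back, the double star yields type (c) (the inequalities in (c) again being ordering and disjointness), the triangle-with-pendants yields type (a), and the star reproduces the already-excluded complement-of-a-single-star case, completing the contrapositive.

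For the converse direction required elsewhere in the theorem --- that the graphs in (a), (b), (c) actually attain $m_G[n-1,n]=n-3$ --- I would verify $\mu_3(\Gamma)\le 1$ for each family by a direct computation: a star has spectrum $\{s+1,1^{[s-1]},0\}$, and for the double star and for the triangle-with-pendants Lemma~\ref{chong} produces the eigenvalue $1$ with full pendant multiplicity, after which a constant-size quotient is checked to have exactly two eigenvalues exceeding $1$. The main obstacle is precisely the structural classification in the previous paragraph: the forbidden-subgraph bookkeeping must be exhaustive (borderline graphs such as the bull, the bowtie of two triangles sharing a vertex, and a triangle carrying a length-two pendant path all have to be eliminated), and the delicate point is confirming that the double star's third largest Laplacian eigenvalue is at most $1$, rather than merely smaller than its two dominant eigenvalues.
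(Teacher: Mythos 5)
Your proof is correct, but it follows a genuinely different route from the paper's. Both arguments pass to the complement via Lemma~\ref{comp}, reducing the claim to $\mu_3(\overline{G})>1$, and both rest on Lemma~\ref{cauchy} plus a few small spectral computations; however, the paper argues directly while you argue by classification. In the paper, since $G$ lies in none of (a), (b), (c) (and is not $K_n$ or $K_n$ minus a star), $\overline{G}$ contains a star together with two further edges $e,f$, and a case analysis on the position of $e,f$ relative to the star exhibits in each configuration a spanning subgraph of $\overline{G}$ (such as $K_{1,s}\cup 2K_2$, $(K_1\vee(K_2\cup(s-2)K_1))\cup K_2$, a graph containing $P_5$, $K_1\vee 2K_2$, or $K_1\vee K_{1,2}$) whose third largest Laplacian eigenvalue exceeds $1$. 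You instead prove the contrapositive by determining every graph $\Gamma$ with $\mu_3(\Gamma)\le 1$: at most two nontrivial components; if two, each has $\mu_2\le 1$, hence contains no $2K_2$ and must be a star (the triangle being excluded), which gives type (b); if one, forbidding $C_4$, $P_5$ and the bull forces a star, a double star, or a triangle with all pendant vertices at one vertex, i.e., the excluded complement-of-a-star case, type (c) and type (a). This buys a cleaner logical structure and, as a by-product, the structural half of Claim~\ref{FI}, and it avoids the paper's somewhat delicate selection of the two extra edges $e,f$ (where ``not in (a), (b), (c)'' is invoked to control their adjacency); the cost is that the forbidden-subgraph sweep must be exhaustive (a vertex joined to two triangle vertices gives $C_4$; the bowtie and a pendant path of length two attached to the triangle give $P_5$) and a few small spectra must be certified ($\mu_3(C_4)=2$, $\mu_3(P_5)>1$, $\mu_3$ of the bull exceeding $1$, $\mu_2(2K_2)=2$), all of which are routine checks of the same kind the paper performs. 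One remark: the verification you flag as delicate, that a double star has third largest Laplacian eigenvalue at most $1$, is not needed for this claim---only the converse Claim~\ref{FI} requires it; for your contrapositive it suffices that every graph with $\mu_3\le 1$ occurs in your list.
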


\begin{proof}
Suppose that $G$ is not a graph in (a), (b) and (c). Then we can view $G$ as a spanning subgraph of $K_n-\{vv_i:i=1,\dots,s\}-e-f$ for some $s=1,\dots,n-2$, where $e$ and $f$ are two edges different from any $vv_i$.  By Lemma \ref{comp}, it suffices to show that  $\mu_3(\overline{G})>1$.

If neither $e$ nor $f$ is incident to some $v_i$, then, as $G$ is not a graph in (a), (b) and (c), $e$ and $f$ are not adjacent, so $\overline{G}\cong K_{1,s}\cup 2K_2\cup (n-s-5)K_1$, and hence $\mu_3(\overline{G})=2>1$.

Suppose that exactly one of $e$ and $f$ is incident to some $v_i$, say $e$ is incident to $v_1$. If $e=v_1v_j$ for some $j=2,\dots,s$, then $\overline{G}\cong (K_1\vee (K_2\cup (s-2)K_1))\cup K_2\cup (n-s-3)K_1$, so  $\mu_3(\overline{G})=2>1$. If $e$ is not incident to any of $v_j$ with $j=2,\dots,s$, then $\overline{G}\cong K_{1,s}^+\cup K_2\cup (n-4-s)K_1$ if $e$ and $f$ are not adjacent, where $K_{1,s}^+$ is obtained by subdivision an edge of $K_{1,s}$, and $\overline{G}\cong K_{1,s}^{++}\cup (n-s-3)K_1$ otherwise, where $K_{1,s}^{++}$ is obtained  by adding an edge between a pendant vertex from $K_{1,s}$ and a vertex of  $K_2$.
By Lemma \ref{cauchy}, $\mu_3(\overline{G})\ge \min\{\mu_2(P_4),\mu_1(K_2)\}=2>1$ in the former case, and $\mu_3(\overline{G})\ge \mu_3(P_5)>1$ in the latter case.

Suppose that both $e$ and $f$ are incident to some of $v_1,\dots, v_s$.
Suppose first that both $e$ and $f$ are incident to two vertices of $v_1,\dots, v_s$, say $e=v_1v_2$ and $f=v_pv_q$ for some $p,q=1,\dots,s$. Then $\overline{G}\cong (K_1\vee (2K_2\cup (s-4)K_1))\cup (n-s-1)K_1$ if $\{p,q\}\cap \{1,2\}=\emptyset$ and $\overline{G}\cong (K_1\vee (K_{1,2}\cup (s-3)K_1))\cup (n-s-1)K_1$ otherwise, from which, by Lemma \ref{cauchy}, $\mu_3(\overline{G})\ge \mu_3(K_1\vee 2K_2)=2>1$ in the former case, and $\mu_3(\overline{G})\ge \mu_3(K_1\vee K_{1,2})=2>1$ in the latter case. Suppose next that at least one of $e$ and $f$ is incident to exactly one of $v_1,\dots, v_s$. Particularlly, if both $e$ and $f$ are incident to exactly one of  $v_1,\dots, v_s$, then as $G$ is not a graph in (a), (b) and (c), $e$ and $f$ are not adjacent. Then $\overline{G}$ contains $P_5$ as a subgraph, so it follows from Lemma \ref{cauchy} that $\mu_3(\overline{G})\ge \mu_3(P_5)>1$.
\end{proof}

\begin{Claim} \label{FI}
If $G$ is one of the graphs in (a), (b) or (c), then $m_G[n-1,n]=n-3$.
\end{Claim}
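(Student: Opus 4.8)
The goal of Claim~\ref{FI} is to show that each graph $G$ listed in (a), (b), (c) satisfies $m_G[n-1,n]=n-3$; combined with Claim~\ref{Fff}, this will pin down exactly the extremal graphs. Since we are in the regime $G\ncong K_n-\{vv_i\}$ (so the upper bound $m_G[n-1,n]\le n-3$ is already available from Case 2), it suffices to establish the reverse inequality $m_G[n-1,n]\ge n-3$, i.e.\ $\mu_{n-3}(G)\ge n-1$. The plan is to exploit the complement duality in Lemma~\ref{comp}: since $\mu_i(G)+\mu_{n-i}(\overline{G})=n$ for $i=1,\dots,n-1$, the condition $\mu_{n-3}(G)\ge n-1$ is equivalent to $\mu_{3}(\overline{G})\le 1$. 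So the whole claim reduces to a statement purely about the complements of the graphs in (a), (b), (c).

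The first step is therefore to compute $\overline{G}$ explicitly for each family. Each $G$ is obtained from $K_n$ by deleting a small, structured set of edges, so $\overline{G}$ is a sparse graph: in case (a), $\overline{G}=K_{1,s}\cup K_2\cup (n-s-3)K_1$ (the star from the $vv_i$ deletions, plus the single extra edge $v_1v_2$, plus isolated vertices); in case (b), $\overline{G}$ consists of two stars $K_{1,s}$ and $K_{1,t}$ (sharing no vertex, since $u\ne v$) together with isolated vertices; and in case (c), $\overline{G}=K_{1,s+t}\cup(n-s-t-1)K_1$, a single star (because the deleted edges $vv_i$ and $v_1u_j$ all meet at $v$ or $v_1$, and the ranges force $v_1$ to be one of the $v_i$). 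In every case $\overline{G}$ is a disjoint union of stars and isolated vertices. The key observation I would record is that a star $K_{1,m}$ has Laplacian spectrum $\{m+1,1^{[m-1]},0\}$, so it contributes exactly one eigenvalue strictly above $1$, namely its largest, while all its other nonzero eigenvalues equal $1$.

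The decisive step is then a counting argument on the spectrum of $\overline{G}$. Writing $\overline{G}$ as a disjoint union of connected pieces, the eigenvalues strictly greater than $1$ are precisely the ``large'' eigenvalue $m_j+1$ coming from each star $K_{1,m_j}$ with $m_j\ge 2$ (a $K_2$ contributes eigenvalue $2$, counted the same way, and isolated vertices contribute only $0$). Since the edge sets being deleted are small relative to $n$, in each of (a), (b), (c) there are at most two such ``large'' eigenvalues: for (a) the two eigenvalues are $s+1$ (from $K_{1,s}$) and $2$ (from $K_2$); for (b) they are $s+1$ and $t+1$; for (c) the single star yields only one eigenvalue exceeding $1$. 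Hence $\mu_2(\overline{G})$ may exceed $1$, but $\mu_3(\overline{G})\le 1$ in every case, which by the complement duality gives $\mu_{n-3}(G)\ge n-1$. Combined with $\mu_{n-2}(G)\ge\mu_{n-3}(G)\ge n-1$ one also confirms $m_G[n-1,n]\ge n-3$, and with the upper bound already in hand, equality $m_G[n-1,n]=n-3$ follows. I expect the main obstacle to be purely bookkeeping: verifying carefully that the constraints on $s$ and $t$ (e.g.\ $t\le\min\{s,n-2-s\}$ in (b), the overlap structure in (c)) really do force $\overline{G}$ into the claimed union-of-stars form and that no third eigenvalue above $1$ sneaks in through an unexpected adjacency, rather than any genuine spectral difficulty.
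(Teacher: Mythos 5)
Your overall strategy coincides with the paper's: use the duality of Lemma~\ref{comp} to reduce $m_G[n-1,n]\ge n-3$ (the upper bound being already available) to $\mu_3(\overline{G})\le 1$, and then analyse the complements of the graphs in (a), (b), (c). The gap lies in your identification of $\overline{G}$ in cases (a) and (c). In (a), the extra deleted edge $v_1v_2$ joins two of the vertices $v_1,\dots,v_s$, i.e.\ two leaves of the star formed by the deleted edges $vv_1,\dots,vv_s$; hence $\overline{G}\cong (K_1\vee(K_2\cup(s-2)K_1))\cup(n-s-1)K_1$ (a triangle with $s-2$ pendant edges at one vertex), not $K_{1,s}\cup K_2\cup(n-s-3)K_1$ (the latter is the complement of a graph of type (b) with $t=1$). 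The conclusion happens to survive, since the correct graph has Laplacian spectrum $\{s+1,3,1^{[s-2]},0^{[n-s]}\}$ and so still only two eigenvalues above $1$, but your justification as written concerns a different graph.

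In (c) the error is substantive. The deleted edges $vv_1,\dots,vv_s$ and $v_1u_1,\dots,v_1u_t$ do not share a common endpoint ($v_1\ne v$), so $\overline{G}$ is the double star $K_{1,s}\diamond K_{1,t}$ (adjacent centers $v$ and $v_1$, with $s-1$ and $t$ further leaves) plus $n-s-t-1$ isolated vertices, not the single star $K_{1,s+t}$; indeed, were $\overline{G}$ a star plus isolated vertices, $G$ would be of the form $K_n-\{vv_i:i=1,\dots,s\}$, which is excluded by hypothesis, so your identification is inconsistent with the case assumption. The double star is not a disjoint union of stars, so your counting principle (one eigenvalue exceeding $1$ per star component) does not apply, and the bound $\mu_3(\overline{G})\le 1$ is precisely the nontrivial point here: the paper obtains it from Faria's bound (Lemma~\ref{chong}), which gives $m_{\overline{G}}(1)\ge s+t-3$, together with $m_{\overline{G}}(0)=n-s-t$ and $\mu_{n-1}(\overline{G})<1$ via Lemma~\ref{min}, leaving room for at most two eigenvalues above $1$. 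Your proposal supplies no argument for this, so case (c) remains unproved. Case (b) is fine (granting that $u,u_1,\dots,u_t$ are distinct from $v,v_1,\dots,v_s$); also note the slip "$\mu_{n-2}(G)\ge\mu_{n-3}(G)$", which is reversed for nonincreasingly ordered eigenvalues, though it is not needed for the argument.
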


\begin{proof}
If $G$ is a graph in (a), i.e.,  $G\cong K_n-\{vv_i:i=1,\dots,s\}-v_1v_2$ for some $s=2,\dots,n-2$, where $v,v_1,\dots,v_s\in V(K_n)$, then $\overline{G}\cong (K_1\vee (K_2\cup (s-2)K_1))\cup (n-s-1)K_1$ and $\sigma_L(\overline{G})=\{s+1,3,1^{[s-2]},0^{[n-s]}\}$, so we have by Lemma \ref{comp} that $\sigma_L(G)=\{n^{[n-s-1]},n-1^{[s-2]},n-3,n-s-1,0\}$, implying  $m_G[n-1,n]=n-3$.

If $G$ is a graph in (b), i.e.,  $G\cong K_n-\{vv_i:i=1,\dots,s\}-\{uu_j:i=1,\dots,t\}$ for some $s=1,\dots,n-3$ and $t=1,\dots,\min\{s,n-2-s\}$, where $v,v_1,\dots,v_s,u,u_1,\dots,u_t\in V(K_n)$, then $\overline{G}\cong K_{1,s}\cup K_{1,t}\cup (n-2-s-t)K_1$ and $\sigma_L(\overline{G})=\{s+1,t+1,1^{[s+t-2]},0^{[n-s-t]}\}$, so by Lemma \ref{comp}, we have $\sigma_L(G)=\{n^{[n-s-t-1]},n-1^{[s+t-2]},n-s-1,n-t-1,0\}$, implying  $m_G[n-1,n]=n-3$.

Suppose that $G$ is a graph in (c), i.e.,  $G\cong K_n-\{vv_i:i=1,\dots,s\}-\{v_1u_j:j=1,\dots,t\}$ for some $s=2,\dots,n-2$ and $t=1,\dots,\min\{s-1,n-1-s\}$, where $v,v_1,\dots,v_s,u_1,\dots,u_t\in V(K_n)$. Then $\overline{G}\cong K_{1,s}\diamond K_{1,t}\cup (n-s-t-1)K_1$, where $K_{1,s}\diamond K_{1,t}$ is obtained from $K_{1,s}\cup K_{1,t}$ by identifying the vertex of degree $t$ of $K_{1,t}$ and a pendant vertex of $K_{1,s}$. Trivially, $m_{\overline{G}}(0)=n-s-t$. By Lemma \ref{chong}, $m_{\overline{G}}(1)\ge s+t-3$. By Lemma \ref{min}, $\mu_{n-1}(\overline{G})<1$. Thus $\mu_3(\overline{G})=1$. By Lemma \ref{comp}, $\mu_{n-3}(G)=n-1$, implying that $m_G[n-1,n]\ge n-3$, so   $m_G[n-1,n]\ge n-3$.
\end{proof}

By Claims \ref{Fff} and \ref{FI}, $m_G[n-1,n]=n-3$ if and only if $G$ is one of the graphs in (a), (b) or (c).

By combining Cases 1 and 2, the proof is completed.
\end{proof}

\section{Concluding Remarks}

In this paper, we studied the connections between girth and Laplacian eigenvalue distribution. From Theorems \ref{Gen} and \ref{Thr}, it is
a surprise that the girth $g$ has different roles in the Laplacian eigenvalue distribution when $g=3$ and when $g\ge 4$.

As $\mu_3(K_{3,4})=4$, $K_{3,4}$ attains the bound in Theorem \ref{Gen22} for  $k=3$.
For the graph $G_1$ in Fig. \ref{figg12}, 
since $\mu_3(G_1)=4$, it attains the bound in Theorem \ref{Gen22} for $k=3$.
For the graph $G_2$ in Fig. \ref{figg12}, since  
$\mu_4(G_2)=4$, it attains the bound in Theorem \ref{Gen22} for $k=4$. These examples demonstrate that the bound provided in Theorem \ref{Gen22} is sharp for $k=3,4$. It would be interesting to determine the graphs for which the bound in Theorem \ref{Gen22} is attained.

\begin{figure}[htbp]
\centering
\begin{tikzpicture}
\filldraw [black] (-1,0) circle (2pt);
\filldraw [black] (0,1) circle (2pt);
\filldraw [black] (0,-1) circle (2pt);
\filldraw [black] (1,1) circle (2pt);
\filldraw [black] (1,-1) circle (2pt);
\filldraw [black] (2,1) circle (2pt);
\filldraw [black] (2,-1) circle (2pt);
\filldraw [black] (2,0) circle (2pt);
\draw[black] (-1,0)--(0,1)--(1,1)--(2,1)--(2,0)--(-1,0)--(0,-1)--(1,-1)--(2,-1)--(2,0);
\draw[black] (1,1)--(1,-1);
\node at (0.5,-1.4) {\small $G_1$};
\filldraw [black] (6,-1) circle (2pt);
\filldraw [black] (6,1) circle (2pt);
\filldraw [black] (6,-0.3) circle (2pt);
\filldraw [black] (6,0.3) circle (2pt);
\filldraw [black] (5.5,-0.3) circle (2pt);
\filldraw [black] (5,-0.3) circle (2pt);
\filldraw [black] (6.5,-0.3) circle (2pt);
\filldraw [black] (7,-0.3) circle (2pt);
\filldraw [black] (5,0.3) circle (2pt);
\filldraw [black] (7,0.3) circle (2pt);
\draw[black] (6,1)--(7,0.3)--(7,-0.3)--(6,-1)--(5,-0.3)-- (5,0.3)--(6,1)--(6,-0.3)--(5,-0.3);
\draw[black] (7,-0.3)--(6,-0.3);
\node at (6,-1.4) {\small $G_2$};
\end{tikzpicture}
\caption{The graphs $G_1$ and $G_2$.}
\label{figg12}
\end{figure}
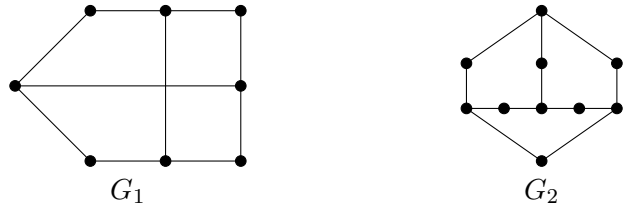

\bigskip

\noindent {\bf Declaration of competing interest}

There is no competing interest.

\end{document}